\documentclass[11pt,a4paper]{article}

\usepackage{titlesec}
\usepackage{fancyhdr}
\usepackage{a4wide}
\usepackage{graphicx}
\usepackage{float}
\usepackage{amssymb}
\usepackage{amsmath}
\usepackage{amsthm}
\usepackage{xcolor,colortbl}
\usepackage{mathrsfs}
\usepackage{array}
\usepackage{eucal}
\usepackage{tikz}
\usetikzlibrary{patterns}
\usepackage{multido}
\usepackage[T1]{fontenc}
\usepackage{hhline}
\usepackage{inputenc}
\usepackage[english]{babel}
\usetikzlibrary{shapes.misc}
\tikzset{cross/.style={cross out, draw=blue, minimum size=1*(#1-\pgflinewidth), inner sep=0pt, outer sep=0pt},
cross/.default={6pt}}
\usepackage{lmodern}
\usepackage{hyperref}
\usepackage{geometry}
\usepackage{changepage}
\geometry{hmargin=2.2cm, vmargin=2.2cm }
\changepage{0pt}{}{}{}{}{0pt}{}{0pt}{10pt}
\usepackage[numbers]{natbib}
\setlength{\bibsep}{0.0pt}

\usepackage{hyperref}
\hypersetup{
pdfpagemode=none,
pdftoolbar=true,        
pdfmenubar=true,        
pdffitwindow=false,     
pdfstartview={Fit},    
pdftitle={On well-posedness of time-harmonic problems in an unbounded strip for a thin plate model},    
pdfauthor={L. Bourgeois, L. Chesnel, S. Fliss},     
pdfsubject={},  
pdfcreator={L. Bourgeois, L. Chesnel, S. Fliss},   
pdfproducer={L. Bourgeois, L. Chesnel, S. Fliss}, 
pdfkeywords={}, 
pdfnewwindow=true,      
colorlinks=true,       
linkcolor=magenta,          
citecolor=red,        
filecolor=cyan,      
urlcolor=blue           
}

\newcommand{\dsp}{\displaystyle}

\newcommand{\eps}{\varepsilon}
\newcommand{\om}{\omega}
\newcommand{\Om}{\Omega}
\newcommand{\mrm}[1]{\mathrm{#1}}

\newcommand{\Cplx}{\mathbb{C}}
\newcommand{\N}{\mathbb{N}}
\newcommand{\R}{\mathbb{R}}

\newcommand{\PetitCarre}{\scalebox{0.5}{$\square$}}
\newcommand{\mL}{\mrm{L}}
\newcommand{\mH}{\mrm{H}}
\newcommand{\mV}{\mrm{V}}

\newcommand{\mW}{\mrm{W}}

\newcommand{\supp}{\mrm{supp}}
\renewcommand{\ker}{\mrm{ker}}
\newcommand{\coker}{\mrm{coker}}
\newcommand{\ind}{\mrm{ind}}
\renewcommand{\dim}{\mrm{dim}}

\definecolor{Gray}{gray}{0.90}
\newcommand {\be}{\begin{equation}}
\newcommand {\ee}{\end{equation}}

\newcommand {\la} {\lambda}

\newcommand {\deri}[2] {\frac {\partial #1}{\partial #2}}
\newcommand {\derd}[3] {\frac {\partial^{2} #1}{\partial #2 \partial
#3}}
\newcommand {\ders}[2] {\frac {\partial^{2} #1}{\partial {#2} ^2 }}
\newtheorem{theorem}{Theorem}[section]
\newtheorem{lemma}{Lemma}[section]
\newtheorem{remark}{Remark}[section]

\newtheorem{corollary}{Corollary}[section]
\newtheorem{proposition}{Proposition}[section]

\begin{document}

~\vspace{-0.3cm}
\begin{center}
{\sc \bf\LARGE On well-posedness of time-harmonic problems \\[6pt] in an unbounded strip for a thin plate model}
\end{center}

\begin{center}
\textsc{Laurent Bourgeois}$^1$, \textsc{Lucas Chesnel}$^{2}$, \textsc{Sonia Fliss}$^1$\\[16pt]
\begin{minipage}{0.95\textwidth}
{\small
$^1$ Laboratoire Poems, CNRS/ENSTA/INRIA, Ensta ParisTech, Universit\'e Paris-Saclay, 828, Boulevard des Mar\'echaux, 91762 Palaiseau, France; \\
$^2$ INRIA/Centre de math\'ematiques appliqu\'ees, \'Ecole Polytechnique, Universit\'e Paris-Saclay, Route de Saclay, 91128 Palaiseau, France.\\[10pt]
E-mails: \textit{Laurent.Bourgeois@ensta-paristech.fr},\, \textit{Lucas.Chesnel@inria.fr},\, \textit{Sonia.Fliss@ensta-paristech.fr}\\[-14pt]
\begin{center}
(\today)
\end{center}
}
\end{minipage}
\end{center}
\vspace{0.4cm}

\noindent\textbf{Abstract.} 
We study the propagation of elastic waves in the time-harmonic regime in a waveguide which is unbounded in one direction and bounded in the two other (transverse) directions. We assume that the waveguide is thin in one of these transverse directions, which leads us to consider a Kirchhoff-Love plate model in a locally perturbed 2D strip. For time harmonic scattering problems in unbounded domains, well-posedness does not hold in a classical setting and it is necessary to pre-
scribe the behaviour of the solution at infinity. This is challenging for the model that we consider and constitutes our main contribution. Two types of boundary conditions are considered: either the strip is simply supported or the strip is clamped. The two boundary conditions are treated with two different methods. For the simply supported problem, the analysis is based on a result of Hilbert basis in the transverse section. For the clamped problem, this property does not hold. Instead we adopt the Kondratiev's approach, based on the use of the Fourier transform in the unbounded direction, together with techniques of weighted Sobolev spaces with detached asymptotics. After introducing radiation conditions, the corresponding scattering problems are shown to be well-posed in the Fredholm sense. We also show that the solutions are the physical (outgoing) solutions in the sense of the limiting absorption principle.
~\\\\
\noindent\textbf{Key words.} Waveguide, Kirchhoff-Love model, thin plate, radiation conditions, modal decomposition.

\section{Introduction}

The Kirchhoff-Love model for thin elastic plates has now a quite long history and is of practical use in the field of 
mechanical engineering. From the mathematical and the numerical point of view, there is a considerable amount of contributions concerning the static case. In this field, we can for example refer to the monographs \cite{ciarlet,destuynder_salaun,chen_zhou,hsiao_wendland}. 
Many authors have also analyzed the behaviour of Kirchhoff-Love plates in the dynamic case, at least in the time domain.
Here, we can refer for example to \cite{lagnese_lions,becache_derveaux_joly}. In particular, the various models for plate problems in the time domain are derived and justified in \cite{lagnese_lions}.
However, the number of contributions concerning time-harmonic problems for infinite Kirchhoff-Love plates at non zero frequencies seems much smaller. From the theoretical point of view, the scattering solutions in the restricted case of purely radial inhomogeneities are analytically computed in \cite{norris_vermula}, while well-posedness in the presence of a potential is rigorously established in \cite{tyni_serov} for a large enough frequency.   
From the numerical point of view, some finite element computations with the help of Perfectly Matched Layers can be found in \cite{farhat_guenneau_enoch}. Let us also mention the studies concerning the so-called platonic crystals \cite{EvPo08,HMMM11,SmPW12,SmMM14,HJMM18} (by analogy with photonic, phononic or plasmonic crystals). In these works, the authors investigate the propagation of time harmonic waves 
in waveguides which consist of rigid pins embedded within an elastic Kirchhoff plate. \\ 
\newline
Our paper focuses on a two dimensional waveguide which is infinite in one direction and bounded in the perpendicular direction: it will be referred to as the strip in the following. We consider the Kirchhoff-Love model in such strip. We acknowledge that the Kirchhoff-Love model is the simplest possible one to describe plates --~see for instance \cite{lagnese_lions} where various models for plate problems are derived and justified. However, to the best of our knowledge, well-posedness of time-harmonic problems in a strip for such model has not been investigated up to now. This study can be considered as a first step for the analysis of richer plate models.
\\\\
The standard Helmholtz equation in a waveguide has been extensively studied (see for example \cite{lenoir-tounsi,harari,Hagstrom:1999,bonnet_dahi_luneville_pagneux,bourgeois_luneville}). Let us remind the reader of the main results for this simpler case. In the classical functional framework ($\mL^2$), existence of a solution may fail (the physical solution may propagate towards infinity without attenuation). If we extend the framework to only locally $\mL^2$ functions, in turn uniqueness may fail. To cope with this problem, one has additionally to prescribe the behaviour of the solution at infinity imposing so-called \emph{radiation conditions}. These radiation conditions are expressed thanks to a modal decomposition which is obtained by using the self-adjointness of the Laplace transverse operator, so that the corresponding eigenfunctions form a Hilbert basis. Some Dirichlet to Neumann operators, enclosing the radiation conditions, can then be introduced to reduce the problem to one set in a bounded domain. Finally, well-posedness in the Fredholm sense can be proved (see \cite{McLe00} for more details on the Fredholm theory). More precisely, if uniqueness holds (which arises except for a countable set of frequencies, which corresponds in part to the trapped modes, see for example \cite{davies_parnovski,maciver_linton_maciver_zhang_porter}) then existence holds as well. The solution is said to be physical if it satisfies the limiting absorption principle: it is the limit, in a certain sense, of the solutions to the Helmholtz equation in the presence of a damping term, when this damping term tends to zero.\\
\newline
 In the present paper, for the strip governed by the Kirchhoff-Love model, we introduce radiation conditions and prove that the corresponding scattering problem  is of Fredholm type, both in the case of a clamped strip and in the case of a simply supported strip. Let us mention that some analysis of modal solutions in a strip for various boundary conditions have already been conducted (see for example \cite{norris,hu_fang_long_huang}). But a rigorous existence and uniqueness analysis of the scattering problem, whatever the boundary conditions, seems not to exist. \\  
\newline
In our article, we propose two angles of attack, depending on the boundary condition.
In the case of the simply supported strip, we benefit from the factorization of the transverse underlying differential operator to decompose any scattering solution in terms of the modes of the waveguide. Then we prescribe the radiation conditions with the help of these modes and introduce Dirichlet-to-Neumann operators --~based on these radiation conditions~-- in order to reduce the analysis to the one of a problem set in a bounded domain. Such strategy also offers a method to compute the solution numerically. However this approach is not applicable to the case of a clamped strip, see Section \ref{sub:Preliminaries_clamped} for more details. For this problem, we shall obtain the result of modal decomposition needed to express the radiation conditions at infinity using a different approach due to Kondratiev \cite{Kond67} (see also \cite{MaPl77,NaPl94,KoMR97,KoMR01}). It consists in applying the Fourier transform in the unbounded direction. Then working in weighted Sobolev spaces and using the residue theorem, we shall get our decomposition. In a second step, in order to impose radiation conditions, we shall integrate it to the functional space in which we look for the solution. To proceed, we shall work with spaces with detached asymptotics introduced in \cite{NaPl91} (see also the reviews \cite{Naza99a,Naza99b}). Let us mention that the methodology we follow to study the clamped problem could be used also to deal with the simply supported problem. We would obtain completely similar results. The goal of the present paper is first to investigate problems of thin plates in unbounded strips, as mentioned above, but also to show that when the result of Hilbert basis in the transverse section is not available, we can still use an alternative route. We hope that the successive presentation of the two methods will help the reader to get familiar with the second approach which may be less known and which requires a slightly longer analysis. For application of the technique to other situations, one may consult \cite{NaTa11,BoCCSub,Naza13b,BoCh13,NaPo17}. In \cite{Naza82,NaTa17}, periodic problems are also considered.\\ 
\newline
The outline of the article is as follows. First, we describe the setting of our problems in Section \ref{SectionSetting}. Then in Section \ref{SectionModalExpo}, we compute the modal exponents both for the simply supported and clamped cases. The results of these computations are summarized in Proposition \ref{PropoCardSimply} and Proposition \ref{propoCard}. In Section \ref{SectionSimply}, we detail the analysis for the simply supported problem. Section \ref{SectionClamped} is dedicated to the study of the clamped problem. Note that Sections \ref{SectionSimply} and \ref{SectionClamped} can be read quite independently from Section \ref{SectionModalExpo}. Finally, we justify the selection of the outgoing modes in Section \ref{SectionOutgoing} before giving some short concluding remarks in Section \ref{Conclusion}. The main results of this article are Theorem \ref{thmIsomObstacleSimply} (Fredholmness in the simply supported case) and Theorem \ref{thmIsomObstacle} (Fredholmness in the clamped case).

\section{Setting of the problem}\label{SectionSetting}

\begin{figure}[!ht]
\centering
\begin{tikzpicture}
\draw[fill=gray!30,draw=none](-2.3,0) rectangle (2.3,2);
\draw (-2.3,0)--(2.3,0);
\draw (-2.3,2)--(2.3,2);
\draw [dashed](-3,0)--(-2.3,0);
\draw [dashed](3,0)--(2.3,0);
\draw [dashed](-3,2)--(-2.3,2);
\draw [dashed](3,2)--(2.3,2);
\node at (-1.7,0.2){\small$\Om$};
\end{tikzpicture}\qquad\qquad\begin{tikzpicture}
\draw[fill=gray!30,draw=none](-2.3,0) rectangle (2.3,2);
\draw (-2.3,0)--(2.3,0);
\draw (-2.3,2)--(2.3,2);
\draw [dashed](-3,0)--(-2.3,0);
\draw [dashed](3,0)--(2.3,0);
\draw [dashed](-3,2)--(-2.3,2);
\draw [dashed](3,2)--(2.3,2);
\begin{scope}[scale=0.7,yshift=0.4cm]
\draw [fill=white] plot [smooth cycle, tension=1] coordinates {(-0.6,0.9) (0,0.5) (0.7,1) (0.5,1.5) (-0.2,1.4)};
\end{scope}
\node at (0,1){\small$\mathscr{O}$};
\node at (-1.7,0.2){\small$D$};
\begin{scope}[shift={(3,1)}]
\draw[->] (0,0)--(0.5,0);
\draw[->] (0,0)--(0,0.5);
\node at (0.65,0){\small$x$};
\node at (0,0.65){\small$y$};
\end{scope}
\end{tikzpicture}
\caption{Domains $\Om$ (left) and $D$ (right). \label{PictureDomains}} 
\end{figure}
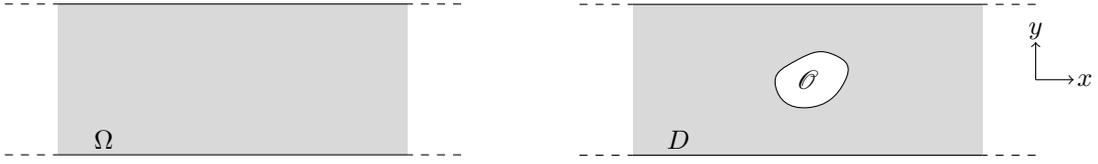
\noindent We consider a waveguide $\Omega=\{(x,y)\in\mathbb{R} \times (0;1)\}$, the boundary of which is denoted $\partial \Omega$.
Let $ \mathscr{O}\subset \Omega$ be a $\mathscr{C}^{1,1}$ domain such that $\overline{\mathscr{O}}\subset\Om$. We define $D:=\Omega \setminus \overline{\mathscr{O}}$ (see Figure \ref{PictureDomains}). We assume that the domain $D$ is occupied by a thin elastic plate described with the help of the Kirchhoff-Love model in the purely bending case. 
We will consider two kinds of boundary conditions on $\partial \Omega$:
the plate is either simply supported by $\partial \Omega$ or clamped on $\partial \Omega$, while $\mathscr{O}$ is a hole within it. In our analysis, we will study the following source term problem: find $u$ in $D$ such that
\be
\left\{
\begin{array}{rcll}
\Delta^2 u -k^4 u& = & f  & \mbox{ \rm{in} }D \\
u=Cu & = & 0 & \mbox{ \rm{on} }\partial \Omega \\
Mu=Nu & = & 0 & \mbox{ \rm{on} } \partial \mathscr{O} \\
\multicolumn{4}{c}{u \mbox{ \rm{satisfies} } \mrm{(RC)}.}
\end{array}\right.
\label{u_model}
\ee
Let us describe this system. From the physical point of view, the first equation of (\ref{u_model}) comes from the equation of the motion of the strip
\[D\Delta^2 u+\rho h \ders{u}{t}=p\]
in the time harmonic regime. Here, we have $D=Eh^3/12(1-\nu^2)$, where $E$ is the Young's modulus, $\nu \in [0;1)$ is the Poisson's ratio and $h$ is the thickness of the strip. Moreover, $\rho$ refers to the density per unit of volume and $p$ corresponds to the pressure applied to the strip. Hence the wavenumber $k$ is defined by $k^4=\rho h\omega^2/D$ and the volume source $f$ by $p/D$.
\\\\
 In the third equation of (\ref{u_model}), $M$ and $N$ are the boundary differential operators defined by
\begin{equation}\label{defOpBord}
Mu = \nu \Delta u + (1-\nu) M_0 u,\qquad\qquad Nu=-\deri{}{n}\Delta u -(1-\nu) \deri{}{s}N_0u,
\end{equation}
where $n=(n_x,n_y)$ is the outward unit normal to $\partial D$ and $s=(-n_y,n_x)$. Above, we use the notation
\[\deri{}{n} :=n_x \deri{}{x} + n_y \deri{}{y},\qquad\qquad \deri{}{s} :=-n_y \deri{}{x} + n_x\deri{}{y}.
\]
Moreover, in (\ref{defOpBord}), the operators $M_0$ and $N_0$ are respectively defined by
\[
M_0 u=\ders{u}{x}n_x^2 +2\derd{u}{x}{y}n_x n_y  + \ders{u}{y}n_y^2,\qquad N_0u=\derd{u}{x}{y}(n_x^2-n_y^2)-\bigg(\ders{u}{x}-\ders{u}{y} \bigg)n_x n_y.
\]
In order to interpret the boundary conditions in (\ref{u_model}), we recall that $D \times Mu$ is the bending moment while $D \times Nu$ is the transverse force. The boundary condition $Mu=0$ and $Nu=0$ on $\partial \mathscr{O}$ corresponds to a free obstacle, that is a hole.
\\\\
Concerning the boundary condition on $\partial \Omega$ (\emph{i.e.} the second equation of (\ref{u_model})), we shall consider the following two cases.
\begin{itemize}
\item[$i)$] When $C=M$, we have $u=0$ and $Mu=0$ on $\partial \Omega$. This corresponds to the simply supported strip.
\item[$ii)$] When $C=\partial_n$, we have $u=0$ and $\partial_n u=0$ on $\partial \Omega$. This corresponds to the clamped strip.
\end{itemize}
\noindent It should be noted that simplified expressions of $Mu$ and $Nu$ on straight parts of the boundary are
\be Mu=\ders{u}{n} + \nu \ders{u}{s},\qquad\qquad Nu=-\bigg(\frac{\partial^3 u}{\partial n^3} + (2-\nu) \frac{\partial^3 u}{\partial n\,\partial s^2} \bigg).\label{MN}\ee
This implies that in the case of a simply supported strip, the boundary condition can by simplified as $u=0$ and $\partial_{nn}u=0$ on $\partial \Omega$, or equivalently $u=0$ and $\Delta u=0$ on $\partial \Omega$.
\\\\ Finally, $\mrm{(RC)}$ stands for the radiation conditions which will be specified later on, for the simply supported and the clamped cases.
\\\\
The goal of the present article is to study the well-posedness of Problem (\ref{u_model}). For $k$ larger than a given threshold, in order to obtain well-posedness for (\ref{u_model}), we will have to impose radiation conditions to prescribe the behaviour of the solution at infinity. To proceed, we will show that every function satisfying the first two equations of (\ref{u_model}) decomposes on what we call the modes of the waveguide. These modes are computed in the next section, for the simply supported case and then the clamped case. Later on, they will be helpful to define the radiation conditions.
\section{Computation of modal exponents}\label{SectionModalExpo}
\noindent
The modes of the waveguide are defined as the functions of the form $u(x,y)=e^{\la x}\varphi(y)$, where $\lambda\in\Cplx$ and where $\varphi$ is a function to determine, which satisfy the equations $\Delta^2 u -k^4 u=0$ in $\Omega$ (the reference strip without the obstacle) and $u=Cu=0$ on $\partial \Omega$. In this section, we compute the modal exponents, that is the values of $\lambda\in\Cplx$ such that $u(x,y)=e^{\la x}\varphi(y)$ is a mode. The results of the computations are summarized in Proposition \ref{lambda} and Proposition \ref{PropositionSpectrumSymbol}. The reader who wishes to skip details can proceed directly to Sections \ref{SectionSimply} and \ref{SectionClamped}.\\
\newline
Setting $I:=(0;1)$, one finds that $u(x,y)=e^{\la x}\varphi(y)$ is a mode if and only if, the pair $(\lambda,\varphi)\in\Cplx \times \mH^2(I) \setminus \{0\}$ solves, depending on the problem considered, 
\be 
{i)}\quad\left\{
\begin{array}{rcll}
(\lambda^2+d_{yy})^2 \varphi-k^4\varphi & = & 0 & \mbox{ in } I\\
\varphi = d_{yy} \varphi& = & 0 & \mbox{ on } \partial I
\end{array}
\right.\qquad{ii)}\quad\left\{
\begin{array}{rcll}
(\lambda^2+d_{yy})^2 \varphi-k^4\varphi & = & 0 & \mbox{ in } I\\
\varphi = d_{y} \varphi& = & 0 & \mbox{ on } \partial I. 
\end{array}
\right.
\label{fortInitial}
\ee
The first problem is related to the simply supported plate while the second one is related to the clamped plate.
\\\\
Defining the Hilbert spaces $\mH_0^1(I):=\{\psi\in\mH^1(I)\,|\,\psi=0\mbox{ on }\partial I\}$ and $\mH_0^2(I):=\{\psi\in\mH^2(I)\,|\,\psi=d_y\psi=0\mbox{ on }\partial I\}$, the variational formulations of these two spectral problems write
\begin{enumerate}
\item[$i)$] Find $(\la,\varphi) \in  \mathbb{C} \times \mH_0^1(I) \cap \mH^2(I) \setminus \{0\}$ such that
\begin{equation}\label{SpectralVaria1}
\int_I (\lambda^2\varphi+d_{yy} \varphi)(\lambda^2\overline{\psi}+d_{yy} \overline{\psi})-k^4\varphi\overline{\psi}\,dy=0, \qquad \forall \psi \in \mH_0^1(I) \cap \mH^2(I).
\end{equation} 
\item[$ii)$] Find $(\la,\varphi) \in \mathbb{C} \times \mH_0^2(I) \setminus \{0\}$ such that
\begin{equation}\label{SpectralVaria2}
\int_I (\lambda^2\varphi+d_{yy} \varphi)(\lambda^2\overline{\psi}+d_{yy} \overline{\psi})-k^4\varphi\overline{\psi}\,dy=0, \qquad \forall \psi \in \mH_0^2(I).
\end{equation}
\end{enumerate}
Denoting $(\mH_0^1(I) \cap \mH^2(I))^\ast$ (resp. $\mH^{-2}(I)$) the topological dual space of $\mH_0^1(I) \cap \mH^2(I)$ (resp. $\mH_0^2(I)$), the underlying fourth-order differential operator $\mathscr{L}(\lambda)$ appearing in the analysis of (\ref{SpectralVaria1}), (\ref{SpectralVaria2}) is,  alternatively:
\[
{i)}\quad \begin{array}{rrcl}
\mathscr{L}(\lambda) : &\mH_0^1(I) \cap \mH^2(I) &\rightarrow &(\mH_0^1(I) \cap \mH^2(I))^\ast\\
& \varphi & \mapsto & \mathscr{L}(\lambda) \varphi=(\lambda^2+d_{yy})^2 \varphi-k^4\varphi
\end{array}
\] 
or
\begin{equation}\label{defOperator}
{ii)}\quad\qquad\qquad\begin{array}{rrcl}
\mathscr{L}(\lambda) : &\mH_0^2(I) &\rightarrow &\mH^{-2}(I)\\
& \varphi & \mapsto & \mathscr{L}(\lambda) \varphi=(\lambda^2+d_{yy})^2 \varphi-k^4\varphi.
\end{array}
\end{equation} 
For any of the two spectral problems, 
if $(\la,\varphi)$ is a solution then $\la$ is called an eigenvalue of the symbol $\mathscr{L}$ while $\varphi$ is called an eigenfunction of $\mathscr{L}$.
We denote $\Lambda$ the set of all eigenvalues of $\mathscr{L}$. This set will be referred to as the set of modal exponents.  Let us now solve these two spectral problems. We begin with the first one which, by using a factorization of the operator $\mathscr{L}(\lambda)$ and the very special nature of the boundary condition, is much simpler.
\\\\
In this article, the complex square root will be chosen so that for $z=\rho e^{i\nu}$, with $\rho\ge0$ and $\nu\in[0;2\pi)$, we have $\sqrt{z}=\sqrt{\rho}e^{i\nu/2}$. In particular, we always have $\Im m\,z\ge0$.
\subsection{Modal exponents in the simply supported case}\label{paragraphModesSimply}

In order to solve (\ref{fortInitial})-(\emph{i}), or equivalently (\ref{SpectralVaria1}), first we introduce the eigenvalues $\mu_n$ and eigenfunctions $\theta_n$ of the auxiliary spectral problem: find $(\mu,\theta) \in \mathbb{C} \times \mH^1(I) \setminus \{0\}$ such that
\be
\left\{
\begin{array}{rcll}
d_{yy}\theta+\mu\,\theta & = & 0  & \mbox{ \rm{in} }I\\
\theta & = & 0 & \mbox{ \rm{on} } \partial I. 
\end{array}\right.
\label{transverse}
\ee
A straightforward computation leads to $\mu_p=\pi^2 p^2$ and $\theta_p(y)=\sqrt{2}\sin(\pi p y)$ for $p \in \N^\ast:=\{1,2,\dots\}$.
Let us remark that the $\mu_p$ form a positive and increasing sequence of real numbers that tends to $+\infty$ while the family $(\theta_p)$ forms a complete orthonormal basis of $\mL^2(I)$. 
\begin{proposition}
\label{lambda}
Assume that $k>0$. Then the set of modal exponents $\Lambda$ for (\ref{fortInitial})-(i) is given by
\begin{equation}\label{etan_gamman}
\Lambda= \left\{\pm i\eta_p,\,p \in \mathbb{N}^\ast\right\} \cup \left\{\pm \gamma_p,\,p \in \mathbb{N}^\ast\right\}\ \mbox{ with }\ \eta_p:=\sqrt{k^2-\pi^2 p^2}\ \mbox{ and }\ \gamma_p:= \sqrt{k^2+\pi^2 p^2}.
\end{equation}
\end{proposition}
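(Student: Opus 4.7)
The approach is to exploit the fortunate observation that each $\theta_p(y) = \sqrt{2}\sin(\pi p y)$ in the $\mL^2(I)$-orthonormal basis introduced in (\ref{transverse}) automatically satisfies \emph{both} boundary conditions $\theta_p = d_{yy}\theta_p = 0$ at $y = 0, 1$ appearing in (\ref{fortInitial})-(i). Consequently, a separation-of-variables argument in this basis reduces the spectral problem to a countable collection of trivial scalar equations. Heuristically, this is the reflection at the Hilbert-basis level of the factorization
$$
(\lambda^2+d_{yy})^2-k^4 \;=\; (\lambda^2+d_{yy}-k^2)(\lambda^2+d_{yy}+k^2),
$$
which together with the compatibility of the boundary conditions makes the simply supported case genuinely easy (and is precisely what will fail for the clamped case).

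Concretely, I would expand any candidate eigenfunction $\varphi\in \mH^2(I)\cap \mH_0^1(I)$ as $\varphi = \sum_{p\geq 1} a_p\,\theta_p$ in $\mL^2(I)$, and test the weak formulation (\ref{SpectralVaria1}) against $\psi = \theta_p$. Since $-d_{yy}\theta_p = \mu_p\theta_p$ with $\mu_p = \pi^2 p^2$ and $\theta_p$ vanishes at both endpoints, two integrations by parts produce no boundary term and yield
$$
\bigl((\lambda^2-\mu_p)^2 - k^4\bigr)\,a_p \;=\; 0, \qquad p\ge 1.
$$
A nontrivial $\varphi$ thus exists if and only if there is some $p\ge 1$ with $(\lambda^2-\mu_p)^2 = k^4$, that is, $\lambda^2 = \mu_p\pm k^2 = \pi^2 p^2 \pm k^2$. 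Conversely, for every such $\lambda$ the function $\varphi = \theta_p$ is a genuine eigenfunction, so each value produced this way really belongs to $\Lambda$.

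The last step is to extract $\lambda$ from $\lambda^2$ under the convention $\Im m\,\sqrt{z}\ge 0$: the case $\lambda^2 = \pi^2 p^2 + k^2 = \gamma_p^2$ gives $\lambda = \pm\gamma_p$, and the case $\lambda^2 = \pi^2 p^2 - k^2 = -\eta_p^2$ gives $\lambda = \pm i\eta_p$, which is exactly the list in (\ref{etan_gamman}). I do not anticipate any serious obstacle here; the only minor technical point is the justification of the termwise identification of Fourier coefficients, which follows directly from testing the weak form against each basis vector $\theta_p\in \mH_0^1(I)\cap \mH^2(I)$ without ever differentiating the series. The real difficulty of the paper lies elsewhere, namely in the clamped problem, where the transverse basis does not split the operator and Kondratiev's machinery must be deployed.
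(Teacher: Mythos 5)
Your proof is correct, and it takes a mildly but genuinely different route from the paper's. The paper argues at the level of the strong form and uses the factorization explicitly: it defines $\tilde{\varphi}:=(\lambda^2+d_{yy})\varphi-k^2\varphi$ and $\check{\varphi}:=(\lambda^2+d_{yy})\varphi+k^2\varphi$, invokes the boundary condition $d_{yy}\varphi=0$ on $\partial I$ to show that both $\tilde{\varphi}$ and $\check{\varphi}$ solve second-order Dirichlet problems on $I$, and then reads off $\lambda^2\in\{\mu_p\pm k^2\}$ from the known Dirichlet eigenpairs $(\mu_p,\theta_p)$. Your proof instead stays at the level of the weak formulation (\ref{SpectralVaria1}): you expand $\varphi=\sum_p a_p\theta_p$ in the $\mL^2(I)$ basis, test against $\psi=\theta_p$, integrate by parts twice (no boundary terms arise since $\varphi$ and $\theta_p$ both vanish on $\partial I$), and obtain the diagonalized scalar equation $\bigl((\lambda^2-\mu_p)^2-k^4\bigr)a_p=0$. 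Both arguments hinge on the same structural fact — that the $\theta_p$ satisfy both simply supported boundary conditions and simultaneously diagonalize $d_{yy}$ — but yours never needs to invoke the natural boundary condition $d_{yy}\varphi=0$ explicitly, since it is automatically encoded in the weak formulation; the paper's requires it. One small point worth noting: the paper's factorization device reappears verbatim in Section \ref{paragraphDtN} to derive the full modal decomposition (\ref{general_form}), which is presumably why the authors present the proof that way, whereas your spectral-expansion version is closer in spirit to the operator-pencil/Fourier-in-$x$ viewpoint later used for the clamped case.
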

\begin{proof}
Let us consider some solution $(\la,\varphi)$ to the spectral problem 
(\ref{fortInitial})-(\emph{i}) and let us define
\[\tilde{\varphi}:=(\la^2+d_{yy})\varphi-k^2 \varphi,\qquad\qquad \check{\varphi}:=(\la^2+d_{yy})\varphi+k^2 \varphi.\]
Using that $d_{yy}\varphi=0$ on $\partial I$, we observe that $\tilde{\varphi}$ and $\check{\varphi}$ satisfy the following problems:
\[\left\{
\begin{array}{rcll}
(\la^2+d_{yy})\tilde{\varphi} + k^2 \tilde{\varphi} & = & 0 & \mbox{ in } I\\
\tilde{\varphi} & = & 0 & \mbox{ on } \partial I 
\end{array}
\right.\qquad {\rm and}\qquad
\left\{
\begin{array}{rcll}
(\la^2+d_{yy})\check{\varphi} - k^2 \check{\varphi} & = & 0 & \mbox{ in } I\\
\check{\varphi} & = & 0 & \mbox{ on } \partial I .
\end{array}
\right.\]
Introducing the solutions $(\mu_p,\theta_p)$ to Problem (\ref{transverse}), 
we only have two possibilities: either $\la^2=\mu_p-k^2$, $\tilde{\varphi}=\tilde{C} \theta_p$, $\check{\varphi}=0$ for some $p \in \mathbb{N}^\ast$, $\tilde{C}\in\Cplx$, or $\la^2=\mu_p+k^2$, $\check{\varphi}=\check{C}\theta_p$, $\tilde{\varphi}=0$ for some $p \in \mathbb{N}^\ast$, $\check{C}\in\Cplx$.
Conversely, for any value $\la$ such that either $\la^2=\mu_p-k^2$ or $\la^2=\mu_p+k^2$, by choosing $\varphi=\theta_p$, one finds that $(\la,\varphi)$ is an eigenpair of (\ref{fortInitial})-(\emph{i}).
\end{proof}
\noindent Now let us focus our attention on the set $\Lambda \cap \mathbb{R}i$. The reason is that if $\la \in \Lambda \cap \mathbb{R}i \setminus \{0\}$ and $\varphi$ corresponds to a non zero element of $\ker\,\mathscr{L}(\lambda)$, then the so-called mode $(x,y) \mapsto e^{\la x}\varphi(y)$ is propagating.  Such modes play a particular role in the definition of the radiation conditions and the well-posedness of the initial problem. 
Let us first remark from Proposition \ref{lambda} that $0 \in \Lambda$ if and only if there exists $n \in \mathbb{N}^\ast$ such that $k= n\pi$.
These particular values are the so-called threshold wavenumbers.
We have the following proposition, the proof of which is straightforward. We denote $\left\lfloor \cdot \right\rfloor$ the floor function. 
\begin{proposition}\label{PropoCardSimply}
For $k\in(0;\pi)$, we have $\Lambda\cap\R i=\emptyset$. For $k\ge\pi$, we have 
\[\Lambda \cap \mathbb{R}i=\left\{\pm i \eta_p,\quad p=1,\cdots,
\left\lfloor k/\pi \right\rfloor\right\}.
\]
This implies $\mrm{card}\,(\Lambda\cap\R i)=2n$ when $k\in(n\pi;(n+1)\pi)$, $n\in\N^{\ast}$ and $\mrm{card}\,(\Lambda\cap\R i)=2n-1$ when $k=n\pi$, $n\in\N^{\ast}$.
\end{proposition}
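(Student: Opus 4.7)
The plan is to start directly from Proposition \ref{lambda}, which gives the explicit description $\Lambda=\{\pm i\eta_p,\,p\in\N^\ast\}\cup\{\pm\gamma_p,\,p\in\N^\ast\}$, and simply determine which of these explicit exponents land in $\R i$. I would proceed by handling the two families separately and then carrying out the final count, paying special attention to the square-root convention fixed just before Section \ref{paragraphModesSimply}.

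First I would dispose of the $\pm\gamma_p$ family. Since $\gamma_p=\sqrt{k^2+\pi^2p^2}$ has the strictly positive real argument $k^2+\pi^2p^2>0$, the chosen branch yields $\gamma_p\in(0,+\infty)$, so $\pm\gamma_p\in\R\setminus\{0\}$ and none of these contribute to $\Lambda\cap\R i$. Next I would treat $\pm i\eta_p$, where $\eta_p=\sqrt{k^2-\pi^2p^2}$. If $k^2-\pi^2p^2\ge 0$ (i.e.\ $p\le k/\pi$), then $\eta_p\in[0,+\infty)$ and $\pm i\eta_p\in\R i$. If $k^2-\pi^2p^2<0$, then under the branch convention $\sqrt{\rho e^{i\pi}}=\sqrt{\rho}\,e^{i\pi/2}$, we have $\eta_p=i\sqrt{\pi^2p^2-k^2}\in i\R^\ast_+$, hence $\pm i\eta_p=\mp\sqrt{\pi^2p^2-k^2}\in\R^\ast$ and these values do \emph{not} lie in $\R i$. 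Combining these two observations gives exactly
\[
\Lambda\cap\R i=\{\pm i\eta_p,\ p\in\N^\ast,\ p\le k/\pi\},
\]
which is empty when $k<\pi$ and equals $\{\pm i\eta_p,\ p=1,\dots,\lfloor k/\pi\rfloor\}$ when $k\ge\pi$.

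Finally, for the cardinality, I would distinguish the two cases according to whether $k/\pi$ is an integer. For $k\in(n\pi,(n+1)\pi)$ with $n\in\N^\ast$, we have $\lfloor k/\pi\rfloor=n$ and $\eta_p=\sqrt{k^2-\pi^2p^2}>0$ for every $p\in\{1,\dots,n\}$, so the $n$ pairs $\{i\eta_p,-i\eta_p\}$ are all made of two distinct elements, giving $\mrm{card}(\Lambda\cap\R i)=2n$. For $k=n\pi$, $\lfloor k/\pi\rfloor=n$ again, but this time $\eta_n=0$, so the pair $\{+i\eta_n,-i\eta_n\}$ collapses to the single element $\{0\}$, while the first $n-1$ pairs contribute $2(n-1)$ distinct elements; in total $2n-1$. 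This yields the announced count.

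No step looks technically delicate: the only point worth being careful about is consistently invoking the branch of the square root fixed in the paper, since for $p>k/\pi$ the naive reading of $\pm i\eta_p$ might mistakenly be thought to give purely imaginary numbers; making the branch explicit avoids this pitfall and is essentially the whole content of the argument.
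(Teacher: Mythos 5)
Your argument is correct and is exactly the ``straightforward'' verification the paper has in mind: the paper omits the proof, relying on Proposition \ref{lambda} and the fixed square-root branch, which is precisely what you spell out (including the collapse of the pair $\pm i\eta_n$ to $\{0\}$ at the thresholds $k=n\pi$).
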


\subsection{Modal exponents in the clamped case}

In this paragraph, we solve (\ref{fortInitial})-(\emph{ii}), or equivalently (\ref{SpectralVaria2}). We assume that $k>0$ is given. We remark that for $\lambda\in\Cplx$ such that $\lambda^4\ne k^4$, the linearly independent functions $a_1$, $a_2$ such that 
\begin{equation}\label{defBase1}
a_1(y)=\cfrac{\sin(\sqrt{\lambda^2+ k^2}y)}{\sqrt{\lambda^2+k^2}}-\cfrac{\sin(\sqrt{\lambda^2-k^2}y)}{\sqrt{\lambda^2-k^2}},\qquad a_2(y)=\cos(\sqrt{\lambda^2+ k^2}y)-\cos(\sqrt{\lambda^2-k^2}y)
\end{equation}
satisfy the first equation of (\ref{fortInitial})-(\emph{ii}) as well as the boundary conditions $\varphi(0)=d_y\varphi(0)=0$. On the other hand, for $\lambda\in\Cplx$ such that $\lambda^4=k^4$, the linearly independent functions $b_1$, $b_2$ defined by
\begin{equation}\label{defBase2}
b_1(y)=\cfrac{\sin(\sqrt{2}\lambda y)}{\sqrt{2}\lambda }-y,\qquad b_2(y)=\cos(\sqrt{2}\lambda y)-1.
\end{equation}
are solutions of the first equation of (\ref{fortInitial})-(\emph{ii}) satisfying $\varphi(0)=d_y\varphi(0)=0$. In the analysis below, we will meet the following two sets
\begin{eqnarray}
\label{defSetK}\mathscr{K}&\hspace{-0.2cm}:=&\hspace{-0.2cm}\Big\{\cfrac{\pi}{\sqrt{2}}\,\sqrt{m^2-n^2},\,\mbox{ with $m,\,n\in\N^{\ast}$, $m>n$, such that $m-n$ is even} \Big\}\\[2pt]
\label{defSetLambdaPart}
\Lambda_{\mrm{part}}&\hspace{-0.2cm}:=&\hspace{-0.2cm}\Big\{\cfrac{\pi}{\sqrt{2}}\,\sqrt{m^2+n^2},\,\mbox{ with $m,\,n\in\N^{\ast}$, $m>n$, such that $m-n$ is even} \Big\}.
\end{eqnarray}
In the proposition below, we give a characterization of the set of modal exponents $\Lambda$ for the clamped problem. We remind the reader that the geometric multiplicity of an eigenvalue $\lambda$ of $\mathscr{L}$ is by definition equal to $\dim\,\ker\,\mathscr{L}(\lambda)$.

\begin{proposition}\label{PropositionSpectrumSymbol}
Assume that $k>0$. 
Let $\Lambda$ refer here to the set of modal exponents for (\ref{fortInitial})-(ii).\\[3pt]
1) The number $\lambda\in\Cplx$ such that $\lambda^4\ne k^4$ belongs to $\Lambda$ if and only if $\lambda$ satisfies
\begin{equation}\label{dispersion1}
\hspace{-0.2cm}\left(\sqrt{\cfrac{\lambda^2+ k^2}{\lambda^2-k^2}}+\sqrt{\cfrac{\lambda^2- k^2}{\lambda^2+k^2}}\right)\sin(\sqrt{\lambda^2+ k^2})\sin(\sqrt{\lambda^2-k^2})=2-2\cos(\sqrt{\lambda^2+ k^2})\cos(\sqrt{\lambda^2-k^2}).\hspace{-0.1cm}
\end{equation}
Moreover, if $k\notin \mathscr{K}$ (see definition (\ref{defSetK}) above), then for all $\lambda\in\Lambda$, we have $\ker\,\mathscr{L}(\lambda)=\mrm{span}(\varphi_0)$ (geometric multiplicity equal to one) with $\varphi_0(y)=A_1\,a_1(y)+A_2\,a_2(y)$. Here $(A_1,A_2)^{\top}$ is an eigenvector of the matrix $\mathbb{A}(\lambda)$ defined in (\ref{defMatSystem}).\\[5pt]
If $k\in \mathscr{K}$, then for $\lambda\in\Lambda_{\mrm{part}}\cap\Lambda$ (see (\ref{defSetLambdaPart})), we have $\ker\,\mathscr{L}(\lambda)=\mrm{span}(a_1,\,a_2)$ (geometric multiplicity equal to two). For $\lambda\in\Lambda\setminus\Lambda_{\mrm{part}}$, we have $\ker\,\mathscr{L}(\lambda)=\mrm{span}(\varphi_0)$ (geometric multiplicity equal to one) with $\varphi_0(y)=A_1\,a_1(y)+A_2\,a_2(y)$ (again here $(A_1,A_2)^{\top}$ is an eigenvector of the matrix $\mathbb{A}(\lambda)$).\\
\newline
2) The number $\lambda\in\Cplx$ such that $\lambda^4= k^4$ belongs to $\Lambda$ if and only if $\lambda$ satisfies
\begin{equation}\label{dispersion2}
\sqrt{2}\lambda\sin(\sqrt{2}\lambda)=2-2\cos(\sqrt{2}\lambda).
\end{equation}
In that case, we have $\ker\,\mathscr{L}(\lambda)=\mrm{span}(\varphi_0)$ (geometric multiplicity equal to one) with $\varphi_0(y)=B_1\,b_1(y)+B_2\,b_2(y)$.  Here $(B_1,B_2)^{\top}$ is an eigenvector of the matrix $\mathbb{B}(\lambda)$ defined in (\ref{defMatSystem2}).
\end{proposition}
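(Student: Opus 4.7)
The strategy is the standard one for a fourth‑order ODE on a bounded interval: build the two‑parameter family of solutions to the ODE that already satisfy the two boundary conditions at $y=0$, then impose the remaining two conditions at $y=1$ to obtain a $2\times 2$ algebraic system whose singular locus describes the modal exponents. Factorize $(\lambda^2+d_{yy})^2-k^4=(\lambda^2+d_{yy}-k^2)(\lambda^2+d_{yy}+k^2)$. For $\lambda^4\ne k^4$ both second‑order factors are non‑degenerate, so the four‑dimensional solution space of the ODE is spanned by $\cos(\sqrt{\lambda^2\pm k^2}\,y)$ and $\sin(\sqrt{\lambda^2\pm k^2}\,y)$. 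Imposing $\varphi(0)=d_y\varphi(0)=0$ cuts this to a two‑dimensional subspace, and a direct inspection of derivatives at $0$ shows that $\{a_1,a_2\}$ from (\ref{defBase1}) is a basis of it (for instance $a_1$ has vanishing second derivative but non‑zero third derivative at $0$, and symmetrically for $a_2$).

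Writing $\varphi=A_1 a_1+A_2 a_2$ and imposing $\varphi(1)=d_y\varphi(1)=0$ then produces a homogeneous system $\mathbb{A}(\lambda)(A_1,A_2)^{\top}=0$ with coefficients $a_i(1)$, $d_y a_i(1)$; a non‑trivial eigenfunction exists iff $\det\mathbb{A}(\lambda)=0$, and any $(A_1,A_2)^{\top}\in\ker\mathbb{A}(\lambda)$ gives $\varphi_0=A_1 a_1+A_2 a_2$. Using the identity $d_y a_1=a_2$ and Pythagoras, the determinant collapses after expansion into the left‑hand side minus the right‑hand side of (\ref{dispersion1}), proving the dispersion relation. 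Part 2 is treated identically: for $\lambda^4=k^4$ the factorization contains a copy of $d_{yy}$ whose kernel $\mrm{span}(1,y)$ must be counted among the solutions, and $\{b_1,b_2\}$ from (\ref{defBase2}) takes over the role of $\{a_1,a_2\}$; the associated matrix is $\mathbb{B}(\lambda)$, whose determinant simplifies to the difference of the two sides of (\ref{dispersion2}).

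For the geometric multiplicity, one distinguishes generic vanishing of the determinant (yielding $\dim\ker=1$) from the critical case $\mathbb{A}(\lambda)=0$ (yielding $\dim\ker=2$). Setting $\alpha:=\sqrt{\lambda^2-k^2}$, $\beta:=\sqrt{\lambda^2+k^2}$, the conditions $a_2(1)=0$ and $d_y a_2(1)=0$ read $\cos\alpha=\cos\beta$ and $\beta\sin\beta=\alpha\sin\alpha$; combined with $a_1(1)=0$ they force $\sin\alpha=\sin\beta=0$, i.e.\ $\alpha=n\pi$, $\beta=m\pi$ for positive integers with $m>n$ and $m-n$ even (the sign restrictions come from the square‑root convention together with $\beta^2-\alpha^2=2k^2>0$). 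This is equivalent to $k=\pi\sqrt{m^2-n^2}/\sqrt{2}\in\mathscr{K}$ and $\lambda=\pm\pi\sqrt{m^2+n^2}/\sqrt{2}\in\pm\Lambda_{\mrm{part}}$, producing the announced dichotomy. In part 2, $\mathbb{B}(\lambda)=0$ would demand both $\cos(\sqrt{2}\lambda)=1$ (hence $\sin(\sqrt{2}\lambda)=0$) and $\sin(\sqrt{2}\lambda)/(\sqrt{2}\lambda)=1$, which are incompatible for $\lambda\ne0$; the multiplicity is therefore always one.

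The only genuinely delicate part is this last one: the integer classification of the critical case, where the interplay between the two branches $\sqrt{\lambda^2\pm k^2}$ and the fixed square‑root convention has to be tracked carefully. The remainder of the proof amounts to routine, if slightly tedious, trigonometric bookkeeping.
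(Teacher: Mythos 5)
Your proposal is correct and follows essentially the same route as the paper: reduce to the basis $\{a_1,a_2\}$ (resp. $\{b_1,b_2\}$) of solutions satisfying the conditions at $y=0$, read the conditions at $y=1$ as the $2\times2$ system $\mathbb{A}(\lambda)$ (resp. $\mathbb{B}(\lambda)$), identify $\det\mathbb{A}(\lambda)=0$ (resp. $\det\mathbb{B}(\lambda)=0$) with the dispersion relation, and classify the degenerate case $\mathbb{A}(\lambda)=0$ via $\sin\sqrt{\lambda^2\pm k^2}=0$ to obtain $\mathscr{K}$ and $\Lambda_{\mrm{part}}$. You even supply details the paper leaves implicit (the identity $d_ya_1=a_2$ behind the determinant computation and the explicit reason why $\mathbb{B}(\lambda)\ne0$), so nothing is missing.
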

\begin{proof}
1) First we study the eigenvalues $\lambda\in\Cplx$ of $\mathscr{L}$ such that $\lambda^4\ne k^4$. As 
\[
	\text{dim}\;\Big\{\varphi,\; (\lambda^2+d_{yy})^2\varphi-k^4\varphi=0,\;\varphi(0)=\varphi'(0)=0\Big\}=2,
\] 
if $\varphi$ satisfies $\mathscr{L}(\lambda)\varphi=0$ then there are constants $A_1$, $A_2\in\Cplx$ such that $\varphi(y)=A_1\,a_1(y)+A_2\,a_2(y)$ where $a_1$, $a_2$ are defined in (\ref{defBase1}). Writing the two boundary conditions at $y=1$, we obtain that $\varphi$ is a non-zero function satisfying $\mathscr{L}(\lambda)\varphi=0$ if and only if the matrix 
\begin{equation}\label{defMatSystem}
\mathbb{A}(\lambda):=\left(\begin{array}{cc}
a_1(1) & a_2(1)\\[4pt]
a'_1(1) & a'_2(1)
\end{array}\right)
\end{equation}
has a non trivial kernel. An explicit computation shows that $\det\mathbb{A}(\lambda)=0$ if and only if (\ref{dispersion1}) holds. Moreover, one sees that the geometric multiplicity of $\lambda$ coincides with $\dim\,\ker\,\mathbb{A}(\lambda)$. Clearly, if $\lambda\in\Lambda$, then $\dim\,\ker\,\mathbb{A}(\lambda)=1$ except if $\mathbb{A}(\lambda)=0$ (in this case $\dim\,\ker\,\mathbb{A}(\lambda)=2$). Assume that $\mathbb{A}(\lambda)=0$. Then in particular, we must have $a_1(1)=a'_2(1)=0$. Using expressions (\ref{defBase1}), this implies $\sin(\sqrt{\lambda^2+k^2})=\sin(\sqrt{\lambda^2-k^2})=0$ leading to $k=\pi\sqrt{m^2-n^2}/\sqrt{2}$ and $\lambda=\pi\sqrt{m^2+n^2}/\sqrt{2}$, where $m$, $n\in\N^{\ast}$ are such that $m>n$. The additional constrain $a_2(1)=a'_1(1)=0$ imposes that $m$, $n$ must have same parity. This leads to the definition of the set $\mathscr{K}$ in (\ref{defSetK}) and to the statement of the proposition.\\[5pt]
2) Then we study the eigenvalues $\lambda\in\Cplx$ of $\mathscr{L}$ such that $\lambda^4=k^4$. If $\varphi$ satisfies $\mathscr{L}(\lambda)\varphi=0$ then there are constants $B_1$, $B_2\in\Cplx$ such that $\varphi(y)=B_1\,b_1(y)+B_2\,b_2(y)$ where $b_1$, $b_2$ are defined in (\ref{defBase2}). Writing the two boundary conditions at $y=1$, we obtain that $\varphi$ is a non-zero function satisfying $\mathscr{L}(\lambda)\varphi=0$ if and only if the matrix 
\begin{equation}\label{defMatSystem2}
\mathbb{B}(\lambda):=\left(\begin{array}{cc}
b_1(1) & b_2(1)\\[4pt]
b'_1(1) & b'_2(1)
\end{array}\right)
\end{equation}
has a non trivial kernel. An explicit computation shows that $\det\mathbb{B}(\lambda)=0$ if and only (\ref{dispersion2}) holds. Moreover, one can check that one has always $\mathbb{B}(\lambda)\ne0$. As a consequence, if $\lambda$ is an eigenvalue of $\mathscr{L}$ such that $\lambda^4=k^4$, then $\dim\,\ker\,\mathscr{L}(\lambda)=1$. 
\end{proof}
\noindent In the remaining part of the paragraph, we focus our attention on the set $\Lambda\cap\R i$, in other words on the propagating modes. \\
From (\ref{SpectralVaria2}), we remark that if $\lambda$ belongs to $\Lambda$, then $-\lambda$ is also an element of $\Lambda$. Therefore, it is sufficient to study $\Lambda\cap[0;+i\infty)$. In the proof of Lemma \ref{resultPrelim} below, we will see that $\Lambda\cap[ik;+i\infty)=\emptyset$. As a consequence, we can look for $\lambda\in\Lambda$ writing as $\lambda=i\tau k$ with $\tau\in[0;1)$. From (\ref{dispersion1}), we see that we must have $h_{k}(\tau)=0$ with 
\begin{equation}\label{dispersion3}
\begin{array}{l}
h_{k}(\tau)=\left(\sqrt{\cfrac{1-\tau^2}{1+\tau^2}}-\sqrt{\cfrac{1+\tau^2}{1-\tau^2}}\right)\sin(k\sqrt{1-\tau^2})\sinh(k\sqrt{1+\tau^2})\\[18pt]
\hspace{2cm}-(2-2\cos(k\sqrt{1-\tau^2})\cosh(k\sqrt{1+\tau^2})).
\end{array}
\end{equation}
In Corollary \ref{coroInvertibility}, we show that such dispersion relation is satisfied only by a finite number of $\tau\in[0,1)$.
From Proposition \ref{PropositionSpectrumSymbol}, we know that if $\lambda$ belongs to $\Lambda\cap\R i$, then its geometric multiplicity is equal to one. In the following, we will also need to know the algebraic multiplicity of $\lambda$ (see the definition e.g. in \cite[\S5.1.1]{KoMR97}). 
\begin{proposition}\label{PropositionAlgMult}
Assume that $k>0$ is given. If $\lambda\in\Lambda\cap \R i\setminus\{0\}$, then its algebraic multiplicity is equal to one. If $\lambda=0\in\Lambda$ then its algebraic multiplicity is equal to two. 
\end{proposition}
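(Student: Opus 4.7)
The plan is to invoke the Jordan-chain definition of algebraic multiplicity for a holomorphic operator pencil (see \cite[\S5.1.1]{KoMR97}): this multiplicity equals the length of a maximal Jordan chain starting from an eigenvector $\varphi_0\in\ker\mathscr{L}(\lambda_0)$, since the geometric multiplicity is one by Proposition \ref{PropositionSpectrumSymbol} (using that $\Lambda_{\mrm{part}}\cap\mathbb{R}i=\emptyset$). Recall that such a chain $(\varphi_0,\dots,\varphi_{m-1})$ with $\varphi_0\neq 0$ must satisfy
$$\sum_{s=0}^{k}\tfrac{1}{s!}\mathscr{L}^{(s)}(\lambda_0)\varphi_{k-s}=0,\qquad k=0,\dots,m-1.$$
The key preparatory observation is that $\mathscr{L}(\lambda)=(\lambda^2+d_{yy})^2-k^4$ has real coefficients, and the clamped boundary conditions kill every boundary term in integration by parts, so $\mathscr{L}(\lambda_0)$ is formally symmetric with respect to the bilinear pairing $\langle\cdot,\cdot\rangle$ on $\mH^{-2}(I)\times\mH_0^2(I)$ extending $\int_I uv\,dy$; the Fredholm alternative then asserts that $g\in\mH^{-2}(I)$ belongs to $\mrm{range}(\mathscr{L}(\lambda_0))$ if and only if $\langle g,\varphi_0\rangle=0$. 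Moreover, for $\lambda_0\in\mathbb{R}i$ the quantities $\sqrt{\lambda_0^2\pm k^2}$ are real or purely imaginary, so formulas \eqref{defBase1}--\eqref{defBase2} yield real-valued $a_1,a_2$ (resp.\ $b_1,b_2$), whence $\varphi_0$ may be chosen real.

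For $\lambda_0=i\alpha\in\Lambda\cap\mathbb{R}i\setminus\{0\}$, I would compute $\mathscr{L}'(\lambda)=4\lambda(\lambda^2+d_{yy})$ and test the obstruction for extending the chain to length two:
$$\langle \mathscr{L}'(\lambda_0)\varphi_0,\varphi_0\rangle=4\lambda_0\Bigl(\lambda_0^2\|\varphi_0\|_{\mL^2(I)}^2-\|\varphi_0'\|_{\mL^2(I)}^2\Bigr)=-4i\alpha\Bigl(\alpha^2\|\varphi_0\|_{\mL^2(I)}^2+\|\varphi_0'\|_{\mL^2(I)}^2\Bigr).$$
Since $\alpha\neq 0$ and $\varphi_0\neq 0$, this quantity is nonzero, the length-two equation is unsolvable, and the algebraic multiplicity equals one.

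For $\lambda_0=0\in\Lambda$, I would note that $\mathscr{L}'(0)=0$, so the length-two condition $\mathscr{L}(0)\varphi_1=0$ is trivially satisfied (e.g.\ by $\varphi_1=0$), giving algebraic multiplicity at least two. Using $\mathscr{L}''(\lambda)=12\lambda^2+4d_{yy}$, the length-three condition becomes $\mathscr{L}(0)\varphi_2=-2\,d_{yy}\varphi_0$; its solvability condition $\int_I d_{yy}\varphi_0\cdot\varphi_0\,dy=-\|\varphi_0'\|_{\mL^2(I)}^2=0$ would force $\varphi_0$ constant, hence zero, contradicting $\varphi_0\neq 0$. Therefore the chain has length exactly two. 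The only mild subtlety is ensuring the eigenfunction is real so that the sign in the solvability integrals is decisive; once that is settled, the rest reduces to routine computations of $\mathscr{L}'$, $\mathscr{L}''$ and standard integrations by parts.
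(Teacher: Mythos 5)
Your proposal is correct and follows essentially the same route as the paper: both arguments use the Jordan-chain characterization (with geometric multiplicity one on $\R i$ from Proposition \ref{PropositionSpectrumSymbol}), pair the chain equation against the eigenfunction, exploit that $\mathscr{L}(\lambda_0)$ is symmetric for the bilinear pairing so the $\mathscr{L}(\lambda_0)\varphi_1$ (resp. $\mathscr{L}(0)\varphi_2$) term drops out, and conclude from the nonvanishing of $4\lambda_0(\|d_y\varphi_0\|^2+|\lambda_0|^2\|\varphi_0\|^2)$ (resp. $\|d_y\varphi_0\|^2$). The only cosmetic differences are that you normalize $\varphi_0$ to be real where the paper instead uses $\overline{\varphi_0}$ together with the realness of the coefficients for $\lambda_0\in\R i$, and you invoke the full Fredholm alternative where only the trivial necessity direction is actually needed.
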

\begin{proof}
We remind the reader that for $\lambda\in\Cplx$, we denote $\mathscr{L}(\lambda):\mH^2_0(I)\to \mH^{-2}(I)$ the operator such that $\mathscr{L}(\lambda)\varphi =d^4_y\varphi+2\lambda^2d^2_y\varphi+(\lambda^4-k^4)\varphi$. Assume that $\lambda_0\in\Lambda$. Then there is $\varphi_0\not\equiv0$ such that $\mathscr{L}(\lambda_0)\varphi_0=0$. Assume that the algebraic multiplicity of $\lambda_0$ is larger than one. By definition, this means that there is $\varphi_1\in\mH^2_0(I)$, with $\varphi_1\not\equiv0$, such that 
\begin{equation}\label{identityAlgebraicMulti}
\mathscr{L}(\lambda_0)\varphi_1+\cfrac{d\mathscr{L}}{d\lambda}|_{\lambda=\lambda_0}\,\varphi_0=0\qquad\Leftrightarrow\qquad 4\lambda_0(d_y^2\varphi_0+\lambda_0^2\varphi_0)=-\mathscr{L}(\lambda_0)\varphi_1.
\end{equation}
Multiplying by $\overline{\varphi_0}$ the identities of (\ref{identityAlgebraicMulti}) and integrating by parts, we obtain 
\begin{equation}\label{AlgMul1}
4\lambda_0(\|d_y \varphi_0\|_{\mrm{L}^2(I)}^2-\lambda_0^2\|\varphi_0\|_{\mrm{L}^2(I)}^2)=-\langle \mathscr{L}(\lambda_0)\varphi_1,\overline{\varphi_0}\rangle_I=-\langle \mathscr{L}(\lambda_0)\overline{\varphi_0},\varphi_1\rangle_I.
\end{equation}
where  $\langle\cdot,\cdot\rangle_I$ stands for the bilinear duality pairing between $\mH^{-2}(I)$ and $\mH^2_0(I)$.
Assume that $\lambda_0\in\Lambda\cap \R i\setminus\{0\}$. Then we have $\mathscr{L}(\lambda_0)\overline{\varphi_0}=\overline{\mathscr{L}(\lambda_0)\varphi_0}=0$. Therefore, since  identity (\ref{AlgMul1}) leads to $\varphi_0\equiv0$. This is absurd and shows that the algebraic multiplicity of the elements of $\Lambda\cap \R i\setminus\{0\}$ is equal to one.\\
\newline
Now, let us focus on the algebraic multiplicity of $\lambda_0=0$, assuming that $\lambda_0=0$ belongs to $\Lambda$. From Equation (\ref{identityAlgebraicMulti}), by taking $\varphi_1=\varphi_0$, we see that its algebraic multiplicity is at least two. Assume that it is larger than two. Then there is $\varphi_2\in\mH^2_0(I)$, with $\varphi_2\not\equiv0$, such that 
\begin{equation}\label{identityAlgebraicMultiBis}
\mathscr{L}(0)\varphi_2+\cfrac{d\mathscr{L}}{d\lambda}|_{\lambda=0}\,\varphi_1+\cfrac{d^2\mathscr{L}}{d\lambda^2}|_{\lambda=0}\,\varphi_0=0\qquad\Leftrightarrow\qquad 4\,d_y^2\varphi_0=-\mathscr{L}(0)\varphi_2.
\end{equation}
Multiplying by $\overline{\varphi_0}$ the identities of (\ref{identityAlgebraicMultiBis}) and integrating by parts, this implies 
\[
4 \|d_y \varphi_0\|_{\mrm{L}^2(I)}^2=\langle \mathscr{L}(0)\overline{\varphi_0},\varphi_2\rangle_I=\langle \overline{\mathscr{L}(0)\varphi_0},\varphi_2\rangle_I=0.
\]
 Thus, we obtain a contradiction and we can conclude that if $\lambda_0=0$ is an eigenvalue of $\mathscr{L}$, then its algebraic multiplicity is equal to two.
\end{proof}
\begin{remark}
	We specify the algebraic multiplicity of modal exponents in the previous proposition because it will be required in the proof of Proposition \ref{propoDecomposition} (where the Residue theorem is implicitly used).
	\end{remark}
\noindent In the following, for a given $k>0$, we will need to know the cardinal of the set $\Lambda\cap\R i$. From (\ref{dispersion3}), we find that $0$ belongs to $\Lambda$ if $k>0$ is such that 
\begin{equation}\label{EquationLambdaNul}
h_{k}(0)=0\qquad\Leftrightarrow\qquad \cos(k)\cosh(k)=1.
\end{equation}
The set of $k>0$ such that (\ref{EquationLambdaNul}) holds (threshold wavenumbers) forms an increasing unbounded sequence 
\begin{equation}\label{defThreshold}
0<k_1<k_2<\dots<k_n<\dots\qquad \mbox{ such that }\qquad k_n\underset{n\to+\infty}{\sim}\pi/2+n\pi.
\end{equation}
Taking $\lambda=0$ in (\ref{fortInitial})-(\emph{ii}), we observe that $k_n^4$ corresponds to the $n^{th}$ eigenvalue of the problem 
\be \label{EigenPbBilapl1D}
\left\{
\begin{array}{rcll}
 d^4_y\varphi-\mu\,\varphi& = & 0 & \mbox{ in } I\\
\varphi=d_{y} \varphi & = & 0 & \mbox{ on } \partial I .
\end{array}
\right.
\ee
In the proposition below, we prove that for all $n \in \mathbb{N}^\ast$, the threshold wavenumbers $k_n$ for the clamped strip are larger than the threshold wavenumbers $n\pi$ for the simply supported strip.
\begin{proposition}
For all $n \in \mathbb{N}^\ast$, we have $k_n \geq n\pi$.
\end{proposition}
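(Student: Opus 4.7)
The plan is to exploit the variational characterization of $k_n^4$ as the $n$-th eigenvalue of the clamped bilaplacian problem \eqref{EigenPbBilapl1D}, and to compare it with the $n$-th eigenvalue of the analogous simply supported bilaplacian problem, whose eigenvalues can be computed explicitly as $(n\pi)^4$.

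First, I would rewrite \eqref{EigenPbBilapl1D} in variational form: find $(\mu,\varphi)\in\R_+\times\mH^2_0(I)\setminus\{0\}$ such that
\begin{equation*}
\int_I d_y^2\varphi\,\overline{d_y^2\psi}\,dy=\mu\int_I\varphi\,\overline{\psi}\,dy\qquad\forall\psi\in\mH^2_0(I).
\end{equation*}
By classical spectral theory, the eigenvalues $\mu_n^{\mrm{c}}$ of this problem (counted with multiplicity) satisfy the Courant--Fischer min-max formula
\begin{equation*}
\mu_n^{\mrm{c}}=\min_{\substack{V\subset\mH^2_0(I)\\ \dim V=n}}\;\max_{\varphi\in V\setminus\{0\}}\;\cfrac{\int_I|d_y^2\varphi|^2\,dy}{\int_I|\varphi|^2\,dy}.
\end{equation*}
Since $\mu_n^{\mrm{c}}=k_n^4$ (using that $k_n$ is defined as the $n$-th threshold and Proposition \ref{PropositionAlgMult} ensures the ordering), the task reduces to bounding this min-max from below by $(n\pi)^4$.

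Second, I would introduce the auxiliary simply supported bilaplacian problem on $I$, namely $d_y^4\varphi=\mu\,\varphi$ in $I$ with $\varphi=d_y^2\varphi=0$ on $\partial I$. Its eigenpairs can be determined explicitly: with the orthonormal basis $\theta_p(y)=\sqrt{2}\sin(\pi p y)$ from \eqref{transverse} in Paragraph \ref{paragraphModesSimply}, one checks that the eigenvalues are $\mu_n^{\mrm{s}}=(n\pi)^4$. Its weak formulation uses the same bilinear form $\int_I d_y^2\varphi\,\overline{d_y^2\psi}\,dy$ but is posed on the larger space $\mH^1_0(I)\cap\mH^2(I)$ (the fact that this bilinear form is indeed associated to the simply supported problem follows from an integration by parts using $d_y^2\varphi=0$ on $\partial I$).

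Third, I would apply the monotonicity of eigenvalues with respect to the variational space. The crucial inclusion is $\mH^2_0(I)\subset\mH^1_0(I)\cap\mH^2(I)$: every admissible subspace $V$ in the min-max for $\mu_n^{\mrm{c}}$ is also admissible for $\mu_n^{\mrm{s}}$, hence
\begin{equation*}
\mu_n^{\mrm{s}}\leq\mu_n^{\mrm{c}}\qquad\Longleftrightarrow\qquad (n\pi)^4\leq k_n^4,
\end{equation*}
and taking the positive fourth root yields $k_n\geq n\pi$. The only subtle point, and the main thing to verify carefully, is that the enumeration of the $k_n$ defined by $\cos(k)\cosh(k)=1$ indeed coincides with the enumeration of the eigenvalues $\mu_n^{\mrm{c}}$ of \eqref{EigenPbBilapl1D} in increasing order; this is essentially what was stated after \eqref{defThreshold} and can be justified by noting that $\lambda=0\in\Lambda$ precisely when $k^4$ belongs to the clamped spectrum, that the clamped spectrum is a strictly increasing unbounded sequence, and that the $k_n$ are ordered in the same way.
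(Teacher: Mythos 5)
Your proposal is correct and takes essentially the same route as the paper: both identify $k_n^4$ with the $n$-th eigenvalue of the clamped bilaplacian via its Courant--Fischer min-max characterization, compare with the simply supported bilaplacian whose eigenvalues are $(n\pi)^4$, and conclude from the space inclusion $\mH_0^2(I)\subset\mH_0^1(I)\cap\mH^2(I)$. Your additional remark about verifying that the $k_n$ defined by $\cos(k)\cosh(k)=1$ are indeed enumerated consistently with the ordered clamped spectrum is a reasonable point of caution, which the paper leaves implicit.
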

\begin{proof}
By the {\rm min-max} principle, the $n^{th}$ eigenvalue of the problem 
(\ref{EigenPbBilapl1D}) is given by
\[\mu_n:=\min_{V_n \in \mathcal{V}_n(\mH_0^2(I))} \max_{\varphi \in V_n} \frac{\|d_{yy} \varphi\|^2_{\mL^2(I)}}{\|\varphi\|^2_{\mL^2(I)}},\]
where $\mathcal{V}_n(\mH)$ denotes the set of all $n$ dimensional subspaces of $\mH$.
Since $\mH_0^2(I) \subset \mH_0^1(I) \cap \mH^2(I)$, we have
\[\mu_n \geq \tilde{\mu}_n:=\min_{V_n \in \mathcal{V}_n(\mH_0^1(I) \cap \mH^2(I))} \max_{\varphi \in V_n} \frac{\|d_{yy} \varphi\|^2_{\mL^2(I)}}{\|\varphi\|^2_{\mL^2(I)}}.\]
We observe that $\tilde{\mu}_n$ coincides with the $n^{th}$ eigenvalue of the problem 
\[
\left\{
\begin{array}{rcll}
 d^4_y\varphi-\tilde{\mu}\,\varphi& = & 0 & \mbox{ in } I\\
\varphi=d_{yy} \varphi & = & 0 & \mbox{ on } \partial I,
\end{array}
\right.
\]
that is $\tilde{\mu}_n=n^4\pi^4 $. We end up with $k_n=(\mu_n)^{1/4} \geq n\pi$ for all $n \geq 1$.
\end{proof}
\noindent The approximated values of the first $k_n$ are given in Figure \ref{ValuesKn}.
\begin{figure}[!ht]
\centering
\renewcommand{\arraystretch}{1.4}
\begin{tabular}{|c|c|c|}
 \hhline{~|*2-}
\multicolumn{1}{c|}{\cellcolor{white}} & \multicolumn{1}{c|}{\cellcolor{Gray}$k_n$} & \multicolumn{1}{c|}{\cellcolor{Gray}$\pi/2+n\pi$} \\
\hline 
$n=1$ & $ $4.730040745$   $ & $4.712388981$  \\
\hline 
$n=2$ & $ 7.853204624   $ & $ 7.853981635 $ \\
\hline 
$n=3$ & $ 10.99560784  $ & $ 10.99557429 $\\
\hline 
$n=4$ & $ 14.13716549   $ & $ 14.13716694 $\\
\hline 
$n=5$ & $ 17.27875966   $ & $ 17.27875960 $\\
\hline
\end{tabular}
\caption{Approximated values of the first $k_n$. \label{ValuesKn}} 
\end{figure}

\begin{proposition}\label{propoCard}
For $k\in(0;k_1)\Leftrightarrow k^4<\mu_1$ where $\mu_1$ is the first eigenvalue of Problem (\ref{EigenPbBilapl1D}), we have $\Lambda\cap\R i=\emptyset$. For $k\in(k_n;k_{n+1})$, $n\in\N^{\ast}$, we have $\mrm{card}\,(\Lambda\cap\R i)=2P$ where $P$ is the number of zeros of the function $h_{k}(\cdot)$ defined in (\ref{dispersion3}) on $(0;1)$. For $k=k_n$, $n\in\N^{\ast}$, we have $\mrm{card}\,(\Lambda\cap\R i)=2P-1$ where $P$ is the number of zeros of the function $h_{k}(\cdot)$ on $(0;1)$.
\end{proposition}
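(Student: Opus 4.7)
The plan is to reduce $\Lambda\cap\R i$ to the zero set of $h_k$ on $[0,1)$ and then count symmetric pairs. Since the symbol $\mathscr{L}(\lambda)$ in (\ref{defOperator}) depends on $\lambda$ only through $\lambda^2$, we have $\Lambda=-\Lambda$, so it suffices to examine $\Lambda\cap[0,+i\infty)$. Using the announced Lemma \ref{resultPrelim} which gives $\Lambda\cap[ik,+i\infty)=\emptyset$, every such eigenvalue may be written $\lambda=i\tau k$ with $\tau\in[0,1)$; in that range $\lambda^4\neq k^4$, so Proposition \ref{PropositionSpectrumSymbol}(1) applies. Substituting $\lambda=i\tau k$ into the dispersion relation (\ref{dispersion1}) with $\sqrt{\lambda^2+k^2}=k\sqrt{1-\tau^2}$, $\sqrt{\lambda^2-k^2}=ik\sqrt{1+\tau^2}$, and using $\sin(iy)=i\sinh y$, $\cos(iy)=\cosh y$, one recovers exactly $h_k(\tau)=0$ as in (\ref{dispersion3}). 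Hence $\Lambda\cap[0,ik)$ is in bijection with $\{\tau\in[0,1):h_k(\tau)=0\}$.

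Evaluating at the endpoint, $h_k(0)=2(\cos k\,\cosh k-1)$ vanishes iff $k$ belongs to the sequence $(k_n)$ defined via (\ref{EquationLambdaNul})--(\ref{defThreshold}). For $k\in(k_n,k_{n+1})$ we thus have $0\notin\Lambda$, and each zero $\tau_\ast\in(0,1)$ of $h_k$ supplies the pair $\pm i\tau_\ast k\in\Lambda\cap\R i$, giving $\mrm{card}(\Lambda\cap\R i)=2P$. At a threshold $k=k_n$ the element $\lambda=0$ is picked up once, and combining it with the pairs coming from the zeros of $h_k$ in $(0,1)$ the count rearranges to the announced value $2P-1$.

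The substantive point is the assertion $\Lambda\cap\R i=\emptyset$ in the sub-threshold regime $k\in(0,k_1)$, equivalently $k^4<\mu_1$. I would establish it by an energy identity. Assume for contradiction that $\lambda=i\tau k\in\Lambda$ for some $\tau\in[0,1)$ and pick $\varphi\in\mH^2_0(I)\setminus\{0\}$ solving $(d_y^2-\tau^2k^2)^2\varphi=k^4\varphi$. Pairing with $\overline{\varphi}$ in $\mL^2(I)$ and integrating by parts twice using the clamped conditions $\varphi=d_y\varphi=0$ on $\partial I$, one obtains
\[
\|d_y^2\varphi\|^2_{\mL^2(I)}+2\tau^2 k^2\,\|d_y\varphi\|^2_{\mL^2(I)}+\tau^4 k^4\,\|\varphi\|^2_{\mL^2(I)}=k^4\,\|\varphi\|^2_{\mL^2(I)}.
\]
Dropping the non-negative middle term yields $\|d_y^2\varphi\|^2_{\mL^2(I)}\le k^4(1-\tau^4)\|\varphi\|^2_{\mL^2(I)}\le k^4\,\|\varphi\|^2_{\mL^2(I)}$, and the Rayleigh characterisation of $\mu_1$ as the minimum of $\|d_y^2\psi\|^2_{\mL^2(I)}/\|\psi\|^2_{\mL^2(I)}$ over $\mH^2_0(I)\setminus\{0\}$ then forces $\mu_1\le k^4$, contradicting $k^4<\mu_1$. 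This energy-plus-Rayleigh combination is where the proof does real work; the remaining content is substitution and elementary counting.
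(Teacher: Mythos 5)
Your proof is correct, and in fact it supplies more rigor than the paper itself provides: the paper states Proposition~\ref{propoCard} without an explicit proof, relying on the surrounding discussion (the symmetry $\Lambda=-\Lambda$, the reduction of the dispersion relation on $(0;ik)$ to $h_k(\tau)=0$ for $\tau\in[0;1)$, and the observation that $k_n^4$ is the $n$th eigenvalue of~(\ref{EigenPbBilapl1D})). What the paper leaves implicit is precisely the sub-threshold assertion $\Lambda\cap\R i=\emptyset$ for $k<k_1$, and your energy identity combined with the Rayleigh characterisation of $\mu_1$ is the natural way to close that gap. Note that your argument is exactly consistent with the paper's own machinery: it is the quadratic form of Lemma~\ref{resultPrelim} (equation~(\ref{SymbolComplexe})) paired with $\overline\varphi$, specialised to $|\lambda|=\tau k<k$, and it uses the same min--max characterisation of $\mu_1$ already invoked in the proof that $k_n\ge n\pi$. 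In fact the same one-line estimate $\|d_{yy}\varphi\|^2_{\mL^2(I)}\le k^4(1-\tau^4)\|\varphi\|^2_{\mL^2(I)}$ also reproves the paper's claim $\Lambda\cap[ik;+i\infty)=\emptyset$ (take $\tau\ge 1$ and get $\|d_{yy}\varphi\|^2\le 0$), so you did not even need to invoke Lemma~\ref{resultPrelim} as an independent ingredient.

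One small point to tighten: your last sentence on the threshold case ($k=k_n$) glosses over an off-by-one that is present in the paper's wording itself. At $k=k_n$ the set $\Lambda\cap\R i$ consists of $\lambda=0$ together with the pairs $\pm i\tau_\ast k$ coming from zeros $\tau_\ast\in(0;1)$ of $h_k$; this is $1+2\,\#\{\tau\in(0;1):h_k(\tau)=0\}$, which equals $2P-1$ only if $P$ is taken to count zeros on the half-closed interval $[0;1)$ rather than $(0;1)$. Making that interpretation explicit (or pointing out the notational slip in the statement) would make your proof airtight; as written, ``the count rearranges'' does not explain where the $-1$ comes from.
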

\begin{remark}
Numerically, it seems that $P=n$ as in the simply supported case.
\end{remark}
\section{Well-posedness in the simply supported case}\label{SectionSimply}
In this section, we suppose that $k$ is not a threshold wavenumber, \emph{i.e.} $k\notin \mathbb{N}\pi$.
\subsection{Construction of Dirichlet-to-Neumann operators}\label{paragraphDtN}
In order to study Problem (\ref{u_model}) in the case when $C=M$, let us first consider the following system of equations set in the reference strip (without hole):
\be
\left\{
\begin{array}{rcll}
\Delta^2 u -k^4 u& = & 0  & \mbox{ \rm{in} }\Omega\\
u =\Delta u& = & 0 & \mbox{ \rm{on} }\partial \Omega.
\end{array}\right.
\label{modes}
\ee
We remind the reader that since $\partial\Om$ is made of straight lines, we have $u=Mu=0\mbox{ on }\partial\Om\Leftrightarrow u=\Delta u=0$ on $\partial\Om$ (see (\ref{MN})). In (\ref{modes}), we do not prescribe any behaviour at infinity. As a consequence, this problem can have non zeros solutions. Let us compute them. Noting (again) that $\Delta^2  -k^4 =(\Delta -k^2)(\Delta +k^2)$ and  defining
\begin{equation}\label{SomDif}
\tilde{u}:=(\Delta -k^2)u,\qquad \qquad \check{u}:=(\Delta+k^2)u,\end{equation}
we see that $\tilde{u},\check{u}$ solve the problems
\[
\left\{
\begin{array}{rcll}
\Delta \tilde{u} +k^2 \tilde{u}& = & 0  & \mbox{ \rm{in} }\Omega \\
\tilde{u} & = & 0 & \mbox{ \rm{on} }\partial \Omega 
\end{array}\right.
\qquad\quad {\rm and} \qquad\quad
\left\{
\begin{array}{rcll}
\Delta \check{u} -k^2 \check{u}& = & 0  & \mbox{ \rm{in} }\Omega\\
\check{u} & = & 0 & \mbox{ \rm{on} }\partial \Omega.
\end{array}\right.
\]
Using that the family $(\theta_p)$ of the eigenfunctions of Problem (\ref{transverse}) forms a Hilbert basis of $\mL^2(I)$, we can decompose $\tilde{u}$, $\check{u}$ as 
\[\tilde{u}(x,y)=\sum_{p=1}^{+\infty}\tilde{u}_p(x)\theta_p(y),\qquad\qquad    \check{u}(x,y)=\sum_{p=1}^{+\infty} \check{u}_p(x)\theta_p(y).\]
Then we find that the $\tilde{u}_p$, $\check{u}_p$ satisfy 
\[d_{xx}\tilde{u}_p+(k^2-\mu_p) \tilde{u}_p=0 \qquad{\rm and}\qquad  d_{xx}\check{u}_p-(k^2+\mu_p)\check{u}_p=0  \quad \rm{in}\quad  \mathbb{R}.\]
Since $k\notin \mathbb{N}\pi$, we obtain that $\tilde{u}$, $\check{u}$ are given by
\[
\dsp\tilde{u}(x,y) =\dsp \sum_{p=1}^{+\infty}\left(a_p e^{i\eta_p x} + b_p e^{-i\eta_p x}\right)\theta_p(x)\qquad{\rm and}\qquad  \dsp \check{u}(x,y)= \dsp \sum_{p=1}^{+\infty}\left(c_p e^{-\gamma_p x} + d_p e^{\gamma_p x}\right)\theta_p(u),
\]
where $\eta_p,\gamma_p$ are defined in (\ref{etan_gamman}) and where  $a_p,b_p,c_p,d_p$ are complex numbers. Observing that $u=(\check{u}-\tilde{u})/2k^2$ (see (\ref{SomDif})), we deduce that the general form of the solutions to Problem (\ref{modes}) is
\be 
u(x,y)=\sum_{p=1}^{+\infty}\left(a_p e^{i\eta_p x} + b_p e^{-i\eta_p x} + c_p e^{-\gamma_p x} + d_p e^{\gamma_p x}\right)\theta_p(y),
\label{general_form}\ee
with new complex numbers $a_p,b_p,c_p,d_p$.
\begin{remark}\label{RmkBasis}
We see that (\ref{general_form}) is an expansion on the modes
$e^{\la x}\varphi(y)$ computed in \S\ref{paragraphModesSimply} (here $\la$ belongs to $\Lambda$, the set of modal exponents given in Proposition \ref{lambda}, and $\varphi\in\ker\,\mathscr{L}(\lambda)$). Note that this strong result of modal decomposition has been obtained thanks to the fact that the family $(\theta_p)$ forms a Hilbert basis of $\mL^2(I)$. 
\end{remark}
\noindent For $k\in(0;\pi)$ all the modes appearing in  (\ref{general_form}) are exponentially growing at one end of $\Om$ and exponentially decaying at the other end. In this case, we shall look for  solutions to (\ref{u_model}) which are exponentially decaying at infinity. For $k\in(n\pi;(n+1)\pi)$ with $n\in\N^{\ast}$, the modes $e^{\pm i\eta_p x}\,\theta_p(y)$, $p=1,\dots,n$, are propagating while the other ones are exponentially growing at one end of $\Om$ and exponentially decaying at the other end. 
\\\\
In the sequel, we will say that 
\be\label{eq:rightgoing}
	u \text{ is rightgoing iff}\quad\begin{array}{|l} \dsp\text{for some $L>0$ and some complex numbers $a^+_p,b^+_p$},\\
	\dsp u(x,y)=\sum_{p=1}^{+\infty}\left(a^+_p e^{+ i\eta_p x} + b^+_p e^{-\gamma_p x}\right)\theta_p(y)\qquad\mbox{ for } x\ge L,\end{array}
\ee
\be\label{eq:leftgoing}
	u\text{ is leftgoing iff}\quad \begin{array}{|l} \dsp\text{for some $L>0$ and some complex numbers $a^-_p,b^-_p$},\\
	\dsp u(x,y)=\sum_{p=1}^{+\infty}\left(a^-_p e^{- i\eta_p x} + b^-_p e^{\gamma_p x}\right)\theta_p(y)\qquad\mbox{ for } x\le -L,\end{array}
\ee
\be\label{outgoing}\tag{RC}
	u \text{ is outgoing iff } u \text{ is rightgoing and leftgoing.}
\ee
\noindent This terminology will be justified in Section \ref{selection}. Equivalently, a function $u$ satisfies the radiation conditions or is outgoing.
\\\\
Now, we introduce adapted Dirichlet-to-Neumann (DtN) operators in order to enclose this outgoing behaviour. On the transverse sections $\Sigma_{\pm L}:=\{\pm L\}\times(0;1)$, we define for $j \in \{-3/2,-1/2,1/2,3/2\}$, the spaces
\[\mH^j(\Sigma_{\pm L})=\{u|_{\Sigma_{\pm L}}\,|\,u \in \mH^j(\{\pm L\}\times\mathbb{R})\},\quad  \tilde{\mH}^j(\Sigma_{\pm L})=\{u \in \mH^j(\{\pm L\}\times\mathbb{R})\,|\, {\supp}(u) \in \overline{\Sigma_{\pm L}}\}.\]
It is well-known (see \cite{McLe00}) that $(\mH^j(\Sigma_{\pm L}))^{\ast}=\tilde{\mH}^{-j}(\Sigma_{\pm L})$.
We define the two operators 
\[T_{\pm} : \tilde{\mH}^{3/2}(\Sigma_{\pm L}) \times \mH^{1/2}(\Sigma_{\pm L}) \to  \mH^{-3/2}(\Sigma_{\pm L}) \times \tilde{\mH}^{-1/2}(\Sigma_{\pm L})\]
by
\[T_\pm \bigg(\begin{array}{c}
g_\pm\\
h_\pm
\end{array}\bigg)=\bigg(\begin{array}{c}
Nu_\pm|_{\Sigma_{\pm L}}\\
Mu_\pm|_{\Sigma_{\pm L}}
\end{array}\bigg),\]
where $u_+$ (resp. $u_-$) is rightgoing as defined in \eqref{eq:rightgoing} (resp. leftgoing as defined in \eqref{eq:leftgoing}), satisfies (\ref{modes}) and is such that $(u_+,\partial_{x} u_+)|_{\Sigma_L}=(g_+,h_+)$ on $\Sigma_L$ (resp. $(u_-,-\partial_{x} u_-)|_{\Sigma_{-L}}=(g_-,h_-)$ on $\Sigma_{-L}$). Let us give an explicit definition of $u_{\pm}$ leading to an explicit expression for $T_\pm$. We detail the computation for $T_+$. Since $\partial_n=\partial_x$ and $\partial_s=\partial_y$ on $\Sigma_L$, we have 
\[Mu_+=\ders{u_+}{x} +\nu \ders{u_+}{y},\quad\qquad Nu_+=-\frac{\partial^3 u_+}{\partial{x}^3} - (2-\nu) \frac{\partial^3 u_+}{\partial x\,\partial{y}^2}\qquad\mbox{ on }\Sigma_L.\] 
Decomposition (\ref{outgoing}) and the fact that $u_+$ is rightgoing imply the following expansion for $u_+$ 
\be 
u_+(x,y)=\sum_{p=1}^{+\infty}\left(a_p e^{i\eta_p (x-L)} + b_p e^{-\gamma_p (x-L)}\right)\theta_p(y).
\label{outgoing_plus}
\ee
Hence we have
\[
\left. u_+\right|_{\Sigma_L}=\sum_{p=1}^{+\infty}(a_p + b_p )\theta_p(y) \qquad {\rm and} \qquad
\left. \deri{u_+}{x} \right|_{\Sigma_L}=\sum_{p=1}^{+\infty}(i\eta_p a_p  - \gamma_p b_p)\theta_p(y).
\]
By using the decompositions
\[g_+= \left. u_+\right|_{\Sigma_L} =\sum_{p=1}^{+\infty}g^+_p \theta_p, \quad\qquad
h_+= \left. \deri{u_+}{x} \right|_{\Sigma_L}=\sum_{p=1}^{+\infty}h^+_p \theta_p,\]
we obtain $g^+_p = a_p+b_p$ and $h^+_p = i\eta_p a_p  - \gamma_p b_p$ for all $p \in \mathbb{N}^\ast$. Inverting this system gives
\be  
\bigg(\begin{array}{c}
a_p\\
b_p
\end{array}\bigg)=\frac{1}{\gamma_p+ i\eta_p}
\bigg(\begin{array}{cc}
\gamma_p& 1 \\
i\eta_p & -1 
\end{array}\bigg) \bigg(\begin{array}{c}
g^+_p\\
h^+_p
\end{array}\bigg).\label{a+}\ee
From the above expressions of $Mu$ and $Nu$, is follows that
\[
\left\{
\begin{array}{lcl}
\dsp \left.Mu_+\right|_{\Sigma_L} &=& \dsp\sum_{p=1}^{+\infty}(-\eta_p^2 a_p + \gamma_p^2 b_p)\theta_p(y)  
-\nu \sum_{p=1}^{+\infty}(\mu_p a_p + \mu_p b_p)\theta_p(y), \\
\dsp\left.Nu_+\right|_{\Sigma_L}&=& \dsp-\sum_{p=1}^{+\infty}(-i\eta_p^3 a_p - \gamma_p^3 b_p)\theta_p(y)  
+(2-\nu) \sum_{p=1}^{+\infty}(i\mu_p\eta_p a_p - \mu_p \gamma_p b_p)\theta_p(y). 
\end{array}\right.
\]
We hence have
\[\bigg(\begin{array}{c}
\left. Nu_+\right|_{\Sigma_L}\\
\left. Mu_+\right|_{\Sigma_L}
\end{array}\bigg)= \sum_{p=1}^{+\infty}\bigg(\begin{array}{cc}
i\eta_p^3 +i (2-\nu) \mu_p\eta_p & \gamma_p^3-(2-\nu)\mu_p\gamma_p \\
-(\eta_p^2+\nu \mu_p) & \gamma_p^2-\nu \mu_p 
\end{array}\bigg) \bigg(\begin{array}{c}
a_p\\
b_p
\end{array}\bigg)\theta_p.
\]
We are now in position to obtain the expression of $T_+$: 
\[T_+\bigg(\begin{array}{c}
g_+\\
h_+
\end{array}\bigg)=\sum_{p=1}^{+\infty} T_p\bigg(\begin{array}{c}
g^+_p\\
h^+_p
\end{array}\bigg)\theta_p,\]
where the 2 by 2 matrices $T_p$ are given by
\[T_p=\frac{1}{\gamma_p + i\eta_p} \bigg(\begin{array}{cc}
i\eta_p^3 +i (2-\nu) \mu_p\eta_p & \gamma_p^3-(2-\nu)\mu_p\gamma_p \\
-(\eta_p^2+\nu \mu_p) & \gamma_p^2-\nu \mu_p 
\end{array}\bigg) \bigg(\begin{array}{cc}
\gamma_p& 1 \\
i\eta_p & -1 
\end{array}\bigg).\]
Using (\ref{etan_gamman}), we deduce that 
\be T_p=\bigg(\begin{array}{cc}
i \gamma_p \eta_p (\gamma_p-i\eta_p) &  i\gamma_p \eta_p -\nu \mu_p\\
 i\gamma_p \eta_p -\nu \mu_p & -(\gamma_p-i\eta_p)
\end{array}\bigg).\label{tprop}\ee
Concerning $T_-$, we prove similarly that for
\[g_-= \left. u_-\right|_{\Sigma_{-L}} =\sum_
{p=1}^{+\infty}g^-_p \theta_p, \quad
h_-= \left. -\deri{u_-}{x} \right|_{\Sigma_{-L}}=\sum_{p=1}^{+\infty} h^-_p \theta_p,\]
we have
\[T_-\bigg(\begin{array}{c}
g_-\\
h_-
\end{array}\bigg)=\sum_{p=1}^{+\infty} T_p\bigg(\begin{array}{c}
g^-_p\\
h^-_p
\end{array}\bigg)\theta_p,\]
where the matrix $T_p$ is defined by (\ref{tprop}). This concludes the construction of the Dirichlet-to-Neumann operators $T_{\pm}$ which enclose the outgoing behaviour for Problem (\ref{u_model}) with $C=M$ as $x\to\pm\infty$. In the following, we explain how to use these operators to reduce the analysis of (\ref{u_model}) to a bounded domain and establish Fredholmness. 

\subsection{Source term problem in the reference strip with radiation conditions}

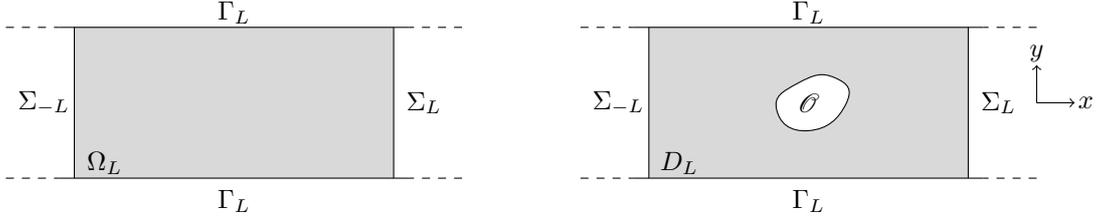
\begin{figure}[!ht]
\centering
\begin{tikzpicture}
\draw[fill=gray!30,draw=none](-2.1,0) rectangle (2.1,2);
\draw (-2.3,0)--(2.3,0);
\draw (-2.3,2)--(2.3,2);
\draw (-2.1,0)--(-2.1,2);
\draw (2.1,0)--(2.1,2);
\draw [dashed](-3,0)--(-2.3,0);
\draw [dashed](3,0)--(2.3,0);
\draw [dashed](-3,2)--(-2.3,2);
\draw [dashed](3,2)--(2.3,2);
\node at (-1.7,0.2){\small$\Om_L$};
\node at (-2.5,1){\small$\Sigma_{-L}$};
\node at (2.5,1){\small$\Sigma_L$};
\node at (0,-0.3){\small$\Gamma_L$};
\node at (0,2.2){\small$\Gamma_L$};
\end{tikzpicture}\qquad\qquad\begin{tikzpicture}
\draw[fill=gray!30,draw=none](-2.1,0) rectangle (2.1,2);
\draw (-2.3,0)--(2.3,0);
\draw (-2.3,2)--(2.3,2);
\draw (-2.1,0)--(-2.1,2);
\draw (2.1,0)--(2.1,2);
\draw [dashed](-3,0)--(-2.3,0);
\draw [dashed](3,0)--(2.3,0);
\draw [dashed](-3,2)--(-2.3,2);
\draw [dashed](3,2)--(2.3,2);
\begin{scope}[scale=0.7,yshift=0.4cm]
\draw [fill=white] plot [smooth cycle, tension=1] coordinates {(-0.6,0.9) (0,0.5) (0.7,1) (0.5,1.5) (-0.2,1.4)};
\end{scope}
\node at (0,1){\small$\mathscr{O}$};
\node at (-1.7,0.2){\small$D_L$};
\node at (-2.5,1){\small$\Sigma_{-L}$};
\node at (2.5,1){\small$\Sigma_L$};
\node at (0,-0.3){\small$\Gamma_L$};
\node at (0,2.2){\small$\Gamma_L$};
\begin{scope}[shift={(3,1)}]
\draw[->] (0,0)--(0.5,0);
\draw[->] (0,0)--(0,0.5);
\node at (0.65,0){\small$x$};
\node at (0,0.65){\small$y$};
\end{scope}
\end{tikzpicture}\vspace{-0.2cm}
\caption{Domains $\Om_L$ (left) and $D_L$ (right). \label{PictureBoundedDomains}} 
\end{figure}
\noindent Before addressing Problem (\ref{u_model}) for $C=M$ with a hole, let us consider the simpler problem in the reference strip $\Om$ without hole. For  some compactly supported function $f \in \mL^2(\Omega)$, this problem states: find $u$ in $\mH^2_{\rm loc}(\Omega)$ such that
\be
\left\{
\begin{array}{rcll}
\Delta^2 u -k^4 u& = & f  & \mbox{ \rm{in} }\Omega \\
u =Mu & = & 0 & \mbox{ \rm{on} }\partial \Omega \\
\multicolumn{4}{c}{u \mbox{ \rm{satisfies} } \mrm{(RC).}} 
\end{array}\right.
\label{u_ref}
\ee
Here $\mH^2_{\rm loc}(\Omega)$ denotes the set of distributions $u$ in $\Omega$ such that $\varphi(x) u(x,y) \in \mH^2(\Omega)$, for all $\varphi \in \mathscr{C}^{\infty}_0(\mathbb{R})$. Again, we assume that $k\in (n\pi;(n+1)\pi)$ for some $n \in \mathbb{N}$. 
\\\\
For $k\in(0;\pi)$, the radiation conditions (RC) imply that the solution is exponentially decaying at $\pm\infty$. In this case, the analysis is a bit simpler. We can prove the following proposition. 
\begin{proposition}\label{RmkBelowThreshold}
When $k\in(0;\pi)$, for all $f\in(\mH_0^1(\Om) \cap \mH^2(\Om))^\ast$, the problem
\[
\left\{
\begin{array}{rcll}
\Delta^2 u -k^4 u& = & f  & \mbox{ \rm{in} }\Omega \\
u =Mu & = & 0 & \mbox{ \rm{on} }\partial \Omega
\end{array}\right.
\]
admits a unique solution in $\mH_0^1(\Om) \cap \mH^2(\Om)$.
\end{proposition}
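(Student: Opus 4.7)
My plan is to recast the boundary value problem variationally and then apply the Lax-Milgram theorem, the key point being that for $k<\pi$ the first transverse Dirichlet eigenvalue $\mu_1=\pi^2$ strictly exceeds $k^2$, which will force coercivity. Set $V:=\mH_0^1(\Om)\cap\mH^2(\Om)$, equipped with its graph norm, and consider the continuous sesquilinear form
\[a(u,v):=\int_\Om\Delta u\,\overline{\Delta v}\,dx\,dy-k^4\int_\Om u\,\bar v\,dx\,dy.\]
Integrating by parts twice shows that the stated boundary value problem is equivalent to the variational formulation: find $u\in V$ such that $a(u,v)=\langle f,v\rangle$ for every $v\in V$. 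The Dirichlet condition $u=0$ on $\partial\Om$ is essential (built into $V$); the condition $Mu=0$, which on a straight boundary reduces to $\Delta u=0$ by \eqref{MN}, emerges as a natural boundary condition because test functions in $V$ are not constrained to have vanishing normal derivative on $\partial\Om$.

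The crux is coercivity of $a$ on $V$. I would establish it with two ingredients. First, for $u\in V$ one has the identity
\[\|\Delta u\|_{\mL^2(\Om)}^2=\|\partial_x^2 u\|_{\mL^2(\Om)}^2+2\|\partial_x\partial_y u\|_{\mL^2(\Om)}^2+\|\partial_y^2 u\|_{\mL^2(\Om)}^2,\]
obtained by integrating the cross term $\int_\Om\partial_x^2 u\,\partial_y^2\bar u\,dx\,dy$ by parts twice: the boundary contributions on $y\in\{0,1\}$ vanish because $u=0$ on $\partial\Om$ yields $\partial_x u=0$ there, and those at $x\to\pm\infty$ vanish thanks to $\mH^2$ integrability, the identity being first established on smooth compactly supported test functions and then extended to $V$ by density. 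Second, the Poincaré-type estimate
\[\pi^4\,\|u\|_{\mL^2(\Om)}^2\le\|\partial_y^2 u\|_{\mL^2(\Om)}^2,\qquad u\in V,\]
follows by expanding $u(x,\cdot)$ in the Hilbert basis $(\theta_p)$ of $\mL^2(I)$ and using $\mu_1=\pi^2$. Combining the two bounds gives $\|\Delta u\|_{\mL^2(\Om)}^2\ge\pi^4\|u\|_{\mL^2(\Om)}^2$, and so for $k\in(0;\pi)$ both
\[a(u,u)\ge(1-k^4/\pi^4)\,\|\Delta u\|_{\mL^2(\Om)}^2\qquad\text{and}\qquad a(u,u)\ge(\pi^4-k^4)\,\|u\|_{\mL^2(\Om)}^2\]
hold, which together with standard Cauchy-Schwarz control of the first-order derivatives by $\|u\|_{\mL^2(\Om)}$ and the second-order derivatives yield $a(u,u)\ge c\,\|u\|_{\mH^2(\Om)}^2$ for some $c=c(k)>0$.

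Continuity of $a$ on $V$ and of the antilinear form $v\mapsto\langle f,v\rangle$ being immediate, the Lax-Milgram theorem produces a unique solution $u\in V$. The only mildly technical step, which I do not expect to be hard, is the density argument underlying the integration-by-parts identity on the unbounded strip; everything else is routine Hilbert-space reasoning. Finally, let me note that the radiation condition (RC) can be omitted from the statement because it is automatic when $k\in(0;\pi)$: by \eqref{general_form} every nontrivial mode of the homogeneous equation is of the form $e^{\pm\gamma_p x}\theta_p(y)$ with $\gamma_p>0$, hence grows exponentially at one of the two infinities, so the constraint $u\in\mH^2(\Om)$ already rules out incoming contributions and enforces the outgoing, exponentially decaying behaviour prescribed by (RC).
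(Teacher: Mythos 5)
Your proposal is correct and follows essentially the same route as the paper: recast variationally, establish the integration-by-parts identity $\|\Delta u\|^2_{\mL^2(\Om)}=|u|^2_{\mH^2(\Om)}$ on $\mH^1_0(\Om)\cap\mH^2(\Om)$, apply the transverse Poincar\'e inequality $\pi^4\|u\|^2_{\mL^2(\Om)}\le\|\partial_{yy}u\|^2_{\mL^2(\Om)}$, and conclude by Lax--Milgram. The one cosmetic difference is that you work directly with the reduced form $\int_\Om\Delta u\,\overline{\Delta v}-k^4\int_\Om u\,\overline{v}$ rather than the paper's $\nu$-weighted form $a(u,v)$ of \eqref{defa}, but the very identity you invoke shows the two coincide on $\mH^1_0(\Om)\cap\mH^2(\Om)$, so they produce the same natural boundary condition $\Delta u=0$ (equivalently $Mu=0$) and the same coercivity constant.
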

\begin{proof}
	Using the integration by parts formula given in Lemma \ref{IPP} and the Lax-Milgram theorem, one finds that proving the well-posedness of the problem amounts to showing the coercivity in $\mH_0^1(\Om) \cap \mH^2(\Om)$ of the sesquilinear form given by 
\[
a(u,v)=\int_{\Omega}\nu \Delta u \Delta \overline{v}\,dxdy + \int_{\Omega}(1-\nu) \bigg(\ders{u}{x}\ders{\overline{v}}{x} + 2\derd{u}{x}{y}\derd{\overline{v}}{x}{y} + \ders{u}{y}\ders{\overline{v}}{y} \bigg)\,dxdy-\int_{\Omega} k^4u  \overline{v}\,dxdy.
\]We can easily check that
\begin{equation}\label{eq:IPP}
\forall u \in \mH_0^1(\Om) \cap \mH^2(\Om),\quad \int_{\Omega}\derd{u}{x}{y}\derd{\overline{u}}{x}{y}\,dxdy=\int_{\Omega}\ders{u}{x}\ders{\overline{u}}{y}\,dxdy=\int_{\Omega}\ders{u}{y}\ders{\overline{u}}{x}\,dxdy
\end{equation}
where we have used that $u\in\mH^1_0(\Om)$ so $\partial_xu=0$ on $\partial\Om$ and that $\nu_x=0$ on $\partial\Om$. This implies that
\begin{equation}\label{eq:IPPbis}
\dsp\int_{\Omega} \left|\ders{u}{x}\right|^2+2\left|\derd{u}{x}{y}\right|^2+\left|\ders{u}{y}\right|^2\,dxdy=\dsp\int_{\Omega} |\Delta u |^2\,dxdy.
\end{equation}
By using successively (\ref{eq:IPPbis}) and the Poincar\'e inequality $\|u\|^2_{\mL^2(\Om)} \le \pi^{-4} \|\partial_{yy}u\|^2_{\mL^2(\Om)}$ for all $u\in\mH_0^1(\Om) \cap \mH^2(\Om)$, we can write
\[
\begin{array}{lcl}
a(u,u)&=&\dsp\int_{\Omega} |\Delta u |^2\,dxdy-\int_{\Omega} k^4|u|^2\,dxdy\\[10pt]
&\ge& \dsp\int_{\Omega} |\Delta u |^2\,dxdy-(k/\pi)^4\int_{\Omega} |\partial_{yy}u|^2\,dxdy\;\ge\;(1-(k/\pi)^4)\dsp\int_{\Omega} |\Delta u |^2\,dxdy\\[10pt]
&\ge&  \alpha\dsp\int_{\Omega}( |\Delta u |^2+|u|^2)\,dxdy.
\end{array}
\]
for some $\alpha>0$ since $k\in(0,\pi)$.
The identity (\ref{eq:IPPbis}) can be rewritten $|u|^2_{\mH^2(\Om)}=\|\Delta u\|^2_{\mL^2(\Om)}$. Moreover, we have
\[
\|\nabla u\|^2_{\mL^2(\Om)}=-\dsp\int_{\Omega} \Delta u\,\overline{u} \,dxdy\le (1/2)(\|\Delta u\|_{\mL^2(\Om)}^2+\| u\|_{\mL^2(\Om)}^2).
\]
Thus, for $k\in(0;\pi)$, there exists $\tilde{\alpha}>0$ such that $
a(u,u) \ge \tilde{\alpha}\,\|u\|^2_{\mH^2(\Om)},\;\text{for all }u\in \mH_0^1(\Om) \cap \mH^2(\Om).$
	\end{proof}
\noindent Now, for general $k\notin \mathbb{N}\pi$, we use the DtN operators we have constructed in the previous paragraph to derive a problem equivalent to (\ref{u_ref}) set in a bounded domain $\Omega_L:=(-L;L)\times(0;1)$. Here $L>0$ is chosen so that we have $\supp(f)\subset(-L;L)\times[0;1]$. In what follows, we set $\Gamma_L:=\partial\Om_L\setminus(\Sigma_L\cup\Sigma_{-L})$ (see Figure \ref{PictureBoundedDomains} left). Classical operations allow one to check that Problem (\ref{u_ref}) is equivalent to find $u \in\mH^2(\Omega_L)$ such that
\be
\left\{
\begin{array}{ccll}
\Delta^2 u -k^4 u& = & f  & \mbox{ \rm{in} }\Omega_L \\
u=Mu & = & 0 & \mbox{ \rm{on} }\Gamma_L  \\[3pt]
\bigg(\begin{array}{c}
Nu\\
Mu
\end{array}\bigg) & = & T_{\pm} \bigg(\begin{array}{c}
u\\
\partial_n u
\end{array}\bigg) & \mbox{ \rm{on} } \Sigma_{\pm L}.
\end{array}\right.
\label{bounded}
\ee
Let us give an equivalent variational formulation to Problem (\ref{bounded}). Define the Hilbert space $\mV_L:=\{u \in \mH^2(\Omega_L)\,|\,u=0\mbox{ \rm{on} }\Gamma_L\}$. 
We have the following integration by parts formula:
\begin{lemma}
\label{IPP}
For all $u\in V_L \cap H^4(\Omega_L)$ and for all $v \in V_L$
\[\int_{\Omega_L}\Delta^2 u\,v\,dxdy=a(u,v)-\int_{\Gamma_L}(Mu)\deri{\overline{v}}{n}\,ds-\int_{\Sigma_{\pm L}} \left((Nu)\overline{v}+(Mu) \deri{\overline{v}}{n}\right)\,ds,\]
where
\be a(u,v)=\int_{\Omega_L}\nu \Delta u \Delta \overline{v}\,dxdy + \int_{\Omega_L}(1-\nu) \left(\ders{u}{x}\ders{\overline{v}}{x} + 2\derd{u}{x}{y}\derd{\overline{v}}{x}{y} + \ders{u}{y}\ders{\overline{v}}{y} \right)\,dxdy.\label{defa}\ee
The above integration by parts formula is still valid for $u \in V_L$ such that $\Delta^2 u \in \mL^2(\Omega_L)$ provided
we interpret the integrals with the help of suitable duality brackets. In particular it is true if $(Mu)|_{\Gamma_L} \in \tilde{\mH}^{-1/2}(\Gamma_L)$, $(Mu)|_{\Sigma_{\pm L}} \in \tilde{\mH}^{-1/2}(\Sigma_{\pm L})$ and $(Nu)|_{\Sigma_{\pm L}} \in \mH^{-3/2}(\Sigma_{\pm L})$.
In this case the integral on $\Gamma_L$ has to be understood in the sense of duality pairing between
$\tilde{\mH}^{-1/2}(\Gamma_L)$ and $\mH^{1/2}(\Gamma_L)$
while the
integrals on $\Sigma_{\pm L}$ have the sense of duality pairing on the one hand between $\mH^{-3/2}(\Sigma_{\pm L})$ and $\tilde{\mH}^{3/2}(\Sigma_{\pm L})$ and on the other hand between $\tilde{\mH}^{-1/2}(\Sigma_{\pm L}) $ and $\mH^{1/2}(\Sigma_{\pm L})$.
\end{lemma}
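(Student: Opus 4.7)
The plan is to establish the identity first for smooth test functions by two applications of Green's formula, and then extend by density and duality. Assume first that $u,v\in \mathscr{C}^\infty(\overline{\Omega_L})$ with $v=0$ on $\Gamma_L$. Integrating by parts twice yields
\[
\int_{\Omega_L}\Delta^2 u\,\overline{v}\,dxdy
=\int_{\Omega_L}\Delta u\,\Delta\overline{v}\,dxdy
+\int_{\partial\Omega_L}\partial_n(\Delta u)\,\overline{v}\,ds
-\int_{\partial\Omega_L}\Delta u\,\partial_n\overline{v}\,ds.
\]
The next step is to convert the bulk integral $\int\Delta u\,\Delta\overline{v}$ into $a(u,v)$, which requires handling the ``torsion'' discrepancy $(1-\nu)\bigl(2\,\partial_{xy}u\,\partial_{xy}\overline{v}-\partial_{xx}u\,\partial_{yy}\overline{v}-\partial_{yy}u\,\partial_{xx}\overline{v}\bigr)$. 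A direct check shows that this integrand is the divergence of the vector field
\[
\bigl(\partial_{xy}u\,\partial_y\overline{v}-\partial_{yy}u\,\partial_x\overline{v},\;\partial_{xy}u\,\partial_x\overline{v}-\partial_{xx}u\,\partial_y\overline{v}\bigr),
\]
so the discrepancy turns into a boundary integral which, after expressing $\partial_x,\partial_y$ in the local $(n,s)$ frame, brings in tangential derivatives of $\overline{v}$.

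In the second step I would reorganize all the boundary contributions. Using $\nabla\overline{v}=(\partial_n\overline{v})\,\bfn+(\partial_s\overline{v})\,\boldsymbol{s}$ and the decomposition of $\Delta u$ as $\partial_{nn}u+\partial_{ss}u+$ lower order terms, the boundary integrals regroup exactly into
\[
-\int_{\partial\Omega_L}(Mu)\,\partial_n\overline{v}\,ds-\int_{\partial\Omega_L}(Nu)\,\overline{v}\,ds,
\]
where $Mu$ and $Nu$ are as in (\ref{defOpBord}); this is where the definitions of $M_0$ and $N_0$ precisely account for the tangential reshuffling. One integration by parts along $\partial\Omega_L$ is used here to move a tangential derivative off $\overline{v}$ in the term $(1-\nu)\,\partial_s N_0 u$, producing potential corner contributions at the four corners of $\Omega_L$. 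However, all four corners lie on $\Gamma_L$, and since $\overline{v}=0$ there, these corner terms vanish. Finally, the fact that $v=0$ on $\Gamma_L$ kills the $\int_{\Gamma_L}(Nu)\,\overline{v}\,ds$ contribution, leaving only $\int_{\Gamma_L}(Mu)\,\partial_n\overline{v}\,ds$ on the horizontal parts, as stated.

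The extension from $\mathscr{C}^\infty(\overline{\Omega_L})\cap V_L$ to $V_L\cap\mH^4(\Omega_L)$ follows by density, since each side of the identity is continuous for the topology of $\mH^4(\Omega_L)\times \mH^2(\Omega_L)$. For the last part of the lemma, when $u\in V_L$ only satisfies $\Delta^2 u\in\mL^2(\Omega_L)$, the traces of $Mu$ and $Nu$ on $\Gamma_L$, $\Sigma_{\pm L}$ are not defined by restriction but are characterized via the identity itself, interpreted as a definition through the duality pairings specified in the statement. The main (and standard) technical point here, and what I expect to be the most delicate step, is to verify that these pairings are well-defined: one must check that $\partial_n\overline{v}$ and $\overline{v}$ belong, respectively, to the trace spaces $\mH^{1/2}$ and $\tilde{\mH}^{3/2}$ (with the tilde versions accounting for vanishing at the corners between $\Gamma_L$ and $\Sigma_{\pm L}$), and that the corresponding linear forms on smooth functions extend continuously. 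This is done by approximating $u$ by smooth functions, using the already established identity on smooth functions, and passing to the limit using the continuity of all bulk terms together with the characterization $(\mH^j(\Sigma_{\pm L}))^\ast=\tilde{\mH}^{-j}(\Sigma_{\pm L})$ recalled in the text.
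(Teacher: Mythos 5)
Your proof is correct, but note that the paper itself does not prove Lemma \ref{IPP}: the remark immediately following it justifies the formula by citing \cite{hsiao_wendland} (for $\mathscr{C}^{1,1}$ domains) and \cite{grisvard} (for the corner compatibility on a polygon), so your route is genuinely different in that you reconstruct the argument from first principles. Your two computational pivots are both sound. The torsion discrepancy $(1-\nu)\big(2\,\partial_{xy}u\,\partial_{xy}\overline{v}-\partial_{xx}u\,\partial_{yy}\overline{v}-\partial_{yy}u\,\partial_{xx}\overline{v}\big)$ is indeed the divergence of the vector field you propose; a direct check shows the third-order cross-terms cancel. And the corner contributions created by integrating $\partial_s$ by parts along the polygonal boundary do vanish because $v$ vanishes at all four corners of $\Omega_L$, which sit on $\overline{\Gamma_L}$ where $v=0$ (recall $\mH^2(\Omega_L)\hookrightarrow\mathscr{C}^0(\overline{\Omega_L})$ in two dimensions). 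That second observation is precisely what the paper's remark is alluding to when it says the "compatibility conditions at corners" are satisfied by the chosen trace spaces: the $\tilde{\mH}^{j}(\Sigma_{\pm L})$ spaces enforce vanishing at the endpoints of $\Sigma_{\pm L}$, i.e.\ at the corners. Your approach makes explicit the mechanism by which the boundary condition $v|_{\Gamma_L}=0$ suppresses the corner forces that would otherwise appear on a polygon, at the cost of carrying out the full computation; the paper's citation route is shorter but leaves that mechanism implicit. One small imprecision: you say the corners "lie on $\Gamma_L$"; strictly they lie on $\overline{\Gamma_L}\cap\overline{\Sigma_{\pm L}}$, but the continuity of $v$ makes your conclusion unaffected.
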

\begin{remark}
The integration by parts formula given in Lemma \ref{IPP} is justified in \cite{hsiao_wendland} for $\mathscr{C}^{1,1}$ domains.
Note that our domain $\Omega_L$ is not $\mathscr{C}^{1,1}$ but is polygonal, which is why we need compatibility conditions at corners (see for example \cite{grisvard}).
Here, due do the chosen spaces for the traces of $u$ on the different edges, such compatibility conditions are satisfied.
\end{remark}
\noindent By Lemma \ref{IPP}, Problem (\ref{bounded}) is equivalent to the following variational formulation: find $u \in \mV_L$ such that
for all $v \in \mV_L$,
\be a(u,v)-k^4(u,v)_{\mL^2(\Omega_L)}-t(u,v)=\ell(v),\label{weak}\ee 
where
\be t(u,v)=\int_{\Sigma_{\pm L}}T_{\pm }\bigg(\begin{array}{c}
u\\
\partial_n u
\end{array}\bigg)\cdot \bigg(\begin{array}{c}
\overline{v}\\
\partial_n \overline{v}
\end{array}\bigg)\,d\sigma\label{deft}\qquad\mbox{ and }\qquad\ell(v)=\int_{\Omega_L} f\,\overline{v}\,dxdy.\ee
Define the linear and bounded operator $A^{\mrm{out}}:\mV_L\to\mV_L^{\ast}$ such that 
\begin{equation}\label{DefOperateursAout}
\langle A^{\mrm{out}} u,\overline{v}\rangle_{\Omega_L}=a(u,v)-k^4(u,v)_{\mL^2(\Omega_L)}-t(u,v),\qquad \forall (u,v)\in \mV_L\times\mV_L.
\end{equation}
Here $\langle \cdot,\cdot\rangle_{\Omega_L}$ refers to the bilinear duality pairing between $\mV_L^{\ast}$ and $\mV_L$. 
\\\\
Let us prove that the operator $A^{\mrm{out}}$ defined in (\ref{DefOperateursAout}) is Fredholm of index 0.
We first need the following Poincar\'e type lemma.
\begin{lemma}
There exists $c_0>0$ such that for all $v \in \mV_L$,
\[|v|_{\mH^2(\Omega_L)} \geq c_0\, \|v\|_{\mH^2(\Omega_L)},\]
where $|\cdot|_{\mH^2(\Omega_L)}$ and $\|\cdot\|_{\mH^2(\Omega_L)}$ stand for the semi-norm and the norm in $\mH^2(\Omega_L)$, respectively.
\label{poincare}
\end{lemma}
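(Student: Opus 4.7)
I propose to prove the lemma by a direct chain of Poincaré-type bounds, exploiting the fact that elements of $\mV_L$ vanish on the two horizontal edges $\Gamma_L=(-L;L)\times\{0,1\}$ of the rectangle (but not necessarily on the vertical edges $\Sigma_{\pm L}$). The target is to show
\[
\|v\|^2_{\mL^2(\Om_L)}+\|\partial_x v\|^2_{\mL^2(\Om_L)}+\|\partial_y v\|^2_{\mL^2(\Om_L)}\le C\,|v|^2_{\mH^2(\Om_L)}
\]
for some $C>0$ independent of $v\in\mV_L$, so that adding $|v|^2_{\mH^2(\Om_L)}$ on both sides yields the result.

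The plan proceeds in three steps. First, since $v(x,0)=v(x,1)=0$ for a.e. $x\in(-L;L)$, the one-dimensional Poincaré inequality applied in the $y$ variable (and integrated over $x$) gives $\|v\|_{\mL^2(\Om_L)}\le \pi^{-1}\|\partial_y v\|_{\mL^2(\Om_L)}$. Tangential differentiation along the horizontal edges yields $\partial_x v=0$ on $\Gamma_L$ as well, so the same 1D Poincaré argument applied to $\partial_x v$ gives $\|\partial_x v\|_{\mL^2(\Om_L)}\le \pi^{-1}\|\partial_y\partial_x v\|_{\mL^2(\Om_L)}\le \pi^{-1}|v|_{\mH^2(\Om_L)}$.

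Second, to bootstrap from $\|v\|\le C\|\partial_y v\|$ to a bound in terms of $|v|_{\mH^2}$, I would integrate by parts in $y$: since $v$ vanishes at $y=0,1$, the boundary terms disappear and
\[
\|\partial_y v\|^2_{\mL^2(\Om_L)}=-\int_{\Om_L} v\,\overline{\partial_{yy}v}\,dxdy\le \|v\|_{\mL^2(\Om_L)}\,\|\partial_{yy}v\|_{\mL^2(\Om_L)}.
\]
Combining with the first step, $\|v\|^2_{\mL^2(\Om_L)}\le \pi^{-2}\|v\|_{\mL^2(\Om_L)}|v|_{\mH^2(\Om_L)}$, hence $\|v\|_{\mL^2(\Om_L)}\le \pi^{-2}|v|_{\mH^2(\Om_L)}$. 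Plugging this back into the previous display yields $\|\partial_y v\|^2_{\mL^2(\Om_L)}\le \pi^{-2}|v|^2_{\mH^2(\Om_L)}$.

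Third, I combine all the above estimates. This controls $\|v\|_{\mL^2}$, $\|\partial_x v\|_{\mL^2}$ and $\|\partial_y v\|_{\mL^2}$ by $|v|_{\mH^2(\Om_L)}$, and the conclusion follows by taking $c_0:=(1+3\pi^{-4})^{-1/2}$ or any comparable constant. There is no real obstacle here — the only mild subtlety is the control of $\|\partial_y v\|$, which cannot come from a direct Poincaré inequality because $\partial_y v$ need not vanish on $\Gamma_L$, and must be obtained through the integration-by-parts trick used in step two. Alternatively, one could argue by contradiction and compactness: a sequence $v_n\in\mV_L$ with $\|v_n\|_{\mH^2}=1$ and $|v_n|_{\mH^2}\to 0$ would converge in $\mH^1(\Om_L)$ (Rellich) to an affine function vanishing on $\Gamma_L$, hence to zero, contradicting $\|v_n\|_{\mH^2}=1$; but the direct route above gives the explicit constant and is equally short.
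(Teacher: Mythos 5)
Your proof is correct, and it takes a genuinely different route from the paper's. The paper proves the lemma by contradiction and compactness: a normalized sequence $(v_n)\subset\mV_L$ with $|v_n|_{\mH^2(\Omega_L)}\to0$ has a subsequence converging strongly in $\mH^1(\Omega_L)$ (Rellich), is therefore Cauchy in $\mH^2(\Omega_L)$, and its limit is an affine function $ax+by+c$ vanishing on $\Gamma_L$, hence zero, contradicting the normalization --- exactly the alternative you sketch in your closing sentence. Your main argument is instead direct and quantitative: the fiberwise Poincar\'e inequality in $y$ (using $v=0$ on $\Gamma_L$, and $\partial_x v=0$ on $\Gamma_L$ obtained by tangential differentiation of the trace, the same fact the paper invokes in the proof of Proposition \ref{RmkBelowThreshold}), combined with the integration-by-parts bound $\|\partial_y v\|^2_{\mL^2(\Omega_L)}\le\|v\|_{\mL^2(\Omega_L)}\|\partial_{yy}v\|_{\mL^2(\Omega_L)}$, whose boundary terms vanish because $v=0$ at $y=0,1$, controls $\|v\|_{\mL^2(\Omega_L)}$, $\|\partial_x v\|_{\mL^2(\Omega_L)}$ and $\|\partial_y v\|_{\mL^2(\Omega_L)}$ by $|v|_{\mH^2(\Omega_L)}$. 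All steps are valid; the only blemish is the final bookkeeping, since your estimates actually give $c_0=(1+2\pi^{-2}+\pi^{-4})^{-1/2}=(1+\pi^{-2})^{-1}$ rather than $(1+3\pi^{-4})^{-1/2}$, which is harmless because only the existence of some $c_0>0$ matters. As for what each approach buys: yours yields an explicit constant and avoids compactness, but it exploits the product structure of the rectangle $\Omega_L$ (one-dimensional slices in $y$ on which $v$ vanishes at both endpoints); the paper's compactness proof, though non-constructive, transfers verbatim to the space $\mW_L$ on the perturbed domain $D_L$ --- which the paper needs for Theorem \ref{thmIsomObstacleSimply} and again in the clamped section --- whereas there your fiberwise Poincar\'e and the integration by parts in $y$ would pick up contributions from $\partial\mathscr{O}$ and from disconnected vertical slices, and would have to be reworked.
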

\begin{proof}
By contradiction, assume that for all $n \in \mathbb{N}^\ast$, there exists some $v_n \in \mV_L$ such that
\[|v_n|_{\mH^2(\Omega_L)} \leq \frac{1}{n} \|v_n\|_{\mH^2(\Omega_L)}.\]
Setting $u_n =v_n/\|v_n\|_{\mH^2(\Omega_L)}$, we obtain that
\[|u_n|_{\mH^2(\Omega_L)} \leq \frac{1}{n}\qquad\mbox{ and }\qquad \|u_n\|_{\mH^2(\Omega_L)}=1.\]
We conclude that there exists some subsequence of $(u_n)$, still denoted $(u_n)$, such that 
\[u_n \rightharpoonup u \quad{\rm in} \quad \mH^2(\Omega_L)\qquad\mbox{ and }\qquad  u_n \rightarrow u \quad{\rm in} \quad \mH^1(\Omega_L).\] 
Hence, $(u_n)$ is a Cauchy sequence in $\mH^2(\Omega_L)$, that is $(u_n)$ converges to some $w \in \mH^2(\Omega_L)$, which coincides with $u$.
Then $u_n \rightarrow u$ in $\mH^2(\Omega_L)$, which then satisfies $|u|_{\mH^2(\Omega_L)}=0$. In other words, all the second derivatives of $u$ vanish. Therefore, we get $u(x,y)=ax+by+c$ for some constants $a,b,c$.
From the boundary condition in the space $\mV_L$, we have $u(x,0)=0$ and $u(x,1)=0$ for all $x \in(-L;L)$, hence $a=b=c=0$, 
that is $u=0$. We obtain a contradiction with $ \|u\|_{\mH^2(\Omega_L)}=1$.
\end{proof}
\noindent We also need the following lemma.
\begin{lemma}\label{ret}
There exists $c_1>0$ such that for all $u \in \mV_L$,
\begin{equation}\label{estimRealPart}
-{\rm Re}\,t(u,u) \geq - c^2_1 \|u\|^2_{\mL^2(\Sigma_{\pm L})}.\end{equation}
\end{lemma}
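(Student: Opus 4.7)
The plan is to reduce the claim to a pointwise (in the mode index $p$) matrix estimate using the diagonalization of $T_{\pm}$ in the basis $(\theta_p)$ established in \S\ref{paragraphDtN}. Writing $u|_{\Sigma_{\pm L}}=\sum_p g_p^{\pm}\theta_p$ and $\pm\partial_x u|_{\Sigma_{\pm L}}=\sum_p h_p^{\pm}\theta_p$, the orthonormality of the $(\theta_p)$ in $\mL^2(I)$ yields
\[
t(u,u)\;=\;\sum_{\pm}\sum_{p=1}^{+\infty}T_p\begin{pmatrix}g_p^{\pm}\\ h_p^{\pm}\end{pmatrix}\cdot\overline{\begin{pmatrix}g_p^{\pm}\\ h_p^{\pm}\end{pmatrix}},
\]
with $T_p$ the $2\times 2$ matrix in (\ref{tprop}). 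Convergence of this series is not an issue: for $u\in\mV_L$ the sequences $(g_p^{\pm})$ and $(h_p^{\pm})$ live in weighted $\ell^2$ spaces with weights $p^{3}$ and $p$ (trace spaces $\mH^{3/2}$ and $\mH^{1/2}$), while the entries of $T_p$ grow only polynomially. It then suffices to prove that, for $c_1>0$ large enough, the real quadratic form
$q_p(g,h):=-\mrm{Re}\,(T_p\vec v\cdot\overline{\vec v})+c_1^{\,2}|g|^2$ is non-negative for every $p\ge 1$, with $\vec v=(g,h)^{\top}$.

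A direct computation using (\ref{tprop}) gives, in the propagating regime $p\le n$ (where $\eta_p\in\R_+$),
\[
-\mrm{Re}(T_p\vec v\cdot\overline{\vec v})\;=\;-\gamma_p\eta_p^{\,2}|g|^2+2\nu\mu_p\mrm{Re}(h\overline{g})+\gamma_p|h|^2,
\]
and, in the evanescent regime $p>n$ (where $\eta_p=i\tilde\eta_p$ with $\tilde\eta_p>0$, so $T_p$ is real symmetric),
\[
-\mrm{Re}(T_p\vec v\cdot\overline{\vec v})\;=\;\gamma_p\tilde\eta_p(\gamma_p+\tilde\eta_p)|g|^2+2(\gamma_p\tilde\eta_p+\nu\mu_p)\mrm{Re}(h\overline{g})+(\gamma_p+\tilde\eta_p)|h|^2.
\]
In both cases $q_p$ is represented by a real symmetric $2\times 2$ matrix whose $(2,2)$-entry is already strictly positive ($\gamma_p$ or $\gamma_p+\tilde\eta_p$), so positive semi-definiteness of $q_p$ reduces to verifying non-negativity of the $(1,1)$-entry and of the determinant.

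The core algebraic step concerns the evanescent determinant at $c_1=0$:
\[
\gamma_p\tilde\eta_p(\gamma_p+\tilde\eta_p)^2-(\gamma_p\tilde\eta_p+\nu\mu_p)^2\;=\;(1-\nu)\bigl[2\mu_p\gamma_p\tilde\eta_p+(1+\nu)\mu_p^{\,2}\bigr]-k^4,
\]
which follows from the identities $\gamma_p^{\,2}+\tilde\eta_p^{\,2}=2\mu_p$ and $(\gamma_p\tilde\eta_p)^2=\mu_p^{\,2}-k^4$. Since $\nu\in[0;1)$ and $\mu_p\gamma_p\tilde\eta_p\sim\pi^4p^4\to+\infty$, this quantity is strictly positive for all $p$ beyond some explicit threshold $p_0$, and $q_p$ is then already non-negative there without any $c_1$ contribution. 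For the finite set of remaining indices (propagating modes $p\le n$ together with evanescent modes $n<p<p_0$), adding $c_1^{\,2}$ to the $(1,1)$-entry strictly increases both that entry and the determinant, so a single $c_1$ large enough ensures positivity of all these matrices simultaneously; summing modewise and over the signs $\pm$ then gives $-\mrm{Re}\,t(u,u)+c_1^{\,2}\|u\|^2_{\mL^2(\Sigma_{\pm L})}\ge 0$, which is the stated inequality. The principal obstacle is precisely the algebraic identity for the evanescent determinant: the appearance of the factor $(1-\nu)$ is what guarantees that the dominant term is non-negative, and the strict inequality $\nu<1$ is used in an essential way.
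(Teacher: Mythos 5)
Your proof is correct and follows essentially the same route as the paper: modewise reduction of $t(u,u)$ in the basis $(\theta_p)$, separate treatment of propagating and evanescent indices, and a single large constant $c_1$ absorbing the finitely many problematic modes. The only difference is in the evanescent tail, where you compute the $2\times2$ determinant exactly (your identity is correct) and note it is eventually positive, while the paper completes the square with $c_p=(\gamma_p\beta_p+\nu\mu_p)/(\gamma_p+\beta_p)\le\tfrac12(\gamma_p+\beta_p)$ to get the uniform bound $v_p\ge -k^3|g_p|^2$ for every evanescent mode; both arguments rest on the same relations $\gamma_p^2+\beta_p^2=2\mu_p$ and $\gamma_p^2\beta_p^2=\mu_p^2-k^4$.
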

\begin{proof}
Let $u$ be an element of $\mV_L$. Using the obvious decompositions $t=t_+ + t_-$ and $(u,\partial_{x} u)=\sum_{p} (g_p,h_p)\theta_p$ on $\Sigma_{+L}$, we find
\[
\begin{array}{lcl}
\dsp t_+(u,u) &= &\dsp \sum_{p=1}^{+\infty} T_p\bigg(\begin{array}{c}
g_p\\
h_p
\end{array}\bigg)\cdot (\overline{g_p},\overline{h_p}) \\
\dsp &=& \dsp \sum_{p=1}^{+\infty}\left\{i\gamma_p\eta_p(\gamma_p-i\eta_p) |g_p|^2 -(\gamma_p-i\eta_p) |h_p|^2 
+2(i\gamma_p \eta_p-\nu \mu_p){\rm Re}(g_p \overline{h_p})\right\}.
\end{array}\]
Assume that $k\in(n\pi;(n+1)\pi)$ with $n\in\N^{\ast}$ (the case $k\in(0;\pi)$, simpler to study, is left to the reader). Since $\eta_p=\sqrt{k^2-\pi^2p^2}$ (see (\ref{etan_gamman})), we observe that for $p=1,\dots,n$, the number $\eta_p$ is purely real, while for $p \geq n+1$, we have $\eta_p=i\beta_p$ with $\beta_p=\sqrt{\pi^2p^2-k^2}\in\R$. Hence
\[
\begin{array}{lcl}
\dsp t_+(u,u) &=& \dsp \sum_{p=1}^{n}\left\{i\gamma_p\eta_p(\gamma_p-i\eta_p) |g_p|^2 -(\gamma_p-i\eta_p) |h_p|^2 
+2(i\gamma_p \eta_p-\nu \mu_p){\rm Re}(g_p \overline{h_p})\right\}\\
\dsp && -\dsp \sum_{p \geq n+1} \left\{\gamma_p\beta_p(\gamma_p+\beta_p) |g_p|^2 + (\gamma_p + \beta_p) |h_p|^2 + 2(\gamma_p \beta_p +\nu \mu_p){\rm Re}(g_p\overline{h_p})\right\}.
\end{array}
\]
We show that
\begin{equation}\label{decompoRe}
-{\rm Re}\, t_+(u,u)= \sum_{p=1}^{n} u_p +  \sum_{p=n+1}^{+\infty} v_p.
\end{equation}
where 
\[\left\{
\begin{array}{lcl}
\dsp u_p &:=&\dsp -\gamma_p \eta_p^2 |g_p|^2 + \gamma_p |h_p|^2 +2\nu \mu_p {\rm Re}(g_p \overline{h_p})\\[6pt]
\dsp v_p &:=& \dsp \gamma_p\beta_p(\gamma_p+\beta_p) |g_p|^2 + (\gamma_p + \beta_p) |h_p|^2 + 2(\gamma_p \beta_p +\nu \mu_p){\rm Re}(g_p\overline{h_p})
\end{array}\right..\]
Since we have $2\nu \mu_p {\rm Re}(g_p \overline{h_p}) \geq -\gamma_p |h_p|^2 - \gamma_p^{-1}\nu^2 \mu_p^2 |g_p^2|$, we deduce $u_p \geq -(\gamma_p \eta_p^2  + \gamma_p^{-1}\nu^2 \mu_p^2 )|g_p|^2$. Therefore, for $p=1,\dots,n$, we obtain
\begin{equation}\label{decompoRe1}
u_p \geq -c^2_1 |g_p|^2
\end{equation}
for some constant $c_1>0$. On the other hand, we can write
\[v_p =(\gamma_p+\beta_p)(\gamma_p\beta_p |g_p|^2 + |h_p|^2 +2c_p {\rm Re}(g_p \overline{h_p}))\quad\mbox{ with }\quad c_p:=(\gamma_p\beta_p + \nu \mu_p)/(\gamma_p + \beta_p).\]
This gives $v_p \geq (\gamma_p + \beta_p)(\gamma_p\beta_p |g_p|^2 + |h_p|^2- c_p^2|g_p|^2-|h_p|^2)=(\gamma_p+\beta_p)(\gamma_p\beta_p -c_p^2) |g_p|^2$. Using the fact that $\gamma_p^2=k^2+\mu_p$, $\beta_p^2=\mu_p-k^2$ and $\nu \in [0;1)$, we find
\[c_p\leq \frac{\gamma_p\beta_p+ \mu_p}{\gamma_p+\beta_p} = \frac{\gamma_p\beta_p+(\gamma_p^2+\beta_p^2)/2}{\gamma_p+\beta_p}=\frac{1}{2}(\gamma_p+\beta_p).\]
Hence, we get $\gamma_p\beta_p -c_p^2 \geq \gamma_p\beta_p -\frac{1}{4}(\gamma_p+\beta_p)^2=-\frac{1}{4}(\gamma_p-\beta_p)^2$, and so 
\[v_p \geq -\frac{1}{4}(\gamma_p+\beta_p)(\gamma_p-\beta_p)^2 |g_p|^2=- \frac{k^4}{\gamma_p+\beta_p} |g_p|^2 \geq -k^3|g_p|^2.\]
where the last inequality is due to $\gamma_p>k$ and $\beta_p>0$.
As a consequence, there is a constant $c_1>0$ such that for $p\geq n+1$, there holds
\begin{equation}\label{decompoRe2}
v_p \geq -c^2_1 |g_p|^2.
\end{equation}
Using (\ref{decompoRe1}) and (\ref{decompoRe2}) in (\ref{decompoRe}), we get $-{\rm Re}\, t_+(u,u)\ge -c^2_1\sum_{p=1}^{\infty}|g_p|^2=-c_1\|u\|^2_{\mL^2(\Sigma_{L})}$. Working analogously with $-{\rm Re}\, t_-(u,u)$, we obtain the desired result (\ref{estimRealPart}).
\end{proof}
\noindent Let us now state the main result of this section.
\begin{theorem}
\label{th_ref}
Assume that $k\in (n\pi;(n+1)\pi)$ with $n \in \mathbb{N}$. The operator $A^{\mrm{out}}$ defined in \eqref{DefOperateursAout} is an isomorphism. As a consequence, for any compactly supported function $f \in \mL^2(\Omega)$, Problem (\ref{u_ref}) has a unique solution in $\mH^2_{\rm loc}(\Omega)$.
\end{theorem}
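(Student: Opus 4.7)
The plan is to establish the isomorphism property of $A^{\mrm{out}}$ in two main steps: first prove that $A^{\mrm{out}}$ is Fredholm of index zero via a G\r{a}rding inequality, then verify its injectivity using the modal decomposition (\ref{general_form}) together with the radiation conditions.

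For Fredholmness, I would show that there exist constants $\alpha,\,C>0$ such that
\[
\mrm{Re}\,\langle A^{\mrm{out}} u,\overline{u}\rangle_{\Omega_L}+C\,\|u\|_{\mL^2(\Omega_L)}^2\ \geq\ \alpha\,\|u\|_{\mH^2(\Omega_L)}^2,\qquad \forall u\in\mV_L.
\]
From (\ref{defa}), since $\nu\in[0;1)$ and $\|\Delta u\|_{\mL^2(\Omega_L)}^2\geq 0$, one directly has $a(u,u)\geq(1-\nu)|u|^2_{\mH^2(\Omega_L)}$, which Lemma \ref{poincare} upgrades to $a(u,u)\geq(1-\nu)c_0^2\|u\|^2_{\mH^2(\Omega_L)}$. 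The DtN contribution is controlled by Lemma \ref{ret}, which yields $-\mrm{Re}\,t(u,u)\geq -c_1^2\|u\|^2_{\mL^2(\Sigma_{\pm L})}$. Since the trace $\mH^2(\Omega_L)\to\mL^2(\Sigma_{\pm L})$ is compact (continuous $\mH^2\to\mH^{3/2}$ composed with the compact embedding $\mH^{3/2}\hookrightarrow\mL^2$ on the bounded interval), an Ehrling-type argument gives, for every $\varepsilon>0$, a constant $C_\varepsilon$ such that $\|u\|^2_{\mL^2(\Sigma_{\pm L})}\leq\varepsilon\|u\|^2_{\mH^2(\Omega_L)}+C_\varepsilon\|u\|^2_{\mL^2(\Omega_L)}$. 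Taking $\varepsilon$ small enough produces the G\r{a}rding estimate. Combined with the Rellich embedding $\mH^2(\Omega_L)\hookrightarrow\mL^2(\Omega_L)$, this implies that $A^{\mrm{out}}$ is Fredholm of index zero.

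For injectivity, assume $A^{\mrm{out}} u=0$ with $u\in\mV_L$. Using Lemma \ref{IPP}, $u$ solves the homogeneous version of (\ref{bounded}). I then extend $u$ beyond $\Sigma_{\pm L}$ by the rightgoing and leftgoing functions in the sense of (\ref{eq:rightgoing})--(\ref{eq:leftgoing}), determined by the Dirichlet data $(u,\pm\partial_x u)|_{\Sigma_{\pm L}}$; the very definition of $T_\pm$ ensures that the $(N,M)$-traces of these extensions agree with $(Nu,Mu)|_{\Sigma_{\pm L}}$. The matching of Dirichlet data guarantees $\tilde u\in\mH^2_{\mrm{loc}}(\Omega)$, while the matching of $(N,M)$-traces yields $\Delta^2 \tilde u-k^4\tilde u=0$ across $\Sigma_{\pm L}$. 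Thus $\tilde u$ satisfies the homogeneous version of (\ref{u_ref}) and, by (\ref{general_form}), admits a modal expansion on the whole strip $\Omega$. Uniqueness of that expansion combined with the outgoing behaviour forces $b_p=d_p=0$ (from the rightgoing form at $+\infty$) and $a_p=c_p=0$ (from the leftgoing form at $-\infty$) for all $p\in\N^\ast$, hence $\tilde u\equiv 0$ and in particular $u\equiv 0$ on $\Omega_L$.

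The combination of Fredholmness of index zero and injectivity yields that $A^{\mrm{out}}$ is an isomorphism. For a compactly supported $f\in\mL^2(\Omega)$, choosing $L$ so that $\supp f\subset(-L;L)\times[0;1]$, the linear form $\ell$ in (\ref{weak}) belongs to $\mV_L^*$, so (\ref{weak}) admits a unique solution $u\in\mV_L$; extending $u$ outside $\Omega_L$ via the rightgoing/leftgoing representation underlying $T_\pm$ produces the unique solution of (\ref{u_ref}) in $\mH^2_{\mrm{loc}}(\Omega)$. The delicate point of the argument is the G\r{a}rding inequality: the matrices $T_p$ in (\ref{tprop}) are not Hermitian for propagating modes, so $\mrm{Re}\,t$ cannot be analysed mode-by-mode in an obvious way. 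Lemma \ref{ret} is precisely tailored to overcome this difficulty, by confining the potential loss of coercivity to the lower-order trace norm, which can then be absorbed by compact-trace arguments.
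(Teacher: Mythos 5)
Your proposal is correct and follows essentially the same strategy as the paper: coercivity of the principal part via Lemmas \ref{poincare} and \ref{ret}, Fredholm index zero by compact perturbation, and injectivity via the modal expansion (\ref{general_form}) and the outgoing condition. The only superficial difference is that you package the boundary term through an Ehrling absorption into a G\r{a}rding inequality, whereas the paper decomposes $A^{\mrm{out}}=A_0+A_c$ with $A_0$ coercive (by adding $c_1^2(u,v)_{\mL^2(\Sigma_{\pm L})}$ to cancel the loss from Lemma \ref{ret}) and $A_c$ compact; the two formulations are equivalent.
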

\begin{proof}
Let us decompose the operator $A^{\mrm{out}}$ defined in \eqref{DefOperateursAout} as
\[
	A^{\mrm{out}}=A_0+A_c
\]
with$\qquad\left\{\begin{array}{lcl}
\dsp \langle A_0 u,\overline{v}\rangle_{\Omega_L} &=& \dsp a(u,v)-t(u,v) + c^2_1 (u,v)_{\mL^2(\Sigma_{\pm L})}\\[3pt]
\dsp \langle A_c u,\overline{v}\rangle_{\Omega_L} &=& \dsp -k^4(u,v)_{\mL^2(\Omega_L)}-c^2_1 (u,v)_{\mL^2(\Sigma_{\pm_L})},
\end{array}\right.\quad\forall u,\,v\in \mV_L.$\\[5pt]
\newline
From Lemma \ref{poincare} and Lemma \ref{ret}, we have, for all $u \in \mV_L$,
\[{\rm Re}\langle A_0 u,\overline{u}\rangle_{\Omega_L} \geq a(u,u) \geq (1-\nu)|u|^2_{\mH^2(\Omega_L)} \geq c^2_0(1-\nu)\|u\|^2_{\mH^2(\Omega_L)}.\]
Due to Lax-Milgram theorem, the operator $A_0$ is an isomorphism. Since the operator $A_c$ is compact, we conclude that the operator $A^{\mrm{out}}$ is Fredholm of index 0. In particular, injectivity implies surjectivity.
It remains to prove injectivity. By definition of the operator $A^{\mrm{out}}$ and since we have equivalence between problems (\ref{u_ref}) and (\ref{bounded}), any element $u$ of ker $A^{\mrm{out}}$  satisfies Problem (\ref{u_ref}) with $f=0$, that is in particular Problem (\ref{modes}).
The solutions to that problem are given by (\ref{general_form}). The radiation conditions (RC) eventually imply that $u=0$.
\end{proof}
\subsection{Source term problem in the perturbed strip with radiation conditions}
Let us now address the source term Problem (\ref{u_model}) (with a hole) when $C=M$. We remind the reader that this problem states, for a compactly supported function $f \in \mL^2(D)$, find $u$ in $\mH^2_{\rm loc}(D)$ such that
\be
\left\{
\begin{array}{rcll}
\Delta^2 u -k^4 u& = & f  & \mbox{ \rm{in} }D \\
u =Mu & = & 0 & \mbox{ \rm{on} }\partial \Omega \\
Mu=Nu & = & 0 & \mbox{ \rm{on} } \partial \mathscr{O} \\
\multicolumn{4}{c}{u \mbox{ \rm{satisfies} } \mrm{(RC)}.} 
\end{array}\right.
\label{u_pert}
\ee
Again, we assume that $k\in (n\pi;(n+1)\pi)$ with $n \in \mathbb{N}$ (note that when $k\in(0;\pi)$, using the result of Proposition \ref{RmkBelowThreshold}, 
one can prove Fredholmness of (\ref{u_pert}) in $\{u \in \mH^2(D)\,|\, u=0\mbox{ \rm{on} }\partial\Om\}$). We define the domain $D_L:=\{(x,y)\in D\,|\,|x|<L\}$ where $L$ is chosen large enough so that both the hole $\mathscr{O}$ and $f$ are supported in $D_L$ (see Figure \ref{PictureBoundedDomains} right). We use the DtN operators $T_{\pm}$ defined in \S\ref{paragraphDtN}. Problem (\ref{u_pert}) is equivalent to finding $u \in\mH^2(D_L)$ such that
\be
\left\{
\begin{array}{ccll}
\Delta^2 u -k^4 u& = & f  & \mbox{ \rm{in} }D_L \\
u = Mu & = & 0 & \mbox{ \rm{on} }\Gamma_L \\
Mu = Nu & = & 0 & \mbox{ \rm{on} } \partial \mathscr{O} \\[3pt]
\bigg(\begin{array}{c}
Nu\\
Mu
\end{array}\bigg) & = & T_{\pm } \bigg(\begin{array}{c}
u\\
\partial_n u
\end{array}\bigg) & \mbox{ \rm{on} } \Sigma_{\pm L}.
\end{array}\right.
\label{bounded_pert}
\ee
We now introduce a variational formulation of (\ref{bounded_pert}) exactly as we did in the reference strip. First we define the Hilbert space $\mW_L:=\{u \in \mH^2(D_L)\,|\, u=0\mbox{ \rm{on} }\Gamma_L\}$. 
Problem (\ref{bounded_pert}) is equivalent to the variational formulation: find $u \in \mW_L$ such that
for all $v \in \mW_L$,
\be b(u,v)-k^4(u,v)_{\mL^2(D_L)}-t(u,v)=m(v).\label{weak_pert}\ee 
Here $t$ is defined in (\ref{deft}) while the sesquilinear (resp. antilinear) form $b$ (resp. $m$) is the analogous of $a$ (resp. $\ell$) defined in (\ref{defa}) (resp. (\ref{deft})) with $\Omega_L$ replaced by $D_L$. Define the linear and bounded operator $B^{\mrm{out}}:\mW_L\to\mW_L^{\ast}$ such that 
\begin{equation}\label{DefOperateursSimply}
\langle B^{\mrm{out}} u,\overline{v}\rangle_{D_L}=b(u,v)-k^4(u,v)_{\mL^2(D_L)}-t(u,v),\qquad \forall (u,v)\in\mW_L\times\mW_L.
\end{equation}
Here $\langle \cdot,\cdot\rangle_{D_L}$ refers to the bilinear duality pairing between $\mW_L^{\ast}$ and $\mW_L$. Working as in the proof of Theorem \ref{th_ref} (in Lemma \ref{poincare}, replace the space $\mV_L$ by $\mW_L$) and using the Fredholm theory, we obtain the main result of this section. 
\begin{theorem}\label{thmIsomObstacleSimply}
Assume that $k\in (n\pi;(n+1)\pi)$ with $n \in \mathbb{N}$. The operator $B^{\mrm{out}}$ defined in (\ref{DefOperateursSimply}) is Fredholm of index zero. As a consequence,\\[3pt]
$a)$ If $\ker\,B^{\mrm{out}}=\{0\}$, then $B^{\mrm{out}}$ is an isomorphism.\\[3pt]
$b)$ If $\ker\,B^{\mrm{out}}=\mrm{span}(z_1,\dots,z_d)$ for some $d\ge1$, then the equation $B^{\mrm{out}}u=F\in\mW_L^{\ast}$ admits a solution (defined up to an element of $\ker\,B^{\mrm{out}}$) if and only if $F$ satisfies the compatibility conditions $\langle F,\overline{z_j}\rangle_{D_L}=0$ for $j=1,\dots,d$.
\end{theorem}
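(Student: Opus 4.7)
The plan is to mimic the proof of Theorem \ref{th_ref} on the perturbed truncated domain $D_L$. Following the same splitting, I would write $B^{\mrm{out}}=B_0+B_c$ with
\[
\langle B_0 u,\overline{v}\rangle_{D_L}=b(u,v)-t(u,v)+c_1^2(u,v)_{\mL^2(\Sigma_{\pm L})},\qquad \langle B_c u,\overline{v}\rangle_{D_L}=-k^4(u,v)_{\mL^2(D_L)}-c_1^2(u,v)_{\mL^2(\Sigma_{\pm L})},
\]
where $c_1$ is the constant from Lemma \ref{ret}. The operator $B_c$ is compact because the embeddings $\mH^2(D_L)\hookrightarrow\mL^2(D_L)$ and $\mH^2(D_L)\hookrightarrow\mL^2(\Sigma_{\pm L})$ (continuous trace followed by one-dimensional Rellich) are compact. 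The whole task therefore reduces to proving coercivity of $B_0$.

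Three estimates must combine. First, $b(u,u)\ge(1-\nu)|u|^2_{\mH^2(D_L)}$ follows directly from the definition of the form since the $\nu\int_{D_L}|\Delta u|^2$ contribution is non-negative. Second, Lemma \ref{ret} applies without modification: the estimate it yields depends only on the Fourier coefficients of $(u,\partial_xu)|_{\Sigma_{\pm L}}$ in the basis $(\theta_p)$, and is therefore insensitive to the presence of the hole. Third, one needs the analogue of Lemma \ref{poincare} on $\mW_L$: the existence of $c_0>0$ such that $|v|_{\mH^2(D_L)}\ge c_0\|v\|_{\mH^2(D_L)}$ for all $v\in\mW_L$. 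This is established by replaying the compactness-contradiction argument of Lemma \ref{poincare} verbatim; the only point requiring a remark is that a limit whose second derivatives all vanish must take the form $u(x,y)=ax+by+c$ on the \emph{entire} $D_L$. This in turn uses that $D_L$ is connected, which comes from $\overline{\mathscr{O}}\subset\Omega$ together with the (implicit) assumption that $\mathscr{O}$ has no internal holes; the trace condition $u=0$ on $\Gamma_L$ then forces $a=b=c=0$.

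Putting the three estimates together yields $\mrm{Re}\,\langle B_0u,\overline{u}\rangle_{D_L}\ge c_0^2(1-\nu)\|u\|^2_{\mH^2(D_L)}$, so $B_0$ is an isomorphism by Lax-Milgram and $B^{\mrm{out}}=B_0+B_c$ is Fredholm of index zero. Statement $(a)$ follows since injectivity plus zero index implies bijectivity. Statement $(b)$ is the standard Fredholm alternative: the range of $B^{\mrm{out}}$ is the annihilator of the kernel of its transpose, and the symmetry $T_p=T_p^{\top}$ of the DtN matrices in (\ref{tprop}) combined with the Hermitian character of $b$ and of $(\cdot,\cdot)_{\mL^2(D_L)}$ allows one to identify this kernel, up to the expected conjugation, with $\mrm{span}(\overline{z_1},\dots,\overline{z_d})$, producing the stated compatibility conditions. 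The only novelty compared with the proof of Theorem \ref{th_ref} is the topological verification behind the Poincar\'e inequality, which is where I expect the main (albeit minor) obstacle to lie.
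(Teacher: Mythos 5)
Your proof is correct and mirrors the paper's (very terse) argument: the paper simply instructs the reader to repeat the proof of Theorem~\ref{th_ref} with Lemma~\ref{poincare} adapted from $\mV_L$ on $\Omega_L$ to $\mW_L$ on $D_L$, and to invoke Fredholm theory. The details you supply — the compactness of $B_c$, the fact that Lemma~\ref{ret} is insensitive to the hole, the Poincar\'e inequality on the connected perturbed domain, and the cokernel identification via the complex-symmetry of the sesquilinear form (which comes from $T_p=T_p^{\top}$ and the realness of the coefficients in $b$ and $(\cdot,\cdot)_{\mL^2}$, rather than Hermiticity per se) — are all on target.
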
 
\begin{remark}\label{RmqExistenceUniqueness}
From Theorem \ref{thmIsomObstacleSimply}, we deduce that if Problem (\ref{u_pert}) for $f=0$ has only the zero solution in $\mH_{\rm loc}^2(D)$, then Problem (\ref{u_pert}) has a unique solution in $\mH_{\rm loc}^2(D)$ for any $f \in \mL^2(D)$ which is compactly supported. 
\end{remark}
\begin{remark}\label{RmqTrappedModes}
Assume that $u \in \mH_{\rm loc}^2(D)$ satisfies Problem (\ref{u_pert}) with $f=0$. Then $u$ is a trapped mode, in the sense that $u \in \mH^2(D)$.
Indeed, if $m=0$, setting $v=u$ in (\ref{weak_pert}), we obtain
\[{\rm Im}\, t(u,u)=0.\]
Using the decomposition $(u,\partial_{x} u)=\sum_{p=1}^{+\infty} (g^{\pm}_p,h^{\pm}_p)\theta_p$ on $\Sigma_{\pm L}$, we find
\[{\rm Im}\, t(u,u)= \sum_{\mu=\pm}\sum_{p=1}^{n}\eta_p(\gamma_p^2 |g^\mu_p|^2+ |h^\mu_p|^2 + 2\gamma_p {\rm Re}(g^\mu_p\overline{h^\mu_p}))=\sum_{\mu=\pm}\sum_{p=1}^{n} \eta_p |\gamma_p g^\mu_p + h^\mu_p|^2.\]
We deduce that $\gamma_p g^{\pm}_p + h^{\pm}_p=0$ for $p=1,\dots,n$. Then working as in (\ref{a+}), we find that the coefficients $a^{\pm}_p$ in (\ref{outgoing}) satisfy $a^{\pm}_p=0$ for $p=1,\dots,n$ (the projection on the propagating modes is null). We infer that $u$ is exponentially decaying for $|x|>L$. As a consequence, $u$ belongs to $\mH^2(D)$.  
\end{remark}
\subsection{Scattering problem in the perturbed strip with radiation conditions}
Finally, we use the results of the previous paragraph to study the following scattering problem: find the total field $u$ such that
\begin{equation}\label{PbScattering}
\left\{
\begin{array}{rcll}
\Delta^2 u -k^4 u& = & 0  & \mbox{ \rm{in} }D \\
u =M u & = & 0 & \mbox{ \rm{on} }\partial \Omega \\
Mu=Nu& = & 0 & \mbox{ \rm{on} } \partial \mathscr{O} \\
\multicolumn{4}{c}{u-u_i \mbox{ \rm{satisfies} } \mrm{(RC)}}
\end{array}\right.
\end{equation}
where $u_i$ is an incident field which solves
\[
\left\{
\begin{array}{rcll}
\Delta^2 u_i -k^4 u_i& = & 0  & \mbox{ \rm{in} } \Omega\\
u_i =Cu_i& = & 0 & \mbox{ \rm{on} }\partial \Omega. 
\end{array}\right.
\]
In the following, we take $k\in (n\pi;(n+1)\pi)$ with $n \in \mathbb{N}^{\ast}$ and $u_i\in\{w_p^{\pm}\,|\,p=1,\dots,n\}$, where $w_p^{\pm}$ is the propagating mode such that
\begin{equation}\label{NormalisationModesSimplySupp}
w_p^{\pm}(x,y)=(2\eta_p)^{-1/2}e^{\pm i\eta_p x}\theta_p(y)=\eta_p^{-1/2}e^{\pm i\sqrt{k^2-\pi^2p^2} x}\sin(\pi p y).
\end{equation}
The normalization in (\ref{NormalisationModesSimplySupp}) is chosen so that the scattering matrix below is unitary. 
\begin{theorem}\label{ThmScaSimply1}
Assume that $k\in(n\pi;(n+1)\pi)$ with $n \in \mathbb{N}^{\ast}$. Then for $u_i=w_p^{\pm}$, $p=1,\dots,n$, Problem (\ref{PbScattering}) admits a solution $u_p^{\pm}$. This solution is uniquely defined if and only if trapped modes are absent at the wavenumber $k$. 
\end{theorem}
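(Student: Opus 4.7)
The plan is to reduce the scattering problem (\ref{PbScattering}) to a source term problem of the form (\ref{u_pert}) and then invoke Theorem \ref{thmIsomObstacleSimply}. Fix a smooth cutoff function $\psi = \psi(x)$ with $\psi \equiv 0$ in a neighbourhood of $\overline{\mathscr{O}}$ and $\psi \equiv 1$ for $|x|$ large, and set $v := u - \psi u_i$. Since $u_i$ satisfies $\Delta^2 u_i - k^4 u_i = 0$ in $\Omega$, a direct computation yields $(\Delta^2 - k^4) v = -(\Delta^2 - k^4)(\psi u_i) =: f$ in $D$, with $f$ supported where $\psi'$ does not vanish, so that $f \in \mL^2(D)$ is compactly supported. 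The boundary conditions on $\partial\Omega$ for $v$ follow from those of $u$ and $u_i$ together with the fact that $\psi$ depends only on $x$ while the straight parts of $\partial\Omega$ are horizontal, so $M(\psi u_i) = \psi Mu_i = 0$ on $\partial\Omega$; on $\partial\mathscr{O}$ the function $\psi u_i$ vanishes identically, hence $Mv = Nv = 0$ there; and for $|x|$ large $v = u - u_i$ satisfies the radiation condition $\mrm{(RC)}$. Therefore $v$ solves (\ref{u_pert}).

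By Theorem \ref{thmIsomObstacleSimply}, the associated operator $B^{\mrm{out}}$ is Fredholm of index zero. The equivalence stated in the last sentence of the theorem is then essentially immediate: by Remark \ref{RmqTrappedModes}, $\ker B^{\mrm{out}}$ coincides with the space of trapped modes, so if a non-zero trapped mode $z$ exists then $u_p^{\pm} + z$ is another solution and uniqueness fails, while if trapped modes are absent then $B^{\mrm{out}}$ is an isomorphism and the solution is unique. The non-trivial point, independent of uniqueness, is to establish existence in every case, which by Theorem \ref{thmIsomObstacleSimply}$(b)$ reduces to verifying the compatibility condition
\[
\int_{D_L} f\,\overline{z}\,dx\,dy = 0 \qquad \mbox{for every } z \in \ker B^{\mrm{out}}.
\]

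To prove this identity I would apply Green's formula (Lemma \ref{IPP}) to the pair $(\psi u_i, \overline{z})$ on $D_L$. Using that $\Delta^2 \overline{z} - k^4 \overline{z} = 0$ in $D$, the bulk contributions collapse to $-\int_{D_L} f\,\overline{z}\,dx\,dy$. The boundary of $D_L$ decomposes as $\Gamma_L \cup \Sigma_{\pm L} \cup \partial \mathscr{O}$: the contributions along $\Gamma_L$ vanish because $\psi u_i = M(\psi u_i) = 0$ and $\overline{z} = M\overline{z} = 0$ there, and those along $\partial\mathscr{O}$ vanish because $\psi u_i \equiv 0$ in a neighbourhood of $\partial\mathscr{O}$ while $M\overline{z} = N\overline{z} = 0$. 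What remains is a ``symplectic'' surface integral of the form $q(L) - q(-L)$ on the transverse sections $\Sigma_{\pm L}$, built from the traces of $u_i$ and $\overline{z}$ together with their normal derivatives paired through $M$ and $N$.

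The main difficulty, and the heart of the argument, is to show that $q(\pm L) = 0$. For this I would combine two observations. First, applying the same Green's identity on a slab $(L_1; L_2) \times I$ entirely contained in the reference strip (so that $\psi u_i = u_i$ and both $u_i$, $\overline{z}$ solve the homogeneous biharmonic equation with the simply supported conditions on $y=0,1$) yields $q(L_1) = q(L_2)$, so that $L \mapsto q(L)$ is constant for $|L|$ large enough (conservation of the biharmonic current). Second, since $z \in \mH^2(D)$ is a trapped mode, applying the decomposition (\ref{general_form}) to $z$ for $|x|$ large forces all propagating coefficients to vanish; hence $z$ and its derivatives decay exponentially as $|x| \to \infty$, whereas $u_i = w_p^{\pm}$ is only bounded. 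Therefore $q(L) \to 0$ as $L \to +\infty$, and by constancy $q(L) = 0$ for all $L$ sufficiently large. The same argument applied at $-\infty$ gives $q(-L) = 0$, which completes the verification of the compatibility condition and yields the existence of $u_p^{\pm}$.
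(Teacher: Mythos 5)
Your proof is correct and follows the same overall strategy as the paper's: lift the incident field by a cutoff $\zeta$ (your $\psi$), reduce (\ref{PbScattering}) to the source-term problem (\ref{u_pert}) with right-hand side $f=-(\Delta^2-k^4)(\zeta u_i)$, invoke Theorem \ref{thmIsomObstacleSimply}, and verify the compatibility conditions $\int_{D_L}f\,\overline{z_j}\,dxdy=0$ by integrating by parts twice with Lemma \ref{IPP}. The only place where your argument differs in substance is in the final step, showing that the boundary contributions over $\Sigma_{\pm L}$ vanish. The paper expands $u_i=w_p^{\pm}$ and $z_j$ on the orthonormal basis $(\theta_p)$, uses that trapped modes carry no propagating components (Remark \ref{RmqTrappedModes}), and then checks via (\ref{MN}) that the resulting one-dimensional Wronskian is identically zero. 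You instead observe that the symplectic quantity $L\mapsto q(L)$ is constant in the unperturbed outer region and tends to zero because $z_j$ decays exponentially while $u_i$ stays bounded, so $q(\pm L)=0$. Both arguments exploit the same decay property and yield the same conclusion; your conservation-plus-decay phrasing sidesteps the explicit modal pairing and is marginally more robust, but it is not a structurally different route.
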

\begin{proof}
Let $\zeta$ be a smooth cut-off function which depends only on $x$, which vanishes in a neighborhood of the hole $\mathscr{O}$, and which is equal to one for $|x|\ge L-\eps$ for some small given $\eps>0$. Theorem \ref{thmIsomObstacleSimply} guarantees that there is a function $v\in\mH_{\rm loc}^2(D)$ which solves the problem
\begin{equation}\label{eq:scat}
\left\{
\begin{array}{rcll}
\Delta^2 v -k^4 v& = & f  & \mbox{ \rm{in} }D \\
v =Mv & = & 0 & \mbox{ \rm{on} }\partial \Omega \\
Mv=Nv & = & 0 & \mbox{ \rm{on} } \partial \mathscr{O} \\
\multicolumn{4}{c}{v \mbox{ \rm{satisfies} } \mrm{(RC)}.} 
\end{array}\right.
\end{equation}
with $f:=-(\Delta^2(\zeta u_i) -k^4 (\zeta u_i))$. Indeed, first we observe that $f$ belongs to $\mL^2(D)$ and is compactly supported. Now, if trapped modes are absent at the given wavenumber $k$, $B^{\mrm{out}}$ is an isomorphism and the existence of $v$ is clear. If $\ker\,B^{\mrm{out}}=\mrm{span}(z_1,\dots,z_d)$ for some $d\ge1$, one observes that for $j=1,\dots,d$, we have
\[
\begin{array}{ll}
\dsp\int_{D_L}f\,\overline{z_j}\,dxdy&\dsp =-\int_{D_L}(\Delta^2(\zeta u_i) -k^4 (\zeta u_i))\,\overline{z_j}\,dxdy\\[7pt]
&\dsp=-\int_{\Sigma_{\pm L}} \left((Nu_i)\overline{z_j}+(Mu_i) \deri{\overline{z_j}}{n}\right)\,ds+\int_{\Sigma_{\pm L}} \left(u_iN\overline{z_j}+ \deri{u_i}{n}M\overline{z_j}\right)\,ds\\[7pt]
&\dsp\qquad-\int_{D_L}\zeta u_i(\Delta^2 \overline{z_j} -k^4 \overline{z_j})\,dxdy=0.
\end{array}
\]
To obtain the second identity, we used twice the integration by parts formula of Lemma \ref{IPP} (observe that $\zeta$ vanishes in a neighbourhood of $\mathscr{O}$, $\zeta$ depends only on $x$ so that the integrals on $\Gamma_L$ vanish and finally $\zeta$ is equal to $1$ in the neighborhood of $\Sigma_{\pm L}$). To obtain the third equality, we used the formulas \eqref{MN}, the orthonormality of the family $(\theta_p)$ in $\mL^2(I)$ and the fact that the trapped modes $z_j$ satisfy \eqref{eq:scat} with $f=0$ and do not decompose on the propagating modes (see Remark \ref{RmqTrappedModes}). Once we have the guarantee that $v$ is well-defined, we can set $u:=v+\zeta u_i$. One can verify that $u$ is a solution to problem (\ref{PbScattering}).
\end{proof}
\noindent For $p=1,\dots,n$, denote $\Psi_p$ the solution of \eqref{PbScattering} for $u_i=w_p^-$ and $\Psi_{n+p}$ the solution for $u_i=w_p^+$. Introduce  $\chi^{\pm}\in\mathscr{C}^{\infty}(\R^2)$ a cut-off function equal to one for $\pm x\ge 2L$ and to zero for $\pm x\le L$, for a given $L>0$. Decompose the $\Psi_p$ as
\[
\begin{array}{lcl}
\Psi_p&\hspace{-0.2cm}=&\hspace{-0.2cm}\chi^+\,w^-_p+\chi^+\dsp\sum_{m=1}^{n} s_{p\,m}\,w^{+}_{m} + \chi^-\sum_{m=1}^{n} s_{p\,n+m}\,w^{-}_{m} + \tilde{\Psi}_p,\\[15pt] 
\Psi_{n+p}&\hspace{-0.2cm}=&\hspace{-0.2cm}\chi^-\,w^+_p+\chi^+\dsp\sum_{m=1}^{n} s_{n+p\,m}\,w^{+}_{m} + \chi^-\sum_{m=1}^{n} s_{n+p\,n+m}\,w^{-}_{m} + \tilde{\Psi}_{n+p},
\end{array}
\]
where the $\tilde{\Psi}_p$, $p=1,\dots,2n$, are functions which are exponentially decaying at infinity and where the $s_{p\,m}$, $1\le p,m\le 2n$, are complex numbers. Define the scattering matrix 
\begin{equation}\label{DefScatteringMatrixSimply}
\mathbb{S}:=(s_{p\,m})_{1\le p,m\le 2n}\in\Cplx^{2n\times2n}.
\end{equation}
\begin{theorem}\label{ThmScaSimply2}
For all $k\in(n\pi;(n+1)\pi)$, $n \in \mathbb{N}^{\ast}$, the scattering matrix (\ref{DefScatteringMatrixSimply}) is uniquely defined (even in presence of trapped modes), unitary ($\mathbb{S}\,\overline{\mathbb{S}}^{\top}=\mrm{Id}^{2n\times2n}$) and symmetric ($\mathbb{S}^{\top}=\mathbb{S}$). 
\end{theorem}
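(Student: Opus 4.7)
The plan is to establish uniqueness of $\mathbb{S}$ separately and then derive both unitarity and symmetry from a single bilinear Green-type identity on $\partial D_L$, combined with the asymptotic expansions of the $\Psi_p$. For uniqueness, if $\Psi_p$ and $\Psi_p'$ are two solutions of \eqref{PbScattering} with the same incident field, their difference solves the homogeneous problem and satisfies (RC), hence is a trapped mode. By Remark \ref{RmqTrappedModes} this difference lies in $\mH^2(D)$ and has vanishing projection on every propagating mode $w_m^{\pm}$, $m=1,\dots,n$. Since the coefficients $s_{p\,m}$ are read off precisely these projections in the asymptotic expansion, they are independent of the choice of scattering solution, so $\mathbb{S}$ is well defined even when $\ker\,B^{\mrm{out}}\neq\{0\}$.

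For unitarity and symmetry, the key tool is the bilinear identity obtained by applying Lemma \ref{IPP} twice (without conjugation) and subtracting: for any $u,v\in\mH^2_{\loc}(D)$ satisfying $\Delta^2u-k^4u=\Delta^2v-k^4v=0$ in $D$ together with the boundary conditions $u=Mu=v=Mv=0$ on $\partial\Omega$ and $Mu=Nu=Mv=Nv=0$ on $\partial\mathscr{O}$, one obtains
\[
\int_{\Sigma_{L}\cup\Sigma_{-L}}\!\bigl[(Mv)\partial_n u+(Nv)u-(Mu)\partial_n v-(Nu)v\bigr]\,ds=0,
\]
the contributions from $\Gamma_L$ and $\partial\mathscr{O}$ dropping out thanks to the boundary conditions. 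Since the coefficients of the operator are real, $\overline{\Psi_q}$ also satisfies the same boundary value problem, so the identity remains valid with $v$ replaced by $\overline{\Psi_q}$, yielding a sesquilinear companion.

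The final step is to evaluate this boundary form on pairs of modes and then on the full asymptotic expansions. Using the relations $Mw_p^{\sigma}=-(\eta_p^2+\nu\mu_p)w_p^{\sigma}$ and $Nw_p^{\sigma}=i\sigma\eta_p(\eta_p^2+(2-\nu)\mu_p)w_p^{\sigma}$ on $\Sigma_L$ (with the corresponding signs on $\Sigma_{-L}$), together with $\int_0^1\sin(\pi py)\sin(\pi qy)\,dy=\delta_{pq}/2$ and the identity $\eta_p^2+\mu_p=k^2$, one checks that on a fixed transverse section the boundary form of two propagating modes with distinct transverse indices vanishes; for $p=q$, the normalisation $(2\eta_p)^{-1/2}$ chosen in \eqref{NormalisationModesSimplySupp} is precisely the one that produces the canonical pairing proportional to $\delta_{\sigma\tau}$ in the sesquilinear case and to $\delta_{\sigma,-\tau}$ in the bilinear case (with a common purely imaginary constant that factors out of the final relations). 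All contributions that involve at least one evanescent mode decay as $O(e^{-\eps L})$ for some $\eps>0$ as $L\to+\infty$. Substituting the expansions of $\Psi_p$ and $\overline{\Psi_q}$ into the sesquilinear identity and letting $L\to+\infty$ then yields, for every $p,q\in\{1,\dots,2n\}$, the relation $\delta_{p\,q}=\sum_{r=1}^{2n}s_{p\,r}\overline{s_{q\,r}}$, i.e.\ $\mathbb{S}\overline{\mathbb{S}}^{\top}=\mrm{Id}^{2n\times2n}$; the same manipulation applied to $\Psi_p$ and $\Psi_q$ in the bilinear identity gives $\mathbb{S}^{\top}=\mathbb{S}$. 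The main obstacle is the algebraic bookkeeping needed to verify that the combination of $M$ and $N$ on the modes, together with the chosen normalisation, produces the canonical pairing of propagating modes; once this is in hand, unitarity and symmetry follow at once from the conservation identity and from the exponential decay of the evanescent part $\tilde{\Psi}_p$.
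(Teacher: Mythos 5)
Your proposal is correct and follows essentially the same route as the paper: uniqueness is argued identically (the difference of two scattering solutions is a trapped mode which, by Remark \ref{RmqTrappedModes}, has no component on the propagating modes), and your Green-identity/flux computation on $\Sigma_{\pm L}$ with the mode pairings $\delta_{\sigma\tau}$ (sesquilinear) and $\delta_{\sigma,-\tau}$ (bilinear) is exactly the symplectic-form biorthogonality argument the paper invokes by deferring to the clamped case (Theorem \ref{TheoremDecompoKernel} via Proposition \ref{PropositionFluxNRJ}), merely written out explicitly with $M$, $N$ and the sine basis instead of the abstract form $q_{D}(\cdot,\cdot)$.
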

\begin{proof}
If trapped modes are absent at the wavenumber $k$, the $\Psi_p$'s are uniquely defined and the scattering matrix as well. In the presence of trapped modes, assume that Problem (\ref{PbScattering}) admits two solutions $u^1$ and $u^2$ for a given $u_i\in\{w_p^{\pm}\,|\,p=1,\dots,n\}$. Then $u^1-u^2$ is a trapped mode which, according to Remark \ref{RmqTrappedModes}, do not decompose on the propagating modes. This is enough to show that $\mathbb{S}$ is uniquely defined. The unitarity and the symmetry of $\mathbb{S}$ will be established in the clamped case (see Theorem \ref{TheoremDecompoKernel}). The proof is exactly the same here.
\end{proof}

\section{Well-posedness in the clamped case}\label{SectionClamped}

In this section, we suppose that $k$ is not a threshold wavenumber, i.e. $k\neq k_n$ for $n\in\N$, where $k_n$ is defined in \eqref{defThreshold}.
\subsection{Preliminaries}\label{sub:Preliminaries_clamped}
In this section, our goal is to study Problem (\ref{u_model}) with clamped boundary conditions. To proceed, first we shall work on the problem set in the reference strip, without hole:
\begin{equation}\label{PbInitial}
\left\{
\begin{array}{rcll}
\Delta^2 u-k^4u & = & f & \mbox{ in }\Om\\[3pt]
u= \partial_nu  & = & 0  & \mbox{ on }\partial\Om,
\end{array}
\right.
\end{equation}
where $f$ is a given source term in a space to determine. In order to define radiation conditions at infinity, we will have to establish a modal decomposition for the solutions of (\ref{PbInitial}) with $f=0$ similar to (\ref{general_form}) for the simply supported case. Expansion (\ref{general_form}) was derived thanks to a result of Hilbert basis (see Remark \ref{RmkBasis}). For the clamped problem, we do not know if the family of eigenfunctions of the symbol $\mathscr{L}$ defined in (\ref{defOperator}) forms a Hilbert basis of $\mL^2(I)$. As a consequence, as mentioned in the introduction, we will establish the modal decomposition with a different strategy based on the joint use of the Fourier-Laplace transform  in the unbounded direction, of weighted Sobolev spaces defined in Section \ref{sub:WeightedSobolev} and of the residue theorem. 
The foundation of the theory is due to Kondratiev \cite{Kond67}. For a modern presentation of the technique, one may consult the monographs \cite{NaPl94,KoMR97,KoMR01}. To help the reader, below we try to give enough details to get a self-consistent presentation of the approach. Again, we emphasize that the method we develop in this section to study the clamped problem can also be used to consider the simply supported problem. 
\subsubsection{The weighted Sobolev spaces}\label{sub:WeightedSobolev}

For $\beta\in\R$, define the space $\mathring{\mW}^2_{\beta}(\Om)$ as the completion of $\mathscr{C}^{\infty}_0(\Om)$ for the norm
\begin{equation}\label{FirstWeightedSpaces}
\|v\|_{\mW^2_{\beta}(\Om)}=\Big
(\sum_{\alpha,\,\gamma\in\N,\ \alpha+\gamma\le 2}\|e^{\beta x}\partial^{\alpha}_x\partial^{\gamma}_y v\|^2_{\mrm{L}^2(\Om)}\Big)^{1/2}.
\end{equation}
Observe that for $\beta=0$, we have $\mW^2_{0}(\Om)=\mH^2_0(\Om)$ where $\mH^2_0(\Om)$ stands for the usual Sobolev space. We denote $\mathring{\mW}^2_{\beta}(\Om)^{\ast}$ the topological dual space of $\mathring{\mW}^2_{\beta}(\Om)$ endowed with the norm
\[
\|f\|_{\mathring{\mW}^2_{\beta}(\Om)^{\ast}}=\sup_{v\in\mathring{\mW}^2_{\beta}(\Om)\setminus\{0\}}\cfrac{|\langle f,\overline{v}\rangle_{\Om}|}{\|v\|_{\mW^2_{\beta}(\Om)}}.
\]
Here $\langle \cdot,\cdot\rangle_{\Om}$ refers to the bilinear duality pairing between $\mathring{\mW}^2_{\beta}(\Om)^{\ast}$ and $\mathring{\mW}^2_{\beta}(\Om)$. For $\beta\in\R$, define the linear and bounded operator $A_{\beta}:\mathring{\mW}^2_{\beta}(\Om)\to \mathring{\mW}^{2}_{-\beta}(\Om)^{\ast}$ such that 
\begin{equation}\label{DefOperateurs}
\langle A_{\beta} u,\overline{v}\rangle_{\Om}=\int_{\Om} \Delta u\,\overline{\Delta v}-k^4 u\,\overline{v}\,dxdy,\qquad \forall (u,v)\in\mathring{\mW}^2_{\beta}(\Om)\times \mathring{\mW}^2_{-\beta}(\Om).
\end{equation}
Define the partial Fourier-Laplace transform $\mathcal{L}_{x\to\lambda}$ with respect to the variable $x$ such that, for $\lambda\in\Cplx$, 
\[
\hat{v}(\lambda,\cdot)=(\mathcal{L}_{x\to\lambda}v)(\lambda,\cdot):=\int_{-\infty}^{+\infty} e^{-\lambda x}v(x,\cdot)\,dx.
\]
 It is an isomorphism between 
 \[
 \mW^2_{\beta}(\Om)\quad\text{and}\quad\widehat{\mW}^2_{\beta}:=\Big\{\hat{v}\in \mL^2(\ell_{-\beta},\mH^2_0(I)),\;\int_{\ell_{-\beta}}\|\hat{v}(\lambda,\cdot)\|_{\mH^2(I,|\lambda|)}^2\,d\lambda\;<+\infty\Big\}
 \]
  where $\ell_{-\beta} = \{\lambda=-\beta+i s,\;s\in\R\}$, for all $\beta\in\R$ and where
\begin{equation}\label{normParam1}
	\|\varphi\|_{\mH^2(I,\,|\lambda|)}:=\Big
(\sum_{\alpha,\,\gamma\in\N,\ \alpha+\gamma\le 2}\||\lambda|^{\alpha}\partial^{\gamma}_y \varphi\|^2_{\mrm{L}^2(I)}\Big)^{1/2},\qquad\forall \varphi\in\mH_0^2(I).
\end{equation}
Note that for a fixed $\lambda$, the norms $\|\cdot\|_{\mH^2(I,\,|\lambda|)}$ and $\|\cdot\|_{\mH^2(I)}$ are equivalent on $\mH^2_0(I)$. However the constants of equivalence depend on $|\lambda|$.
\\\\
We have also the Plancherel formula
 \begin{equation}\label{eq:Plancherel_1/2}
 \|v\|^2_{\mW^2_{\beta}(\Om)}	 = \frac{1}{2\pi i}\,\int_{\ell_{-\beta}} \|\hat{v}(\lambda,\cdot)\|_{\mH^2(I,|\lambda|)}^2\,d\lambda:=\|\hat{v}\|_{\widehat{\mW}^2_{\beta}}^2,
 \end{equation}
 and the inverse $\mathcal{L}_{x\to\lambda}^{-1}$ is given by
 \[
 	\forall \hat{v}\in \widehat{\mW}^2_{\beta},\quad \big(\mathcal{L}_{x\to\lambda}^{-1}\hat{v}\big)(x,\cdot)=\frac{1}{2\pi i}\int_{\ell_{-\beta}}e^{\lambda x}\,\hat{v}(\lambda,\cdot)\,d\lambda.
 \]
Let us denote $\widehat{\mW}^{2\ast}_{\beta}$ the topological dual space of $\widehat{\mW}^2_{\beta}$ which can be characterized as
\[
	\widehat{\mW}^{2\ast}_{\beta}=\Big\{\hat{g}\in \mL^2(\ell_{\beta},\mH^{-2}(I)),\;\int_{\ell_{\beta}}\|\hat{g}(\lambda,\cdot)\|_{\mH^{-2}(I,|\lambda|)}^2\,d\lambda\;<+\infty\Big\}
\]
where 
\begin{equation}\label{normParam2}
\|g\|_{\mH^{-2}(I,\,|\lambda|)}:=\sup_{\varphi\in\mH^2_0(\Om)\setminus\{0\}}\cfrac{|\langle g,\overline{\varphi}\rangle_{I}|}{\|\varphi\|_{\mH^{2}(I,\,|\lambda|)}},\qquad\forall g\in\mH^{-2}(I),
\end{equation}
$\langle \cdot,\cdot\rangle_{I}$ being the duality product between $\mH^{-2}(I)$ and $\mH^2_0(I)$.
The partial Laplace Fourier Transform $\mathcal{L}_{x\to\lambda}$ can be defined by duality for functions in ${\mathring{\mW}^2_{\beta}(\Om)^{\ast}}$ as
 \[
 	\forall f\in\mathring{\mW}^{2}_{\beta}(\Om)^{\ast},\;\hat{v}\in \widehat{\mW}^2_{\beta},\quad \langle \mathcal{L}_{x\to\lambda} f, \hat{v}\rangle_{\hat\Om}:= \langle  f,\mathcal{L}_{x\to\lambda}^{-1}\hat{v}\rangle_{\Om}
 \]
where $\langle \cdot,\cdot\rangle_{\Om}$ refers to the bilinear duality pairing between $\mathring{\mW}^2_{\beta}(\Om)^{\ast}$ and $\mathring{\mW}^2_{\beta}(\Om)$ and $\langle \cdot,\cdot\rangle_{\hat\Om}$ refers to the one between $\widehat{\mW}^{2\ast}_{\beta}$ and $\widehat{\mW}^2_{\beta}$. Finally, we have also a Plancherel formula
 \begin{equation}\label{eq:Plancherel_1/2ast}
 \|f\|^2_{\mathring{\mW}^{2}_{\beta}(\Om)^{\ast}}	 = \frac{1}{2\pi i}\,\int_{\ell_{\beta}} \|\hat{f}(\lambda,\cdot)\|_{\mH^{-2}(I,|\lambda|)}^2\,d\lambda.
 \end{equation}
We can now apply $\mathcal{L}_{x\to\lambda}$ to the equation $A_{\beta}u=f$, one is led to study the symbol $\mathscr{L}(\lambda):\mH^2(I,|\lambda|)\to\mH^{-2}(I,|\lambda|)$ for $\lambda\in-\beta+i\mathbb{R}$, defined in (\ref{defOperator}) and such that 
\begin{equation}\label{defSymbol}
\langle \mathscr{L}(\lambda) \varphi,\overline{\psi}\rangle_{I} =\int_{I}(\lambda^2\varphi+d_{yy} \varphi)(\lambda^2\overline{\psi}+d_{yy} \overline{\psi})-k^4\varphi\overline{\psi}\,dy,\qquad\forall \varphi,\,\psi\in\mH^2_0(I).
\end{equation}
In the following, we shall denote $(\cdot,\cdot)_I$ the usual inner product of $\mrm{L}^2(I)$. Studying the properties of the symbol $\mathscr{L}(\cdot)$ defined in (\ref{defSymbol}) leads to consider $1\mrm{D}$ problems set on $I$ depending on a complex parameter $\lambda$. 

\subsubsection{Properties of the symbol}\label{paragraphSymbol}

In this paragraph, we study the properties of the symbol $\mathscr{L}(\cdot)$ defined in (\ref{defSymbol}).
For a fixed $\lambda$, as the norms $\|\cdot\|_{\mH^2(I,\,|\lambda|)}$ and $\|\cdot\|_{\mH^2(I)}$ are equivalent on $\mH^2_0(I)$, it suffices to study $\mathscr{L}(\cdot)$ as an operator from $\mH^2_0(I)$ in $\mH^{-2}(I)$ and establish estimates in the $\lambda$ dependent norms of $\mH^2(I,|\lambda|)$ and $\mH^{-2}(I,|\lambda|)$.
\begin{lemma}\label{resultPrelim}
There is $\tau_0>0$ such that for $\lambda=i\tau$ with $|\tau|\ge\tau_0$, $\mathscr{L}(\lambda):\mH^2_0(I)\to\mH^{-2}(I)$ is an isomorphism.  
\end{lemma}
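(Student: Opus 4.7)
The plan is to prove isomorphism by establishing coercivity of the sesquilinear form associated to $\mathscr{L}(i\tau)$ in the parameter-dependent norm $\|\cdot\|_{\mH^2(I,|\tau|)}$ for $|\tau|$ large, and then invoking the Lax--Milgram theorem.

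First, I would test the variational definition (\ref{defSymbol}) against $\psi=\varphi$ with $\lambda=i\tau$, noting that $\lambda^2=-\tau^2\in\R$. This yields
\[
\langle \mathscr{L}(i\tau)\varphi,\overline{\varphi}\rangle_I=\|d_{yy}\varphi-\tau^2\varphi\|_{\mrm{L}^2(I)}^2-k^4\|\varphi\|_{\mrm{L}^2(I)}^2.
\]
Next I would expand the square and use the boundary conditions $\varphi=d_y\varphi=0$ at the endpoints of $I$ (valid since $\varphi\in\mH^2_0(I)$) to integrate by parts the cross term $-2\tau^2(d_{yy}\varphi,\varphi)_I=2\tau^2\|d_y\varphi\|_{\mrm{L}^2(I)}^2$. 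This gives the clean identity
\[
\langle \mathscr{L}(i\tau)\varphi,\overline{\varphi}\rangle_I=\|d_{yy}\varphi\|_{\mrm{L}^2(I)}^2+2\tau^2\|d_y\varphi\|_{\mrm{L}^2(I)}^2+(\tau^4-k^4)\|\varphi\|_{\mrm{L}^2(I)}^2.
\]
Since all three coefficients are nonnegative as soon as $\tau^2\ge k^2$, the form is Hermitian positive for such $\tau$.

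I then would compare this to the squared parameter-dependent norm, which from (\ref{normParam1}) reads
\[
\|\varphi\|_{\mH^2(I,|\tau|)}^2=(1+\tau^2+\tau^4)\|\varphi\|_{\mrm{L}^2(I)}^2+(1+\tau^2)\|d_y\varphi\|_{\mrm{L}^2(I)}^2+\|d_{yy}\varphi\|_{\mrm{L}^2(I)}^2.
\]
Matching term by term, coercivity with a constant $c>0$ reduces to checking $1\ge c$, $2\tau^2\ge c(1+\tau^2)$ and $\tau^4-k^4\ge c(1+\tau^2+\tau^4)$; since each ratio tends respectively to $1$, $2$ and $1$ as $|\tau|\to\infty$, there exists $\tau_0>0$ (depending on $k$) such that all three hold with, say, $c=1/2$ for every $|\tau|\ge\tau_0$. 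Hence $\mrm{Re}\,\langle \mathscr{L}(i\tau)\varphi,\overline{\varphi}\rangle_I\ge c\,\|\varphi\|_{\mH^2(I,|\tau|)}^2$ uniformly in $|\tau|\ge\tau_0$.

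Finally, I would apply the Lax--Milgram theorem in $\mH^2_0(I)$ equipped with the parameter-dependent norm $\|\cdot\|_{\mH^2(I,|\tau|)}$, whose dual is $\mH^{-2}(I)$ endowed with (\ref{normParam2}); continuity of the form is immediate from Cauchy--Schwarz applied to (\ref{defSymbol}), and coercivity was just established. This yields that $\mathscr{L}(i\tau):\mH^2_0(I)\to\mH^{-2}(I)$ is an isomorphism for $|\tau|\ge\tau_0$. I do not foresee any serious obstacle: the only slightly delicate point is writing the coercivity in the parameter-dependent norm rather than the ordinary $\mH^2$ norm, which is needed later for the Fourier--Plancherel argument via (\ref{eq:Plancherel_1/2})--(\ref{eq:Plancherel_1/2ast}), but the inequality $\tau^4-k^4\ge c\,\tau^4$ for large $|\tau|$ makes this bookkeeping straightforward.
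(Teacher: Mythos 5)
Your proof is correct and follows essentially the same route as the paper's: expand $\langle \mathscr{L}(i\tau)\varphi,\overline{\varphi}\rangle_I$, integrate the cross term by parts using the clamped boundary conditions, observe that the resulting form $\|d_{yy}\varphi\|^2 + 2\tau^2\|d_y\varphi\|^2 + (\tau^4-k^4)\|\varphi\|^2$ is nonnegative once $\tau^2\ge k^2$, and invoke Lax--Milgram. The paper's proof simply displays this identity (their Eq.\ (\ref{SymbolComplexe})) and concludes with ``take $\tau_0 = k$.'' You go slightly further by verifying coercivity in the parameter-dependent norm $\|\cdot\|_{\mH^2(I,|\tau|)}$ with a uniform constant; note this requires $\tau_0 > k$ strictly (at $\tau^2=k^2$ the coefficient of $\|\varphi\|^2$ vanishes while the norm carries the factor $1+\tau^2+\tau^4$), whereas coercivity in the ordinary $\mH^2$ norm — which is all the lemma asserts, since for fixed $\tau$ the two norms are equivalent — does hold at $\tau_0 = k$ via a Poincar\'e inequality. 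Your extra care about uniformity is exactly what the paper later needs in Lemma~\ref{lemmaPapillon}, so making it explicit here is harmless and arguably clearer.
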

\begin{proof}
For $\lambda=i\tau$, we have 
\begin{equation}\label{SymbolComplexe}
\langle \mathscr{L}(i\tau) \varphi,\overline{\psi}\rangle_{I} =\int_{I}d_{yy} \varphi\, d_{yy}\overline{\psi}+2\tau^2d_{y}\varphi\,d_{y}\overline{\psi}+\tau^4\varphi\overline{\psi}-k^4\varphi\overline{\psi}\,dy.
\end{equation}
Therefore, the result is a consequence of the Lax-Milgram theorem (take $\tau_0=k$).
\end{proof}
\noindent We remind the reader that we say that $\lambda\in\Cplx$ is an eigenvalue of $\mathscr{L}$ if there is a non-zero $\varphi\in\mH^2_0(I)$ such that $\mathscr{L}(\lambda)\varphi=0$. We denote $\Lambda$ the set of eigenvalues of $\mathscr{L}$. From Lemma \ref{resultPrelim}, according to the analytic Fredholm theorem, we deduce the following result. 
\begin{corollary}\label{coroInvertibility}
For all $\lambda\in\Cplx$, $\mathscr{L}(\lambda):\mH^2_0(I)\to\mH^{-2}(I)$ is an isomorphism if and only if $\lambda$ is not an eigenvalue of $\mathscr{L}$. The set of eigenvalues of $\mathscr{L}$ is discrete  and does not have any accumulation point in $\Cplx$. 
\end{corollary}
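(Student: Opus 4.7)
The plan is to apply the analytic Fredholm theorem to the holomorphic operator-valued function $\lambda\mapsto\mathscr{L}(\lambda)$.

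First I would show that for every $\lambda\in\Cplx$, the operator $\mathscr{L}(\lambda):\mH^2_0(I)\to\mH^{-2}(I)$ is Fredholm of index zero. To this end, split
\[
\mathscr{L}(\lambda)=\mathscr{L}_0+\mathscr{K}(\lambda),
\]
where $\mathscr{L}_0\varphi:=d_y^4\varphi+\varphi$ is the principal part (plus a harmless zero-order term) and $\mathscr{K}(\lambda)\varphi:=2\lambda^2 d_y^2\varphi+(\lambda^4-k^4-1)\varphi$ gathers the lower-order terms. Integration by parts shows that
\[
\langle \mathscr{L}_0\varphi,\overline{\varphi}\rangle_I=\|d_y^2\varphi\|_{\mrm{L}^2(I)}^2+\|\varphi\|_{\mrm{L}^2(I)}^2,
\]
and by Poincar\'e's inequality this is equivalent to $\|\varphi\|_{\mH^2(I)}^2$ on $\mH^2_0(I)$; hence $\mathscr{L}_0$ is an isomorphism by Lax-Milgram. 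On the other hand, the embeddings $\mH^2_0(I)\hookrightarrow\mH^1_0(I)\hookrightarrow\mrm{L}^2(I)\hookrightarrow\mH^{-2}(I)$ are compact (Rellich), so $\mathscr{K}(\lambda):\mH^2_0(I)\to\mH^{-2}(I)$ is compact for every $\lambda$. Therefore $\mathscr{L}(\lambda)$ is Fredholm of index zero, and by the explicit expression of $\mathscr{K}(\lambda)$ the map $\lambda\mapsto\mathscr{L}(\lambda)$ is a polynomial (hence holomorphic) family of such Fredholm operators.

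Next I would use Lemma \ref{resultPrelim}: it provides $\lambda_0=i\tau_0$ at which $\mathscr{L}(\lambda_0)$ is an isomorphism, so the holomorphic Fredholm family is not everywhere singular. By the analytic Fredholm theorem (see e.g. \cite{KoMR97}), the set of $\lambda\in\Cplx$ for which $\mathscr{L}(\lambda)$ fails to be invertible is discrete in $\Cplx$ with no accumulation point in $\Cplx$, and at every such $\lambda$ the failure of invertibility coincides with the non-triviality of $\ker\mathscr{L}(\lambda)$ (because the index is zero). This is exactly the statement of the corollary: $\mathscr{L}(\lambda)$ is an isomorphism if and only if $\lambda\notin\Lambda$, and $\Lambda$ is discrete without accumulation in $\Cplx$.

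No genuine obstacle is expected here; the only mild point to handle carefully is the use of the $\lambda$-dependent norms $\|\cdot\|_{\mH^2(I,|\lambda|)}$ and $\|\cdot\|_{\mH^{-2}(I,|\lambda|)}$, but for fixed $\lambda$ these are equivalent to the standard $\mH^2_0(I)$ and $\mH^{-2}(I)$ norms (as already noted after \eqref{normParam1}), so Fredholmness and the invertibility question are insensitive to this choice. The $|\lambda|$-uniform behaviour of the constants will be relevant later for the Fourier-Laplace arguments but plays no role in the present corollary.
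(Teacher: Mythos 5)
Your proposal is correct and follows essentially the same route as the paper: the paper's proof simply invokes Lemma \ref{resultPrelim} (invertibility at some $\lambda=i\tau$) together with the analytic Fredholm theorem, which is exactly your argument, with the additional (sound) verification that $\mathscr{L}(\lambda)$ is a holomorphic family of index-zero Fredholm operators via a coercive-plus-compact splitting.
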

\noindent In order to apply the inverse Fourier-Laplace transform, we need estimates for $\mathscr{L}(\lambda)^{-1}$ on lines $\{\lambda\in\Cplx\,|\,\Re e\,\lambda=\beta\}$, $\beta\in\R$, in the parameter dependent norms (\ref{normParam1}), (\ref{normParam2}). 
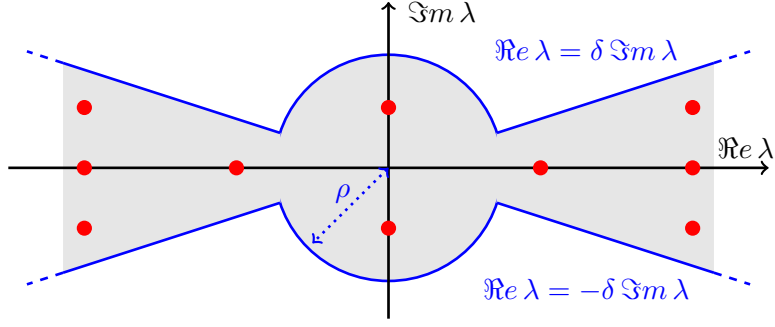
\begin{figure}[!ht]
\centering
\begin{tikzpicture}
\draw [draw=blue,fill=gray!20,line width=1pt] (0,0) circle (1.5);
\draw[draw=none,fill=gray!20](1.42,0.4635)--(4.28,1.39)--(4.28,-1.39)--(1.42,-0.4635)--cycle;
\draw[draw=none,fill=gray!20](-1.42,0.4635)--(-4.28,1.39)--(-4.28,-1.39)--(-1.42,-0.4635)--cycle;
\draw[draw=black,line width=1pt,->](-5,0)--(5,0);
\draw[draw=black,line width=1pt,->](0,-2)--(0,2.2);
\draw[draw=blue,line width=1pt,dotted,<->](0,0)--(-1,-1);
\draw[rotate=18,draw=blue,line width=1pt](1.5,0)--(4.5,0);
\draw[rotate=-18,draw=blue,line width=1pt](1.5,0)--(4.5,0);
\draw[rotate=18,draw=blue,line width=1pt,dashed](4.5,0)--(5,0);
\draw[rotate=-18,draw=blue,line width=1pt,dashed](4.5,0)--(5,0);
\draw[rotate=18,draw=blue,line width=1pt](-1.5,0)--(-4.5,0);
\draw[rotate=-18,draw=blue,line width=1pt](-1.5,0)--(-4.5,0);
\draw[rotate=18,draw=blue,line width=1pt,dashed](-4.5,0)--(-5,0);
\draw[rotate=-18,draw=blue,line width=1pt,dashed](-4.5,0)--(-5,0);
\node at (-0.6,-0.6) [anchor=south] {\textcolor{blue}{$\rho$}};
\node at (2.6,1.3) [anchor=south] {\textcolor{blue}{$\Re e\,\lambda = \delta\,\Im m\,\lambda$}};
\node at (2.6,-1.9) [anchor=south] {\textcolor{blue}{$\Re e\,\lambda = -\delta\,\Im m\,\lambda$}};
\filldraw [red,draw=none] (2,0) circle (0.1);
\filldraw [red,draw=none] (-2,0) circle (0.1);
\filldraw [red,draw=none] (0,0.8) circle (0.1);
\filldraw [red,draw=none] (0,-0.8) circle (0.1);
\filldraw [red,draw=none] (4,0) circle (0.1);
\filldraw [red,draw=none] (-4,0) circle (0.1);
\filldraw [red,draw=none] (4,0.8) circle (0.1);
\filldraw [red,draw=none] (4,-0.8) circle (0.1);
\filldraw [red,draw=none] (-4,0.8) circle (0.1);
\filldraw [red,draw=none] (-4,-0.8) circle (0.1);
\node at (4.7,0) [anchor=south] {$\Re e\,\lambda$};
\node at (0.7,1.8) [anchor=south] {$\Im m\,\lambda$};
\end{tikzpicture}
\caption{Lemma \ref{lemmaPapillon} ensures that the eigenvalues of $\mathscr{L}$ are located in an infinite bow tie of the complex plane.\label{fig local vp}}
\end{figure}
\begin{lemma}\label{lemmaPapillon}
There are real positive constants $\rho$, $\delta$ such that for all $\lambda\in\Cplx$ satisfying 
\[
|\lambda|>\rho\qquad\mbox{ and }\qquad |\Re e\,\lambda|<\delta\,|\Im m\,\lambda|
\]
(see Figure \ref{fig local vp}), $\mathscr{L}(\lambda):\mH^2_0(I)\to\mH^{-2}(I)$ is an isomorphism. Moreover, if $\varphi\in\mH^2_0(I)$ satisfies $\mathscr{L}(\lambda)\varphi=g\in\mH^{-2}(I)$, then there holds 
\begin{equation}\label{EstimUnifResolvante}
\|\varphi\|_{\mH^2(I,\,|\lambda|)} \le C\,\|g\|_{\mH^{-2}(I,\,|\lambda|)},
\end{equation}
where $C>0$ is independent of $g$ and $\lambda$.
\end{lemma}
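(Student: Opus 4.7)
The plan is to derive a coercivity-type bound for the real part of $\langle \mathscr{L}(\lambda)\varphi, \overline{\varphi}\rangle_I$ with respect to the parameter-dependent norm $\|\cdot\|_{\mH^2(I,|\lambda|)}$, uniformly in $\lambda$ within the proposed bow-tie. The estimate (\ref{EstimUnifResolvante}) will then follow by pairing with $\varphi$ and using the duality bound, while injectivity combined with Corollary \ref{coroInvertibility} (which says that non-invertibility is equivalent to $\lambda\in\Lambda$) will deliver the isomorphism.

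The first concrete step is to take $\psi=\varphi$ in the bilinear form (\ref{defSymbol}) and integrate by parts, using $\varphi\in\mH^2_0(I)$ to kill the boundary terms. This produces the clean identity
\[
\langle \mathscr{L}(\lambda)\varphi, \overline{\varphi}\rangle_I = \lambda^4 \|\varphi\|_{\mL^2(I)}^2 - 2\lambda^2 \|d_y\varphi\|_{\mL^2(I)}^2 + \|d_{yy}\varphi\|_{\mL^2(I)}^2 - k^4 \|\varphi\|_{\mL^2(I)}^2.
\]
Writing $\lambda=\beta+i\tau$, expanding $\lambda^2$ and $\lambda^4$, and taking the real part gives
\[
\bigl[(\beta^2-\tau^2)^2 - 4\beta^2\tau^2 - k^4\bigr]\|\varphi\|_{\mL^2(I)}^2 + 2(\tau^2-\beta^2)\|d_y\varphi\|_{\mL^2(I)}^2 + \|d_{yy}\varphi\|_{\mL^2(I)}^2.
\]

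The second step is to choose $\delta$ small enough and $\rho$ large enough so that each of the three bracket coefficients above is bounded below by a positive multiple of its counterpart in $\|\varphi\|_{\mH^2(I,|\lambda|)}^2$ (which is equivalent to $|\lambda|^4\|\varphi\|^2+|\lambda|^2\|d_y\varphi\|^2+\|d_{yy}\varphi\|^2$ for $|\lambda|$ large). A short algebraic check using $|\beta|\le\delta|\tau|$ yields $(\beta^2-\tau^2)^2-4\beta^2\tau^2\ge(1-6\delta^2+\delta^4)\tau^4$, which dominates $|\lambda|^4$ provided $\delta^2<1/6$; the $|\lambda|^2$-level coefficient is handled similarly; and the fixed term $-k^4$ is absorbed by requiring $|\lambda|>\rho$ with $\rho$ large compared to $k$. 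This yields the uniform coercivity
\[
\Re e\,\langle \mathscr{L}(\lambda)\varphi, \overline{\varphi}\rangle_I \ge c\,\|\varphi\|_{\mH^2(I,|\lambda|)}^2,\qquad c>0.
\]

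The third and final step is to combine this coercivity with the bound $|\langle g, \overline{\varphi}\rangle_I|\le\|g\|_{\mH^{-2}(I,|\lambda|)}\|\varphi\|_{\mH^2(I,|\lambda|)}$ (definition (\ref{normParam2})) applied to $g:=\mathscr{L}(\lambda)\varphi$, which immediately gives (\ref{EstimUnifResolvante}) upon dividing by $\|\varphi\|_{\mH^2(I,|\lambda|)}$. In particular, $\mathscr{L}(\lambda)\varphi=0$ forces $\varphi=0$, so $\lambda\notin\Lambda$; Corollary \ref{coroInvertibility} then upgrades this to the isomorphism property. The only genuinely delicate point is the algebraic verification that the $\|\varphi\|_{\mL^2(I)}^2$-coefficient stays coercive under the $\beta$-perturbation: the negative cross term $-4\beta^2\tau^2$ scales like $\delta^2\tau^4$, and it is precisely the requirement that this cannot overcome the leading $\tau^4$ which dictates the smallness of $\delta$ and the bow-tie opening in Figure \ref{fig local vp}.
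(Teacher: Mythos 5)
Your proof is correct, but it follows a genuinely different route from the paper. The paper first establishes the estimate only on the imaginary axis (Lemma \ref{resultPrelim}, via Lax--Milgram applied to (\ref{SymbolComplexe})), and then extends it into the sector by a perturbation argument: writing $\lambda=\pm i|\lambda|e^{i\psi}$ and $\tilde\lambda=\pm i|\lambda|$, it compares $\mathscr{L}(\lambda)$ with $\mathscr{L}(\tilde\lambda)$, bounds the discrepancy by $C|e^{2i\psi}-1|\,\|\varphi\|_{\mH^2(I,|\lambda|)}$, and absorbs it for a small opening angle $\delta$. You instead prove uniform coercivity directly off the axis: taking $\psi=\varphi$ in (\ref{defSymbol}), integrating by parts, and extracting the real part with $\lambda=\beta+i\tau$ gives the coefficients $(\beta^2-\tau^2)^2-4\beta^2\tau^2-k^4$, $2(\tau^2-\beta^2)$ and $1$, and your algebra is right: on $\beta^2\le\delta^2\tau^2$ the first quantity is minimized at $\beta^2=\delta^2\tau^2$, yielding exactly $(1-6\delta^2+\delta^4)\tau^4$, so small $\delta$ and large $\rho$ give $\Re e\,\langle\mathscr{L}(\lambda)\varphi,\overline{\varphi}\rangle_I\ge c\|\varphi\|^2_{\mH^2(I,|\lambda|)}$ uniformly (using that for $|\lambda|\ge1$ the norm (\ref{normParam1}) is comparable to $|\lambda|^4\|\varphi\|^2+|\lambda|^2\|d_y\varphi\|^2+\|d_{yy}\varphi\|^2$), and the duality bound (\ref{normParam2}) then gives \eqref{EstimUnifResolvante}. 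Two small remarks: once you have this coercivity you do not even need Corollary \ref{coroInvertibility} for the isomorphism, since Lax--Milgram applies directly for each $\lambda$ in the sector (your use of the corollary is nevertheless legitimate and not circular, as it rests only on Lemma \ref{resultPrelim} and the analytic Fredholm theorem); and the trade-off between the two approaches is that yours is shorter and self-contained because the biharmonic symbol happens to have sign-definite real-part coefficients in the whole bow tie, whereas the paper's perturbation-from-the-axis argument is the more robust template when a direct real-part computation off the axis does not produce usable signs.
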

\begin{proof}
Lemma \ref{resultPrelim} together with identity (\ref{SymbolComplexe}) ensure that (\ref{EstimUnifResolvante}) holds for $\lambda\in i\R$ with $|\lambda|\ge 2k$. Now let us consider the case $\lambda\notin i\R$. We write $\lambda$ as $\lambda=\pm i|\lambda|e^{i\psi}$ with $\psi\in(-\pi/2;\pi/2)$. Set $\tilde{\lambda}=\pm i|\lambda|$. Since $|\tilde{\lambda}|=|\lambda|$, by definition of the parameter dependent norm (\ref{normParam1}), for $\varphi\in\mH^2_0(I)$, we have $\|\varphi\|_{\mH^2(I,\,|\lambda|)}=\|\varphi\|_{\mH^2(I,\,|\tilde{\lambda}|)}$. Define $\tilde{g}=\mathscr{L}(\tilde{\lambda})\varphi$. Assume that $|\lambda|\ge k$. In that case, according to the first step of the proof, we have
\begin{equation}\label{EstimParam1}
\|\varphi\|_{\mH^2(I,\,|\lambda|)}=\|\varphi\|_{\mH^2(I,\,|\tilde{\lambda}|)} \le C\,\|\tilde{g}\|_{\mH^{-2}(I,\,|\tilde{\lambda}|)}.
\end{equation}
Here and in what follows, $C>0$ is a constant which can change from one line to another but which is independent of $\lambda$, $\varphi$. Now we can write
\[
\|\tilde{g}\|_{\mH^{-2}(I,\,|\tilde{\lambda}|)}=\|\tilde{g}\|_{\mH^{-2}(I,\,|\lambda|)}  \le \|g\|_{\mH^{-2}(I,\,|\lambda|)}+\|\tilde{g}-g\|_{\mH^{-2}(I,\,|\lambda|)}.
\]
A direct calculation gives, for all $\psi\in\mH^2_0(I)$,
\[
\langle\tilde{g}-g,\overline{\psi}\rangle_I=\langle \mathscr{L}(\tilde{\lambda})\varphi-\mathscr{L}(\lambda) \varphi,\overline{\psi}\rangle_{I} = (\tilde{\lambda}^2-\lambda^2)\Big(\int_{I}\varphi(\lambda^2\overline{\psi}+2d_{yy} \overline{\psi})\,dy +\int_{I}\varphi(\tilde{\lambda}^2\overline{\psi})\,dy\Big).
\]
We deduce that 
\begin{equation}\label{EstimParam2}
\|\tilde{g}-g\|_{\mH^{-2}(I,\,|\lambda|)}\le C\,|\tilde{\lambda}^2-\lambda^2|\,\|\varphi\|_{\mrm{L}^2(I)}\le C\,|e^{2i\psi}-1|\,\|\varphi\|_{\mH^2(I,\,|\lambda|)}.
\end{equation}
Thus for all $\varsigma>0$, there is $\delta$ small enough so that one has $\|\tilde{g}-g\|_{\mH^{-2}(I,\,|\lambda|)}\le \varsigma\,\|\varphi\|_{\mH^2(I,\,|\lambda|)}$ for all $\lambda=\pm i|\lambda|e^{i\psi}$ such that $|\psi|<\delta$. Gathering the latter estimate, (\ref{EstimParam1}) and (\ref{EstimParam2}) leads to 
\[
\|\varphi\|_{\mH^2(I,\,|\lambda|)} \le C\,\|g\|_{\mH^{-2}(I,\,|\lambda|)}+C\,\varsigma\,\|\varphi\|_{\mH^2(I,\,|\lambda|)}.
\]
Taking $\varsigma$ sufficiently small ($\varsigma=1/(2C)$ for example), finally we obtain (\ref{EstimUnifResolvante}).
\end{proof}
\noindent From this lemma, we deduce the following result.
\begin{theorem}\label{MainThm}
Let $\beta\in\R$ be such that $\mathscr{L}$ has no eigenvalue on the line $\Re e\,\lambda=-\beta$. Then the operator $A_{\beta}:\mathring{\mW}^2_{\beta}(\Om)\to \mathring{\mW}^{2}_{-\beta}(\Om)^{\ast}$ defined in (\ref{DefOperateurs}) is an isomorphism.
\end{theorem}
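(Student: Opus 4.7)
The plan is to conjugate $A_\beta$ via the Fourier--Laplace transform $\mathcal{L}_{x\to\lambda}$ so as to reduce the statement to a family of one-dimensional problems parametrised by $\lambda\in\ell_{-\beta}=\{-\beta+is,\;s\in\R\}$. Since $\mathcal{L}_{x\to\lambda}$ is an isomorphism from $\mathring{\mW}^2_{\beta}(\Om)$ onto $\widehat{\mW}^2_{\beta}$ and (by duality) from $\mathring{\mW}^2_{-\beta}(\Om)^{\ast}$ onto $\widehat{\mW}^{2\ast}_{-\beta}$, and since taking the transform of the equation $A_{\beta}u=f$ yields exactly $\mathscr{L}(\lambda)\hat u(\lambda,\cdot)=\hat f(\lambda,\cdot)$ for almost every $\lambda\in\ell_{-\beta}$, the claim amounts to showing that the pointwise inverse $\hat f\mapsto\mathscr{L}(\cdot)^{-1}\hat f$ is bounded from $\widehat{\mW}^{2\ast}_{-\beta}$ into $\widehat{\mW}^2_{\beta}$. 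By the two Plancherel formulas (\ref{eq:Plancherel_1/2}) and (\ref{eq:Plancherel_1/2ast}), this reduces to a uniform resolvent estimate of the form
\[
	\|\mathscr{L}(\lambda)^{-1}g\|_{\mH^2(I,|\lambda|)}\le C\,\|g\|_{\mH^{-2}(I,|\lambda|)},\qquad \forall\lambda\in\ell_{-\beta},\;\forall g\in\mH^{-2}(I),
\]
with a constant $C$ independent of $\lambda\in\ell_{-\beta}$.

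First I would argue pointwise invertibility: by hypothesis the line $\ell_{-\beta}$ contains no element of $\Lambda$, so Corollary \ref{coroInvertibility} gives that $\mathscr{L}(\lambda):\mH^2_0(I)\to\mH^{-2}(I)$ is an isomorphism for every $\lambda\in\ell_{-\beta}$. The main issue is therefore uniformity in $\lambda$ of the norm of the inverse, measured in the parameter-dependent norms (\ref{normParam1})--(\ref{normParam2}). I would split $\ell_{-\beta}$ into a compact piece and its complement.

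On the unbounded piece, I would invoke Lemma \ref{lemmaPapillon}. Since $\ell_{-\beta}$ is a vertical line, there is $R>0$ such that every $\lambda\in\ell_{-\beta}$ with $|\Im m\,\lambda|\ge R$ satisfies simultaneously $|\lambda|>\rho$ and $|\Re e\,\lambda|=|\beta|<\delta|\Im m\,\lambda|$; on that tail the lemma supplies the desired estimate with a uniform constant. On the remaining compact piece $K:=\{\lambda\in\ell_{-\beta}:|\Im m\,\lambda|\le R\}$, the map $\lambda\mapsto\mathscr{L}(\lambda)\in\mathcal{L}(\mH^2_0(I),\mH^{-2}(I))$ is analytic and pointwise invertible; hence $\lambda\mapsto\mathscr{L}(\lambda)^{-1}$ is continuous on the compact set $K$ and therefore bounded in the fixed norm of $\mathcal{L}(\mH^{-2}(I),\mH^2_0(I))$. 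Since on $K$ the parameter-dependent norms $\|\cdot\|_{\mH^2(I,|\lambda|)}$ and $\|\cdot\|_{\mH^{-2}(I,|\lambda|)}$ are equivalent to $\|\cdot\|_{\mH^2(I)}$ and $\|\cdot\|_{\mH^{-2}(I)}$ with constants depending only on $\sup_{\lambda\in K}|\lambda|$ and on $\beta$, this yields a uniform bound on $K$ as well. Combining the two pieces delivers the required global estimate.

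Once the resolvent bound is established, it translates, via Plancherel and the inversion of $\mathcal{L}_{x\to\lambda}$, into a continuous inverse for $A_\beta$: for $f\in\mathring{\mW}^2_{-\beta}(\Om)^{\ast}$ one defines $u:=\mathcal{L}^{-1}_{x\to\lambda}\big(\mathscr{L}(\lambda)^{-1}\hat f\big)$, which lies in $\mathring{\mW}^2_\beta(\Om)$ with norm controlled by $\|f\|_{\mathring{\mW}^2_{-\beta}(\Om)^{\ast}}$ and satisfies $A_\beta u=f$. The uniqueness follows from the same argument applied to $f=0$: the transform of any solution is then a.e.\ a zero of the pointwise invertible operator $\mathscr{L}(\lambda)$ and therefore vanishes. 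I expect the only subtlety to be the matching between the parameter-dependent norms on the compact piece and the bounded operator norm given by analytic Fredholm theory; everything else is a direct bookkeeping exercise with the Plancherel identities.
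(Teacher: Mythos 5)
Your proposal is correct and follows essentially the same route as the paper: pointwise invertibility of $\mathscr{L}(\lambda)$ on $\ell_{-\beta}$ from Corollary \ref{coroInvertibility}, the uniform estimate of Lemma \ref{lemmaPapillon} on the tail of the line combined with continuity of $\lambda\mapsto\mathscr{L}(\lambda)^{-1}$ (analytic Fredholm) on the compact remainder, and then the Plancherel identities to transfer the resolvent bound into invertibility of $A_\beta$, with injectivity obtained by transforming the homogeneous equation. Your extra remark about matching the parameter-dependent norms with the fixed norms on the compact piece is exactly the bookkeeping the paper leaves implicit, so there is no gap.
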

\begin{remark}\label{isoBelowk1}
Proposition \ref{propoCard} guarantees that for $k\in(0;k_1)$ ($k_1$ is the first positive threshold defined in (\ref{defThreshold})), we have $\Lambda\cap\R i=\emptyset$. From Theorem \ref{MainThm}, we deduce that when $k\in(0;k_1)$, the operator $A_{0}$ is an isomorphism from $\mH^2_0(\Om)$ to $\mH^{-2}(\Om)$. 
\end{remark}
\begin{proof}
Assume that $\mathscr{L}$ has no eigenvalue on the line $\Re e\,\lambda=-\beta$. Let us first suppose that $u\in\mathring{\mW}^2_{\beta}(\Om)$ is such that $A_{\beta}u=0$. Applying the partial Fourier-Laplace transform with respect to $x$, we obtain
\[
\mathscr{L}(\lambda)\hat{u}(\lambda,\cdot)=0,\qquad\forall\lambda\in\Cplx.
\]
From Corollary \ref{coroInvertibility}, we deduce that for all $\lambda\in\ell_{-\beta}$, $\hat{u}(\lambda,\cdot)=0$. From the properties of the inverse Fourier-Laplace transform, we deduce that $u\equiv0$. This shows that $A_{\beta}$ is injective.\\\\
We prove now that $A_\beta$ is onto. Let $f\in\mathring{\mW}^{2}_{-\beta}(\Om)^{\ast}$. Lemma \ref{lemmaPapillon} guarantees that for $\lambda\in\Cplx$ such that $\Re e\,\lambda=-\beta$ and $|\Im m\,\lambda|\ge \nu_{\beta}$, we have the estimate
\begin{equation}\label{estimResolv}
\|\mathscr{L}(\lambda)^{-1}\hat{f}(\lambda,\cdot)\|_{\mH^2(I,\,|\lambda|)} \le C\,\|\hat{f}(\lambda,\cdot)\|_{\mH^{-2}(I,\,|\lambda|)},
\end{equation}
where $C>0$ is independent of $\lambda$ and $\nu_{\beta}$ depends only $\beta$. For $\lambda\in[-\beta-i\nu_{\beta};-\beta+i\nu_{\beta}]$, the operator $\mathscr{L}(\lambda)$ is invertible according to Corollary \ref{coroInvertibility}. The continuity of $\lambda\mapsto\mathscr{L}(\lambda)^{-1}$, ensured by the analytic Fredholm theorem, guarantees that Estimate (\ref{estimResolv}) also holds for $\lambda$ in the compact set $[-\beta-i\nu_{\beta};-\beta+i\nu_{\beta}]$. Therefore (\ref{estimResolv})  is valid for all $\lambda$ such that $\Re e\,\lambda=-\beta$ with a constant $C>0$ independent of $\lambda$.\\
By definition
\[
	f\in \mathring{\mW}^{2}_{-\beta}(\Om)^{\ast}\quad\Rightarrow\quad \frac{1}{2\pi i}\,\int_{\ell_{-\beta}} \|\hat{f}(\lambda,\cdot)\|_{\mH^{-2}(I,|\lambda|)}^2\,d\lambda<+\infty.
\]
We deduce that
\begin{equation}\label{RepresentationSolution}
u(x,\cdot)=\frac{1}{2\pi i}\,\int_{\ell_{-\beta}}e^{\lambda x}\mathscr{L}(\lambda)^{-1}\hat{f}(\lambda,\cdot)\,d\lambda \quad \in\quad\mathring{\mW}^2_{\beta}(\Om)
\end{equation}
is solution of $A_{\beta}u=f$ with, by the Plancherel formulas \eqref{eq:Plancherel_1/2} and \eqref{eq:Plancherel_1/2ast}, $\|u\|_{\mathring{\mW}^2_{\beta}(\Om)}\leq C \|f\|_{\mathring{\mW}^{2}_{-\beta}(\Om)^{\ast}}.$
\end{proof}

\subsection{Source term problem in the reference strip with radiation conditions}
For $k<k_1$ ($k_1$ is the first positive threshold defined in (\ref{defThreshold})), as noticed in Remark \ref{isoBelowk1}, Problem \eqref{PbInitial} is well posed in $\mH^2_0(\Omega)$ in particular for locally supported $\mL^2$ source term. For $k>k_1$, the problem is not well posed in this setting. Indeed, since in that case $\mathscr{L}$ has an eigenvalue on the line $\Re e\,\lambda=0$, one can show that the range of $A_0$ is not closed. For $\beta\ne0$, the solution to Problem (\ref{PbInitial}) defined via the operator $A_{\beta}$ is \textit{a priori} exponentially growing as $x\to+\infty$ or as $x\to-\infty$.  The results of the previous section do not provide a solution which is physically acceptable. In what follows, we explain how to impose radiation conditions at infinity to construct a solution to Problem (\ref{PbInitial}) which decomposes as the sum of outgoing propagating modes (defined later) plus an exponentially decaying remainder.\\
\newline
In order to measure exponentially growing or decaying behaviours as $|x|\to\pm\infty$, for $\beta\in\R$, introduce the weighted Sobolev space $\mathring{\mathcal{W}}^2_{\beta}(\Om)$ defined as the completion of $\mathscr{C}^{\infty}_0(\Om)$ for the norm
\[
\|v\|_{\mathcal{W}^2_{\beta}(\Om)}=\Big
(\sum_{\alpha,\,\gamma\in\N,\ \alpha+\gamma\le 2}\|e^{-\beta |x|}\partial^{\alpha}_x\partial^{\gamma}_y v\|^2_{\mrm{L}^2(\Om)}\Big)^{1/2}.
\]
Remark the absolute value in the weight $e^{-\beta |x|}$. Due to this absolute value, observe that 
\be\label{eq:inclusion_W_beta}
\beta^1\le\beta^2\qquad\Rightarrow\qquad \mathring{\mathcal{W}}^2_{\beta^1}(\Om)\subset \mathring{\mathcal{W}}^2_{\beta^2}(\Om).
\ee
Note that this property is not true for the spaces $\mathring{\mW}^2_{\beta}(\Om)$ introduced in (\ref{FirstWeightedSpaces}). Observe also that we have $\mathring{\mathcal{W}}^2_{0}(\Om)=\mH^2_0(\Om)$. Let $\langle \cdot,\cdot\rangle_{\Om}$ stand for the bilinear duality pairing between $\mathring{\mathcal{W}}^2_{\beta}(\Om)^{\ast}$ and $\mathring{\mathcal{W}}^2_{\beta}(\Om)$, where $\mathring{\mathcal{W}}^2_{\beta}(\Om)^{\ast}$ is the topological dual space of $\mathring{\mathcal{W}}^2_{\beta}(\Om)$ endowed with the norm
\begin{equation}\label{DefNormAdjoint}
\|f\|_{\mathring{\mathcal{W}}^2_{\beta}(\Om)^{\ast}}=\sup_{v\in\mathring{\mathcal{W}}^2_{\beta}(\Om)\setminus\{0\}}\cfrac{|\langle f,\overline{v}\rangle_{\Om}|}{\|v\|_{\mathcal{W}^2_{\beta}(\Om)}}.
\end{equation}
Due to \eqref{eq:inclusion_W_beta}, we have
\be\label{eq:inclusion_W_betaast}
\beta^1\le\beta^2\qquad\Rightarrow\qquad \mathring{\mathcal{W}}^2_{\beta^2}(\Om)^{\ast}\subset \mathring{\mathcal{W}}^2_{\beta^1}(\Om)^{\ast}.
\ee
For $n\in\N^{\ast}$, pick $k\in(k_n;k_{n+1})$, the threshold wavenumbers $k_n$ being defined in (\ref{defThreshold}), and choose $\beta>0$, once for all, small enough such that $\{\lambda\in\Lambda\,|\,-\beta \le \Re e\,\lambda\le \beta\}=\Lambda\cap\R i\setminus\{0\}$. According to Corollary \ref{coroInvertibility} and Lemma \ref{lemmaPapillon}, we know that such a $\beta$ exists. We denote $\eta_1<\dots<\eta_P$ the positive real numbers (belonging to $(0;k)$ according to the proof of Lemma \ref{resultPrelim}) such that 
\begin{equation}\label{DefExposants}
\Lambda\cap\R i=\{\pm i\eta_p\}_{p=1}^{P}.
\end{equation}
For $p=1,\dots,P$, we define the propagating modes $w^{\pm}_{p}$ as
\begin{equation}\label{DefModesPropa}
w^{\pm}_{p}(x,y)=e^{\pm i\eta_p x}\varphi_p(y),
\end{equation}
where $\varphi_p$ is a non zero element of $\ker\,\mathscr{L}(i\eta_p)$. Observe that we have $(\Delta^2-k^4)w^{\pm}_{p}=0$. We normalize the $\varphi_p$ so that 
\begin{equation}\label{relationNormalization}
4\,\eta_p\int_{I}|d_y\varphi_p(y)|^2+\eta_p^2|\varphi_p(y)|^2\,dy=1.
\end{equation}
This special choice for the normalization will appear naturally in (\ref{calculFlux}). In the next step of the analysis, we shall use the following decomposition result. It can be proved exactly in the same manner as Theorem 5.4.2 of \cite{KoMR97} working with the residue theorem on formula (\ref{RepresentationSolution}) which is a result of the use of the Fourier transform in the unbounded direction. The proof of Proposition \ref{propoDecomposition} uses the fact that all modal exponents have an algebraic multiplicity of $1$, since the wavenumber is not a threshold wavenumber (see Proposition \ref{PropositionAlgMult}). In this statement and in what follows, $\chi^{\pm}\in\mathscr{C}^{\infty}(\R^2)$ is a cut-off function equal to one for $\pm x\ge 2L$ and to zero for $\pm x\le L$, for a given $L>0$. 

\begin{proposition}\label{propoDecomposition}
Assume that $k\in(k_n;k_{n+1})$, $n\in\N^{\ast}$, the threshold wavenumbers $k_n$ being defined in (\ref{defThreshold}). Assume that $u\in\mathring{\mathcal{W}}^2_{\beta}(\Om)$ is such that $(\Delta^2-k^4)u\in\mathring{\mathcal{W}}^2_{\beta}(\Om)^{\ast}\subset\mathring{\mathcal{W}}^2_{-\beta}(\Om)^{\ast}$. Then there holds the following representation
\begin{equation}\label{FormulaDecompoClamped}
u = \chi^+\sum_{p=1}^{P} (a^+_p w^{+}_{p}+a^{-}_p w^{-}_{p}) + \chi^-\sum_{p=1}^{P} (b^{-}_p w^{-}_{p}+b^{+}_p w^{+}_{p}) + \tilde{u}, 
\end{equation}
with coefficients $a^{\pm}_p$, $b^{\pm}_p\in\Cplx$ and $\tilde{u}\in \mathring{\mathcal{W}}^2_{-\beta}(\Om)$.
\end{proposition}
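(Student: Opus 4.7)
My plan follows the Kondratiev scheme: split $u$ via smooth cutoffs near $\pm\infty$, apply the partial Fourier-Laplace transform to each piece, and shift integration contours in $\Cplx$ to pick up the propagating-mode contributions through the residue theorem. Concretely, I set $\chi^0 := 1 - \chi^+ - \chi^-$ (compactly supported) and write $u = u^+ + u^- + u^0$ with $u^\pm := \chi^\pm u$, $u^0 := \chi^0 u$. Thanks to the supports of the cutoffs, $u^+ \in \mathring{\mW}^2_{-\beta}(\Om)$ (concentrated at $+\infty$, with $\mrm{L}^2$-weighted growth at most $e^{\beta x}$), $u^- \in \mathring{\mW}^2_{\beta}(\Om)$, and $u^0$ is compactly supported. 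Differentiating gives $(\Delta^2-k^4)u^\pm = F^\pm$ with $F^\pm := \chi^\pm f + [\Delta^2,\chi^\pm]u$. The commutator has compactly supported coefficients (derivatives of $\chi^\pm$), so it lies in every dual weighted space; combined with the hypothesis $f \in \mathring{\mathcal{W}}^2_\beta(\Om)^{\ast} = \mathring{\mW}^2_\beta(\Om)^{\ast} \cap \mathring{\mW}^2_{-\beta}(\Om)^{\ast}$, this places $F^+$ and $F^-$ in $\mathring{\mW}^2_\beta(\Om)^{\ast} \cap \mathring{\mW}^2_{-\beta}(\Om)^{\ast}$.

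Applying $\mathcal{L}_{x \to \lambda}$ to $(\Delta^2-k^4)u^+ = F^+$ and inverting on $\ell_\beta$ gives
\[
u^+(x,\cdot) = \frac{1}{2\pi i}\int_{\ell_\beta} e^{\lambda x}\,\mathscr{L}(\lambda)^{-1}\,\widehat{F^+}(\lambda,\cdot)\,d\lambda.
\]
I then deform the contour from $\ell_\beta$ down to $\ell_{-\beta}$. Because $F^+$ is left-supported, $\widehat{F^+}$ is holomorphic in the strip $\{-\beta < \Re e\,\lambda \le \beta\}$, and by the choice of $\beta$ together with Corollary \ref{coroInvertibility} the map $\lambda \mapsto \mathscr{L}(\lambda)^{-1}$ is meromorphic in the closed strip $\{-\beta \le \Re e\,\lambda \le \beta\}$ with singularities only at $\pm i\eta_p$, $p = 1, \dots, P$. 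Proposition \ref{PropositionAlgMult} makes these poles simple and forces the residues to be rank-one operators with range $\mrm{span}(\varphi_p)$ (using $\mathscr{L}(\lambda) = \mathscr{L}(-\lambda)$ so that $\ker\mathscr{L}(i\eta_p) = \ker\mathscr{L}(-i\eta_p) = \mrm{span}(\varphi_p)$). The uniform bound \eqref{EstimUnifResolvante} of Lemma \ref{lemmaPapillon} kills the horizontal contributions as $|\Im m\,\lambda|\to\infty$, and the residue theorem delivers
\[
u^+(x,\cdot) = \tilde{u}^+(x,\cdot) + \sum_{p=1}^{P}\big(\alpha^+_p\,w^+_p + \alpha^-_p\,w^-_p\big),
\]
where $\tilde{u}^+$ is the integral on $\ell_{-\beta}$ and thus lies in $\mathring{\mW}^2_\beta(\Om)$ by Plancherel (decay at $+\infty$). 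A symmetric argument on $u^-$, this time shifting from $\ell_{-\beta}$ up to $\ell_\beta$, yields $u^- = \tilde{u}^- + \sum_p(\gamma^+_p w^+_p + \gamma^-_p w^-_p)$ with $\tilde{u}^- \in \mathring{\mW}^2_{-\beta}(\Om)$ (decay at $-\infty$).

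To conclude, I set $a^\pm_p := \alpha^\pm_p$, $b^\pm_p := \gamma^\pm_p$ and
\[
\tilde{u} := u - \chi^+\sum_{p=1}^{P}(a^+_p w^+_p + a^-_p w^-_p) - \chi^-\sum_{p=1}^{P}(b^+_p w^+_p + b^-_p w^-_p),
\]
and check that $\tilde{u}$ coincides with $\tilde{u}^+$ on $\{x\ge 2L\}$ and with $\tilde{u}^-$ on $\{x\le -2L\}$ (the other $u$-pieces vanish in the corresponding regions), while remaining bounded in between; combining the two decay estimates gives $\tilde{u} \in \mathring{\mathcal{W}}^2_{-\beta}(\Om)$, which is the required decomposition. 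The main obstacles are: (i) confirming that each residue really produces the propagating mode $w^\pm_p$ without any polynomial $x$-prefactor — this is exactly where the algebraic simplicity provided by Proposition \ref{PropositionAlgMult} is essential, since a multiple pole of $\mathscr{L}(\lambda)^{-1}$ would inject terms of the form $x\,e^{\pm i\eta_p x}\varphi_p(y)$ from differentiating $e^{\lambda x}$; and (ii) rigorously justifying the contour deformation, which is precisely the purpose of the parameter-dependent norms and the uniform resolvent estimate \eqref{EstimUnifResolvante}.
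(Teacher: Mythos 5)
Your proposal is correct and takes essentially the same route as the paper, which proves this proposition by invoking the Kondratiev-type argument of \cite[Thm.~5.4.2]{KoMR97}: localize near each outlet with cutoffs, apply the Fourier--Laplace transform, shift the contour between $\ell_{\beta}$ and $\ell_{-\beta}$, and collect the residues at the poles $\pm i\eta_p$, which are simple precisely because of the algebraic multiplicity one established in Proposition \ref{PropositionAlgMult}, the remainder being controlled via Lemma \ref{lemmaPapillon} and the Plancherel identity. Only cosmetic remarks: $F^+$ is supported near $+\infty$ (it is right-supported, not ``left-supported'', though the holomorphy region $\Re e\,\lambda>-\beta$ you claim for $\widehat{F^+}$ is the correct one), and you only need the inclusion $\mathring{\mathcal{W}}^2_{\beta}(\Om)^{\ast}\subset\mathring{\mW}^2_{\beta}(\Om)^{\ast}\cap\mathring{\mW}^2_{-\beta}(\Om)^{\ast}$ rather than the stated equality.
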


\begin{remark}
Observe that Formula (\ref{FormulaDecompoClamped}) for the clamped problem is the equivalent of (\ref{general_form}) for the simply supported problem. But again we emphasize that the tools to derive the two decompositions are different (see the discussion at the beginning of the section). 
\end{remark}
\noindent In the sequel, we will say that for any $u\in \mathring{\mathcal{W}}^2_{\beta}(\Om)$,
\begin{equation}\label{outgoingClamped}\tag{\text{RC}}
u\text{ is outgoing iff}\quad u= \chi^+\sum_{p=1}^{P} a_p\, w^{+}_{p} + \chi^-\sum_{p=1}^{P} b_p\,w^{-}_{p} + \tilde{u}, 
\end{equation}
with coefficients $a_p$, $b_p\in\Cplx$ and $\tilde{u}\in \mathring{\mathcal{W}}^2_{-\beta}(\Om)$. We introduce the space with detached asymptotic (see, e.g., the reviews \cite{Naza99a,Naza99b}) $\mathcal{W}^{\mrm{out}}(\Om)$ that consists of functions in $ \mathring{\mathcal{W}}^2_{\beta}(\Om)$ that satisfies \eqref{outgoingClamped}. The space $\mathcal{W}^{\mrm{out}}(\Om)$ is a Hilbert space for the inner product naturally associated with the norm 
\[
\|u\|_{\mathcal{W}^{\mrm{out}}(\Om)} = \Big( \sum_{p=1}^{P}|a_p|^2+\sum_{p=1}^{P}|b_p|^2+\|\tilde{u}\|^2_{\mathcal{W}^2_{-\beta}(\Om)} \Big)^{1/2}.
\]
For $u\in\mathcal{W}^{\mrm{out}}\subset \mathring{\mathcal{W}}^2_{\beta}(\Om)$, the map $\phi\mapsto\mathfrak{a}(u,\phi)$ with
\[
\mathfrak{a}(u,\phi)=\int_{\Om} \Delta u\,\overline{\Delta \phi}-k^4 u\,\overline{\phi}\,dxdy
\]
is well-defined in $\mathring{\mathcal{W}}^{2}_{-\beta}(\Om)$. Although $u\notin\mathring{\mathcal{W}}^{2}_{-\beta}(\Om)$ in general when $u\in\mathcal{W}^{\mrm{out}}$, we will extend it as a map in $\mathring{\mathcal{W}}^{2}_{\beta}(\Om)$. For $\phi\in\mathscr{C}^{\infty}_0(\Om)$, applying Green's formula yields
\begin{equation}\label{AfterGreen}
\mathfrak{a}(u,\phi)=
\sum_{p=1}^Pa_p\int_{\Om} (\Delta\Delta-k^4\mrm{Id})(\chi^+ w^{+}_{p})\,\overline{\phi}\,dxdy+b_p\int_{\Om} (\Delta\Delta-k^4\mrm{Id})(\chi^- w^{-}_{p})\,\overline{\phi}\,dxdy+\mathfrak{a}(\tilde{u},\phi).
\end{equation}
Since the support of $(\Delta\Delta-k^4\mrm{Id})(\chi^{\pm} w^{\pm}_{p})$ is compact, $p=1,\dots,P$, we deduce that there is a constant $C>0$ independent of $\phi\in\mathscr{C}^{\infty}_0(\Om)$ such that 
\begin{equation}\label{EstimateContinuous}
|\mathfrak{a}(u,\phi)|\le C\,\|u\|_{\mathcal{W}^{\mrm{out}}(\Om)}\|\phi\|_{\mathcal{W}^2_{\beta}(\Om)}. 
\end{equation}
By density of $\mathscr{C}^{\infty}_0(\Om)$ in $\mathring{\mathcal{W}}^2_{\beta}(\Om)$, we deduce that $\phi\mapsto\mathfrak{a}(u,\phi)$ can be uniquely extended as a continuous map in $\mathring{\mathcal{W}}^{2}_{\beta}(\Om)$. This discussion allows us to define the linear operator $\mathscr{A}^{\mrm{out}}$ such that
\begin{equation}\label{defOpFredholm}
\begin{array}{lccc}
\mathscr{A}^{\mrm{out}}: & \hspace{-0.2cm}\mathcal{W}^{\mrm{out}}(\Om) & \hspace{-0.1cm}\longrightarrow & \hspace{-0.1cm}\mathring{\mathcal{W}}^{2}_{\beta}(\Om)^{\ast}\\
 & \hspace{-0.2cm}u = \chi^+\dsp\sum_{p=1}^{P} a_p\, w^{+}_{p} + \chi^-\sum_{p=1}^{P} b_p\, w^{-}_{p} + \tilde{u} & \hspace{-0.1cm}\longmapsto & \hspace{-0.1cm}\mathscr{A}^{\mrm{out}}u
\end{array} \hspace{-0.2cm}
\end{equation}
where $\mathscr{A}^{\mrm{out}}u$ is the unique element of $\mathring{\mathcal{W}}^{2}_{\beta}(\Om)^{\ast}$ such that $\langle \mathscr{A}^{\mrm{out}}u,\overline{\phi}\rangle_{\Om}=\mathfrak{a}(u,\phi)$ for all $\phi\in\mathscr{C}^{\infty}_0(\Om)$. We deduce from (\ref{AfterGreen}) that for $v\in\mathring{\mathcal{W}}^{2}_{\beta}(\Om)$, we have
\[
\langle \mathscr{A}^{\mrm{out}}u,\overline{v}\rangle_{\Om} =\sum_{p=1}^Pa_p\int_{\Om} (\Delta\Delta-k^4\mrm{Id})(\chi^+ w^{+}_{p})\,\overline{v}\,dxdy+b_p\int_{\Om} (\Delta\Delta-k^4\mrm{Id})(\chi^- w^{-}_{p})\,\overline{v}\,dxdy+\mathfrak{a}(\tilde{u},v).
\]
\begin{theorem}\label{thmIsom}
Assume that $k\in(k_n;k_{n+1})$, $n\in\N^{\ast}$, the threshold wavenumbers $k_n$ being defined in (\ref{defThreshold}). The operator $\mathscr{A}^{\mrm{out}}$ defined in (\ref{defOpFredholm}) is an isomorphism.
\end{theorem}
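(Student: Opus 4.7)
The plan is to establish injectivity and surjectivity of $\mathscr{A}^{\mrm{out}}$ separately, relying on Theorem \ref{MainThm}, the decomposition of Proposition \ref{propoDecomposition}, and a flux identity on $\Sigma_{\pm R}$ made possible by the normalization \eqref{relationNormalization}.

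For surjectivity, let $f\in\mathring{\mathcal{W}}^{2}_{\beta}(\Om)^{\ast}$. Since $\mathring{\mW}^{2}_{\beta}(\Om)\subset\mathring{\mathcal{W}}^{2}_{\beta}(\Om)$, by duality $\mathring{\mathcal{W}}^{2}_{\beta}(\Om)^{\ast}\subset\mathring{\mW}^{2}_{\beta}(\Om)^{\ast}$. By the choice of $\beta$, the line $\Re e\,\lambda=\beta$ contains no eigenvalue of $\mathscr{L}$, so Theorem \ref{MainThm} applied with weight $-\beta$ produces a unique $u_-\in\mathring{\mW}^{2}_{-\beta}(\Om)$ with $A_{-\beta}u_-=f$. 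Since $u_-$ decays exponentially as $x\to-\infty$, the decomposition of Proposition \ref{propoDecomposition} simplifies to
\[
u_-=\chi^+\sum_{p=1}^{P}\bigl(a^+_p w^+_p+a^-_p w^-_p\bigr)+\tilde u_-,\qquad \tilde u_-\in\mathring{\mathcal{W}}^{2}_{-\beta}(\Om),
\]
because the $\chi^-$ block would otherwise produce non-decaying contributions at $-\infty$. The terms $a^-_p w^-_p$ at $+\infty$ are left-going, hence incoming, and must be suppressed. Since $(\Delta^2-k^4)w^-_p=0$ throughout $\Om$, subtracting the complete mode does not alter the right-hand side, so I set
\[
u^{\mrm{out}}:=u_- -\sum_{p=1}^{P}a^-_p\,w^-_p.
\]
Writing $1-\chi^+=\chi^-+(1-\chi^+-\chi^-)$ and rearranging yields
\[
u^{\mrm{out}}=\chi^+\sum_{p=1}^{P}a^+_p w^+_p+\chi^-\sum_{p=1}^{P}(-a^-_p)\,w^-_p+\tilde u^{\mrm{out}},
\]
with $\tilde u^{\mrm{out}}=\tilde u_- -(1-\chi^+-\chi^-)\sum_p a^-_p w^-_p$. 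The second piece is compactly supported, hence belongs to $\mathring{\mathcal{W}}^{2}_{-\beta}(\Om)$, so $u^{\mrm{out}}\in\mathcal{W}^{\mrm{out}}(\Om)$. That $\mathscr{A}^{\mrm{out}}u^{\mrm{out}}=f$ follows by testing the classical identity $(\Delta^2-k^4)u^{\mrm{out}}=f$ against $\phi\in\mathscr{C}^{\infty}_{0}(\Om)$ and extending by density in $\mathring{\mathcal{W}}^{2}_{\beta}(\Om)$.

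For injectivity, let $u\in\mathcal{W}^{\mrm{out}}(\Om)$ with $\mathscr{A}^{\mrm{out}}u=0$, decomposed as $u=\chi^+\sum_p a_p w^+_p+\chi^-\sum_p b_p w^-_p+\tilde u$. Elliptic regularity gives $\Delta^2 u=k^4 u$ in $\Om$, and the clamped boundary conditions built into $\mathring{\mathcal{W}}^{2}_{\beta}(\Om)$ make Green's biharmonic identity on $\Om_R=(-R,R)\times I$ with $R>L$ reduce to
\[
0=\int_{\Om_R}\bigl((\Delta^2 u)\bar u-u(\Delta^2\bar u)\bigr)\,dxdy=J(R)-J(-R),
\]
where $J(x):=\int_0^1\bigl[\partial_x\Delta u\,\bar u-\Delta u\,\overline{\partial_x u}+\partial_x u\,\overline{\Delta u}-u\,\overline{\partial_x\Delta u}\bigr]\,dy$. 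Choosing $\varphi_p$ real (possible since $\mathscr{L}(i\eta_p)$ has real coefficients and its kernel is one-dimensional), and symmetrizing $\langle\mathscr{L}(i\eta_p)\varphi_p,\varphi_q\rangle_I=\langle\mathscr{L}(i\eta_q)\varphi_q,\varphi_p\rangle_I=0$ yields, for $\eta_p\ne\eta_q$,
\[
\int_0^1(\varphi_p''-\eta_p^2\varphi_p)\,\varphi_q\,dy=\tfrac{1}{2}(\eta_q^2-\eta_p^2)\int_0^1\varphi_p\,\varphi_q\,dy.
\]
Plugging the asymptotic form of $u$ into $J(\pm R)$, the $(p,q)$-cross-terms with $p\ne q$ come weighted by $(\eta_p+\eta_q)^2(\eta_p-\eta_q)$ and thus reduce to bounded remainders oscillating in $R$ at frequencies $\eta_p-\eta_q$, while contributions involving $\tilde u$ are $O(e^{-\beta R})$ (using interior elliptic regularity and trace estimates). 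The diagonal $p=q$ terms collapse, precisely because of the normalization \eqref{relationNormalization} enforcing $\int_0^1(|\varphi_p'|^2+\eta_p^2|\varphi_p|^2)\,dy=1/(4\eta_p)$, to give $J(R)\sim-i\sum_p|a_p|^2$ and $J(-R)\sim i\sum_p|b_p|^2$ as $R\to+\infty$. Averaging $J(R)-J(-R)=0$ over an interval $[R_0,R_0+M]$ and letting $M\to\infty$ kills both the oscillations (by Riemann--Lebesgue) and the exponential remainders, leaving $\sum_p(|a_p|^2+|b_p|^2)=0$. Hence $a_p=b_p=0$ for every $p$, so $u=\tilde u\in\mathring{\mathcal{W}}^{2}_{-\beta}(\Om)\subset\mathring{\mW}^{2}_{\beta}(\Om)$ satisfies $A_{\beta}u=0$; Theorem \ref{MainThm} on the eigenvalue-free line $\Re e\,\lambda=-\beta$ then gives $u\equiv0$.

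The main obstacle is the flux calculation. The eigenfunctions $\varphi_p$ are not mutually $\mrm{L}^2$-orthogonal, so diagonal decoupling is not automatic. What saves the argument is the prefactor $\eta_p+\eta_q$ produced by the combination $\partial_x\Delta u\,\bar u-\Delta u\,\overline{\partial_x u}$ together with the symmetrized eigenvalue relation above, which conspire to turn off-diagonal terms into pure oscillations in $R$; the specific weight in \eqref{relationNormalization} is then exactly what makes each diagonal mode contribute $\pm i$ times its energy, converting flux conservation into the coercive identity needed to eliminate the propagating part.
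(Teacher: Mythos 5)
Your proof is correct and follows essentially the same architecture as the paper's: surjectivity via Theorem~\ref{MainThm} in a weighted space combined with Proposition~\ref{propoDecomposition} and subtraction of the ``incoming'' whole modes, and injectivity by reducing to the vanishing of the propagating coefficients and then invoking the injectivity of $A_{\pm\beta}$. The only visible divergence is that the paper deduces $\sum_p(|a_p|^2+|b_p|^2)=0$ directly from the biorthogonality relations of Proposition~\ref{PropositionFluxNRJ} applied to the already-defined form $q_{\Om}$, whereas you rederive the same flux identity from scratch via Green's formula on $\Om_R$, symmetrization of $\langle\mathscr{L}(i\eta_p)\varphi_p,\varphi_q\rangle_I$, and a Ces\`aro averaging in $R$; the averaging is slightly heavier machinery than the paper's observation that $q_{\Om}$ is $H$-independent so the oscillating cross-terms have zero coefficient, but both routes arrive at the same conclusion (also, you use $A_{-\beta}$ where the paper uses $A_{\beta}$ for surjectivity, a purely symmetric choice).
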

\begin{remark}
Let us reformulate Theorem \ref{thmIsom} in order to compare it with Theorem \ref{th_ref}. For $f \in \mathring{\mathcal{W}}^{2}_{\beta}(\Om)^{\ast}$, Problem (\ref{PbInitial}) has a unique solution $u$ in $\mathcal{W}^{\mrm{out}}(\Om)$. In particular, for all $\beta>0$, any compactly supported function $f \in L^2(\Om)$ belongs to $\mathring{\mathcal{W}}^{2}_{\beta}(\Om)^{\ast}$ while the solution $u \in \mathcal{W}^{\mrm{out}}(\Om)$ belongs to $\mH^2_{\rm loc}(\Om)$ and satisfies the radiation conditions. Note that the result of Theorem \ref{thmIsom} is slightly stronger than the one of Theorem \ref{th_ref} concerning the assumptions for the source term. Indeed the functions of $\mathring{\mathcal{W}}^{2}_{\beta}(\Om)^{\ast}$ do not need to be compactly supported. 
\end{remark}
\noindent In order to prove Theorem \ref{thmIsom}, we need to establish an intermediate result. Define $\mathcal{W}^{\dagger}(\Om)$ the space of functions $v$ of $\mathring{\mathcal{W}}^2_{\beta}(\Om)$ that admit the representation 
\begin{equation}\label{decompositionSpaceDetached}
v = \chi^+\sum_{p=1}^{P} (a^+_p w^{+}_{p}+a^{-}_p w^{-}_{p}) + \chi^-\sum_{p=1}^{P} (b^{-}_p w^{-}_{p}+b^{+}_p w^{+}_{p}) + \tilde{v}, 
\end{equation}
with coefficients $a^{\pm}_p$, $b^{\pm}_p\in\Cplx$ and $\tilde{v}\in \mathring{\mathcal{W}}^2_{-\beta}(\Om)$. Define also the symplectic (sesquilinear and anti-hermitian) form $q_{\Om}(\cdot,\cdot)$ such that for all $u$, $v\in\mathcal{W}^{\dagger}(\Om)$, we have
\begin{equation}\label{FormSymp1}
q_{\Om}(u,v)=\langle (\Delta^2-k^4)u,\overline{v}\rangle_{\Om}-\langle (\Delta^2-k^4)\overline{v},u\rangle_{\Om}.
\end{equation}
Note that for $u$, $v\in\mathcal{W}^{\dagger}(\Om)$, the maps $(\Delta^2-k^4)u$, $(\Delta^2-k^4)\overline{v}$ are defined as elements of $\mathring{\mathcal{W}}^{2}_{\beta}(\Om)^{\ast}$ using an extension similar to what has been done above. As a consequence, we have
\[
\begin{array}{lcl}
q_{\Om}(u,v)&\hspace{-0.2cm}=&\hspace{-0.5cm}\dsp\phantom{-}\sum_{\pm}\sum_{p=1}^Pa^{\pm}_p(u)\int_{\Om} (\Delta\Delta-k^4\mrm{Id})(\chi^+ w^{\pm}_{p})\,\overline{v}\,dxdy+b^{\pm}_p(u)\int_{\Om} (\Delta\Delta-k^4\mrm{Id})(\chi^- w^{\pm}_{p})\,\overline{v}\,dxdy\\[8pt]
&\hspace{-0.2cm}&\hspace{-0.5cm}-\dsp\sum_{\pm}\sum_{p=1}^P\overline{a^{\pm}_p(v)}\int_{\Om}u\,(\Delta\Delta-k^4\mrm{Id})(\chi^+ \overline{w^{\pm}_{p}})\,dxdy+\overline{b^{\pm}_p(v)}\int_{\Om}u\,(\Delta\Delta-k^4\mrm{Id})(\chi^- \overline{w^{\pm}_{p}})\,dxdy\\[12pt]
&\hspace{-0.2cm}&\hspace{-0.5cm}+\mathfrak{a}(\tilde{u},v)-\mathfrak{a}(u,\tilde{v}).
\end{array}
\] 
Here  $a^{\pm}_p(u)$, $b^{\pm}_p(u)$ (resp. $a^{\pm}_p(v)$, $b^{\pm}_p(v)$ ) refer to the constants appearing in (\ref{decompositionSpaceDetached}) in the decomposition of $u$ (resp. $v$). In the next proposition, we show some biorthogonality relations for the modes with respect to the form $q_{\Om}(\cdot,\cdot)$. The proof is a computation, it can be skipped without altering the understanding. 

\begin{proposition}\label{PropositionFluxNRJ}
Assume that $k\in(k_n;k_{n+1})$, $n\in\N^{\ast}$, the threshold wavenumbers $k_n$ being defined in (\ref{defThreshold}). For $\nu,\,\mu\in\{+,-\}$, $j,l\in\{+,-\}$ and $m$, $p\in\{1,\dots,P\}$, for all $\tilde{u}$, $\tilde{v}\in\mathring{\mathcal{W}}^2_{-\beta}(\Om)$, we have  
\[
\begin{array}{ll}
q_{\Om}(\chi^{\nu}w^{j}_m+\tilde{u},\chi^{\mu}w^{l}_p+\tilde{v})=-ij\,\nu\,\delta_{\nu,\,\mu}\,\delta_{j,\,l}\,\delta_{m,\,p}.
\end{array}
\]
\end{proposition}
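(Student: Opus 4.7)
By multilinearity, I would first split
\[
q_\Om(\chi^\nu w^j_m + \tilde u,\chi^\mu w^l_p+\tilde v) = q_\Om(\chi^\nu w^j_m,\chi^\mu w^l_p) + q_\Om(\chi^\nu w^j_m,\tilde v) + q_\Om(\tilde u,\chi^\mu w^l_p) + q_\Om(\tilde u,\tilde v),
\]
and handle each term separately. For any piece involving one factor in $\mathring{\mathcal{W}}^2_{-\beta}(\Om)$, I would apply Green's formula on the truncated strip $\Om_R := (-R,R)\times I$ and pass to the limit $R\to+\infty$. The boundary integrals along $y=0,1$ vanish because both arguments satisfy the clamped conditions (this is true for the propagating modes since $\varphi_m\in \mH^2_0(I)$, and for elements of the weighted spaces by density of $\mathscr{C}^\infty_0(\Om)$). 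The boundary integrals at $x=\pm R$ decay like $O(e^{-\beta R})$ thanks to the exponential decay of $\tilde u$ or $\tilde v$ in $\mathring{\mathcal{W}}^2_{-\beta}(\Om)$. Consequently the three mixed terms vanish.

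There remains to compute $q_\Om(\chi^\nu w^j_m,\chi^\mu w^l_p)$. If $\nu\ne\mu$ the supports of $\chi^+w^j_m$ and $\chi^-w^l_p$ are disjoint, so both duality pairings in the definition of $q_\Om$ are zero. If $\nu=\mu$, only the boundary integral at $x=\nu R$ survives for $R$ large (the other end is where both functions are identically zero). A direct substitution of $w^j_m(x,y)=e^{ij\eta_m x}\varphi_m(y)$ and $w^l_p(x,y)=e^{il\eta_p x}\varphi_p(y)$ (where I take $\varphi_m,\varphi_p$ real, which is legitimate since $\mathscr{L}(i\eta)$ has real coefficients and, by Proposition \ref{PropositionSpectrumSymbol}, geometric multiplicity equal to one) yields, after collecting the four trace contributions,
\[
q_\Om(\chi^\nu w^j_m,\chi^\nu w^l_p) = i\nu(j\eta_m+l\eta_p)\,e^{i\nu(j\eta_m-l\eta_p)R}\int_I\bigl[(\varphi_m''-\eta_m^2\varphi_m)\varphi_p+(\varphi_p''-\eta_p^2\varphi_p)\varphi_m\bigr]\,dy.
\]

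Finally I would analyze this expression case by case. When $j=-l$ and $m=p$ the prefactor $j\eta_m+l\eta_p=(j+l)\eta_m$ vanishes. When $m\ne p$, we have $\eta_m\ne\eta_p$, and subtracting the two identities $\langle\mathscr{L}(i\eta_m)\varphi_m,\varphi_p\rangle_I=0$ and $\langle\mathscr{L}(i\eta_p)\varphi_p,\varphi_m\rangle_I=0$ (after expanding with the symmetric bilinear form (\ref{defSymbol})) yields exactly the biorthogonality $\int_I[(\varphi_m''-\eta_m^2\varphi_m)\varphi_p+(\varphi_p''-\eta_p^2\varphi_p)\varphi_m]\,dy=0$, killing all off-diagonal contributions. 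In the diagonal case $j=l$, $m=p$, the exponential reduces to $1$, one integration by parts turns the integral into $-2\int_I(|d_y\varphi_m|^2+\eta_m^2|\varphi_m|^2)\,dy$, and the normalization (\ref{relationNormalization}) then delivers $q_\Om(\chi^\nu w^j_m,\chi^\nu w^j_m)=-ij\nu$, as claimed.

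The main technical obstacle is the careful justification of Green's formula in the weighted functional setting: one must check that the duality pairings $\langle (\Delta^2-k^4)(\chi^\nu w^j_m),\overline{\chi^\mu w^l_p}\rangle_\Om$ and their analogues are meaningful (which follows from the fact that $(\Delta^2-k^4)(\chi^\nu w^j_m)$ is compactly supported) and that the limits as $R\to\infty$ commute with these pairings, a point relying on density of $\mathscr{C}^\infty_0(\Om)$ in the weighted spaces. Apart from this bookkeeping (and the sign conventions for the outward normals at $x=\pm R$), the proof reduces to the two computational ingredients above: the biorthogonality derived from the variational equation satisfied by the modes, and the chosen normalization of the $\varphi_p$'s.
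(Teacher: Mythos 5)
Your proof is correct and, in its skeleton, matches the paper's: reduce to $q_{\Om}(\chi^{\nu}w^{j}_m,\chi^{\mu}w^{l}_p)$ by integration by parts (the decaying remainders contribute nothing), pick up the factor $\delta_{\nu,\mu}$ from support considerations, express the result as a boundary integral on a transverse section $x=\nu H$, and use the normalization (\ref{relationNormalization}) for the diagonal term — your explicit prefactor $i\nu(j\eta_m+l\eta_p)e^{i\nu(j\eta_m-l\eta_p)R}$ and the value $-ij\nu$ agree with the paper's (\ref{expressionA})--(\ref{calculFlux}). The one genuinely different sub-step is how the off-diagonal terms are killed: the paper notes that $q_{\Om}(\chi^{\nu}w^{j}_m,\chi^{\mu}w^{l}_p)=\delta_{\nu,\mu}e^{i\nu(j\eta_m-l\eta_p)H}\mathfrak{J}$ with both $q_{\Om}$ and $\mathfrak{J}$ independent of $H$, so the oscillating phase forces the product to vanish whenever $j\eta_m\ne l\eta_p$; you instead prove directly that the transverse factor vanishes, via the vanishing prefactor when $m=p$, $j=-l$, and via a biorthogonality identity obtained by subtracting $\langle\mathscr{L}(i\eta_m)\varphi_m,\varphi_p\rangle_I=0$ and $\langle\mathscr{L}(i\eta_p)\varphi_p,\varphi_m\rangle_I=0$ when $m\ne p$ (your reduction to real $\varphi_p$ is legitimate, and in fact unnecessary if one works with $\overline{\varphi_p}\in\ker\mathscr{L}(i\eta_p)$). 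Both routes are sound; the paper's phase argument is shorter and avoids any computation on the $\varphi_p$, while yours gives the transverse orthogonality relation explicitly, which is mildly more informative. Two small points of rigor on your side: the $O(e^{-\beta R})$ decay of the boundary terms coming from $\tilde u,\tilde v\in\mathring{\mathcal{W}}^2_{-\beta}(\Om)$ is an $\mL^2$-in-$x$ statement, so one should either pass to a suitable sequence $R_n\to\infty$ or argue by density/truncation as the paper implicitly does; and recall $\eta_m>0$ (no threshold), which you use tacitly when discarding the case $j\ne l$, $m=p$.
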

\begin{proof}
First, integrating by parts, we find that $q_{\Om}(\chi^{\nu}w^{j}_m+\tilde{u},\chi^{\mu}w^{l}_p+\tilde{v})=q_{\Om}(\chi^{\nu}w^{j}_m,\chi^{\mu}w^{l}_p)$ for all $\tilde{u}$, $\tilde{v}\in\mathring{\mathcal{W}}^2_{-\beta}(\Om)$. On the other hand, observing that $(\Delta\Delta-k^4)w_m^j=0$ for all $j\in\{+,-\}$, $m\in\{1,\dots,P\}$, and that $\chi^{\pm}=1$ for $\pm x\ge2L$, we can write 
\[
q_{\Om}(\chi^{\nu}w^{j}_m,\chi^{\mu}w^{l}_p)=\int_{\Om_{H}}\Delta\Delta(\chi^{\nu}w^{j}_m)\,\overline{\chi^{\mu}w^{l}_p}-\chi^{\nu}w^{j}_m\,\Delta\Delta(\overline{\chi^{\mu}w^{l}_p}) \,dxdy,\qquad\forall H\ge2L.
\]
Here we use the notation $\Om_H:=\{(x,y)\in\Om\,|\,|x|\le H\}$. Integrating by parts, we get
\[
\begin{array}{l}
q_{\Om}(\chi^{\nu}w^{j}_m,\chi^{\mu}w^{l}_p)=\dsp\delta_{\nu,\,\mu}\int_{\Sigma_{H}}\partial_n\Delta w^{j}_m\,\overline{w^{l}_p}-w^{j}_m\,\partial_n\Delta\overline{w^{l}_p} \,dy\\[10pt]
\hspace{2.7cm}-\dsp\delta_{\nu,\,\mu}\int_{\Sigma_{H}}\Delta w^{j}_m\,\partial_n\overline{w^{l}_p}-\partial_nw^{j}_m\,\Delta\overline{w^{l}_p} \,dy,\qquad\forall H\ge 2L,
\end{array}
\]
with $\Sigma_{H}:=\{-H\}\times(0;1)\cup\{H\}\times(0;1)$ and $\partial_n=\pm\partial_x$ at $x=\pm H$. We deduce
\begin{equation}\label{expressionA}
\begin{array}{l}
q_{\Om}(\chi^{\nu}w^{j}_m,\chi^{\mu}w^{l}_p)=\dsp\delta_{\nu,\,\mu}\,e^{i\nu(j\eta_m-l\eta_p)H}\,\mathfrak{J},\qquad\forall H\ge 2L,
\end{array}
\end{equation}
where the quantity $\mathfrak{J}$ is independent of $H\ge 2L$. Since $q_{\Om}(\chi^{\nu}w^{j}_m,\chi^{\mu}w^{l}_p)$ is also independent of $H\ge 2L$, we must have $q_{\Om}(\chi^{\nu}w^{j}_m,\chi^{\mu}w^{l}_p)=0$ if $j\eta_m-l\eta_p\ne0\Leftrightarrow[j\ne l\mbox{ or }m\ne p]$. To conclude the proof, it remains to study the case $\nu=\mu$, $j=l$ and $m=p$. Writing more precisely the quantity $\mathfrak{J}$ in (\ref{expressionA}), we find
\begin{equation}\label{calculFlux}
q_{\Om}(\chi^{\nu}w^{j}_m,\chi^{\nu}w^{j}_m)=-4ij\,\nu\,\eta_m\int_{I}|d_y\varphi_m(y)|^2+\eta_m^2|\varphi_m(y)|^2\,dy=-ij\,\nu.
\end{equation}
To obtain the second equality in (\ref{calculFlux}), we used (\ref{relationNormalization}). 
\end{proof}
\noindent \textit{Proof of Theorem \ref{thmIsom}.} From (\ref{EstimateContinuous}), we see that the operator 
$\mathscr{A}^{\mrm{out}}$ defined in (\ref{defOpFredholm}) is continuous. On the other hand, if 
\[
u=\chi^+\dsp\sum_{p=1}^{P} a_p\, w^{+}_{p} + \chi^-\sum_{p=1}^{P} b_p\, w^{-}_{p} + \tilde{u}
\]
belongs to $\ker\,\mathscr{A}^{\mrm{out}}$, then $q_{\Om}(u,u)=0$. From Proposition \ref{PropositionFluxNRJ}, this implies 
\[
i\sum_{p=1}^{P} |a_p|^2+i\sum_{p=1}^{P} |b_p|^2=0.
\]
We deduce that $u=\tilde{u}\in\mathring{\mathcal{W}}^2_{-\beta}(\Om)$ and so $u$ is in $\ker\,A_{-\beta}$ and in $\ker\,A_{\beta}$ which are both reduced to $\{0\}$ (Theorem  \ref{MainThm} together with the fact that $\mathscr{L}$ has no eigenvalue on the lines $\Re e\,\lambda=\pm\beta$). Therefore, $\mathscr{A}^{\mrm{out}}$ is injective. To conclude the proof, it remains to show that $\mathscr{A}^{\mrm{out}}$ is onto. Consider $f\in\mathring{\mathcal{W}}^{2}_{\beta}(\Om)^{\ast}\subset\mathring{\mW}^{2}_{-\beta}(\Om)^{\ast}$. Since $A_{\beta}$ is onto (Theorem \ref{MainThm}), there is some $v\in\mathring{\mW}^{2}_{\beta}(\Om)\subset\mathring{\mathcal{W}}^{2}_{\beta}(\Om)$ such that $A_{\beta}v=f$. According to Proposition \ref{propoDecomposition}, $v$ admits the following decomposition 
\[
v = \chi^+\sum_{p=1}^{P} (a^+_p w^{+}_{p}+a^{-}_p w^{-}_{p}) + \chi^-\sum_{p=1}^{P} (b^{-}_p w^{-}_{p}+b^{+}_p w^{+}_{p}) + \tilde{u}, 
\]
with coefficients $a^{\pm}_p$, $b^{\pm}_p\in\Cplx$ and $\tilde{u}\in \mathring{\mathcal{W}}^2_{-\beta}(\Om)$. Set 
\[
u:=v-\sum_{p=1}^{P} a^-_p w^-_{p}-\sum_{p=1}^{P} b^+_p w^+_{p}.
\] 
One can see that $u$ belongs to the space $\mathcal{W}^{\mrm{out}}(\Om)$. On the other hand, observing that the $w^{\pm}_p$ satisfy $(\Delta^2-k^4)w^{\pm}_p=0$, we obtain $\mathscr{A}^{\mrm{out}}u=f$. This shows that $\mathscr{A}^{\mrm{out}}$ is onto.\hfill$\square$

\subsection{Problems in the perturbed strip with radiation conditions}
We previously saw that for the simply supported strip, the unperturbed and perturbed cases where handled quite similarly.
In the case of the clamped strip, the perturbed case is significantly more difficult than the unperturbed one, in the sense that additional arguments have to be introduced.  
Let us come back to the original Problem (\ref{u_model}) with a hole $\mathscr{O}$ in the clamped case: 
\begin{equation}\label{PbObstacle}
\left\{
\begin{array}{rcll}
\Delta^2 u-k^4u & = & f & \mbox{ in }D\\[3pt]
u= \partial_n u  & = & 0  & \mbox{ on }\partial\Om\\[3pt]
Mu= Nu  & = & 0  & \mbox{ on }\partial\mathscr{O},
\end{array}
\right.
\end{equation}
where $f$ will be specified later on. To set ideas, we assume in this paragraph that $L$ is chosen so that $\overline{\mathscr{O}}\subset(-L;L)\times(0;1)$. 
Problem (\ref{PbObstacle}) leads to consider the variational equality
\[
\mathfrak{b}(u,v)=\mathfrak{m}(v),\qquad\forall v\in\{\phi|_{D}\,|\,\phi\in\mathscr{C}^{\infty}_0(\Om)\}, 
\]
\[
\begin{array}{lccl}
\mbox{ with } & \mathfrak{b}(u,v)&=&\dsp\int_{\Om} \nu\Delta u\,\overline{\Delta v}+(1-\nu)\left(\frac{\partial^2u}{\partial x^2}\,\frac{\partial^2\overline{v}}{\partial x^2}+2\frac{\partial^2u}{\partial x\partial y}\,\frac{\partial^2\overline{v}}{\partial x\partial y}+\frac{\partial^2u}{\partial y^2}\,\frac{\partial^2\overline{v}}{\partial y^2}\right)-k^4 u\,\overline{v}\,dxdy\\[12pt]
&\mathfrak{m}(v)&=&\langle f,\overline{v}\rangle_{\Om}.
\end{array}
\]
 Observe that the functions of $\{\phi|_{D}\,|\,\phi\in\mathscr{C}^{\infty}_0(\Om)\}$ do not  necessarily vanish on $\partial\mathscr{O}$. Now, we introduce notation similar to the one of the two previous paragraphs in the geometry $D$ instead of $\Om$. For $\beta\in\R$, define the weighted Sobolev space $\mathring{\mathcal{W}}^2_{\beta}(D)$ as the completion of $\{\phi|_{D}\,|\,\phi\in\mathscr{C}^{\infty}_0(\Om)\}$ for the norm
\[
\|v\|_{\mathcal{W}^2_{\beta}(D)}=\Big
(\sum_{\alpha,\,\gamma\in\N,\ \alpha+\gamma\le 2}\|e^{-\beta |x|}\partial^{\alpha}_x\partial^{\gamma}_y v\|^2_{\mrm{L}^2(D)}\Big)^{1/2}.
\]
Again, remark the absolute value in the weight $e^{-\beta |x|}$. We denote $\mathring{\mathcal{W}}^2_{\beta}(D)^{\ast}$ the topological dual space of $\mathring{\mathcal{W}}^2_{\beta}(D)$ endowed with the norm (\ref{DefNormAdjoint}), $\Om$ being replaced by $D$. We define the linear and bounded operator $\mathscr{B}_{\beta}:\mathring{\mathcal{W}}^2_{\beta}(D)\to \mathring{\mathcal{W}}^{2}_{-\beta}(D)^{\ast}$ such that 
\begin{equation}\label{DefOperateursObstacle}
\langle \mathscr{B}_{\beta} u,\overline{v}\rangle_{D}=\mathfrak{b}(u,v),\qquad \forall (u,v)\in\mathring{\mathcal{W}}^2_{\beta}(D)\times \mathring{\mathcal{W}}^2_{-\beta}(D).
\end{equation}
One can easily prove that $\mathscr{B}_{\beta}^\ast=\mathscr{B}_{-\beta}$.
\\\\
As in the previous section, for $n\in\N^{\ast}$, pick $k\in(k_n;k_{n+1})$. In what follows, the weight $\beta>0$ is chosen small once for all enough such that $\{\lambda\in\Lambda\,|\,-\beta \le \Re e\,\lambda\le \beta\}=\Lambda\cap\R i\setminus\{0\}$. Using again the notation introduced in (\ref{DefExposants}), (\ref{DefModesPropa}) for the $w^{\pm}_p$, we define the space $\mathcal{W}^{\mrm{out}}(D)$ that consists of functions $v\in \mathring{\mathcal{W}}^2_{\beta}(D)$ that admit the representation 
\[
v = \chi^+\sum_{p=1}^{P} a_p\, w^{+}_{p} + \chi^-\sum_{p=1}^{P} b_p\,w^{-}_{p} + \tilde{v}, 
\]
with coefficients $a_p$, $b_p\in\Cplx$ and $\tilde{v}\in \mathring{\mathcal{W}}^2_{-\beta}(D)$. We remind the reader that $\chi^{\pm}\in\mathscr{C}^{\infty}(\R^2)$ is a cut-off function equal to one for $\pm x\ge 2L$ and to zero for $\pm x\le L$. The space $\mathcal{W}^{\mrm{out}}(D)$ is a Hilbert space for the inner product naturally associated with the norm 
\[
\|v\|_{\mathcal{W}^{\mrm{out}}(D)} = \Big( \sum_{p=1}^{P}|a_p|^2+\sum_{p=1}^{P}|b_p|^2+\|\tilde{v}\|^2_{\mathcal{W}^2_{-\beta}(D)} \Big)^{1/2}.
\]
Working as we did in (\ref{defOpFredholm}) for $\mathscr{A}^{\mrm{out}}$, we define the linear operator $\mathscr{B}^{\mrm{out}}$ such that
\begin{equation}\label{defOpFredholmObstacle}
\begin{array}{lccc}
\mathscr{B}^{\mrm{out}}: & \hspace{-0.2cm}\mathcal{W}^{\mrm{out}}(D) & \hspace{-0.1cm}\longrightarrow & \hspace{-0.1cm}\mathring{\mathcal{W}}^{2}_{\beta}(D)^{\ast}\\
 & \hspace{-0.2cm}u = \chi^+\dsp\sum_{p=1}^{P} a_p\, w^{+}_{p} + \chi^-\sum_{p=1}^{P} b_p\, w^{-}_{p} + \tilde{u} & \hspace{-0.1cm}\longmapsto & \hspace{-0.1cm}\mathscr{B}^{\mrm{out}}u
\end{array} \hspace{-0.2cm}
\end{equation}
where $\mathscr{B}^{\mrm{out}}u$ is defined as the functional such that for all $v\in\mathring{\mathcal{W}}^{2}_{\beta}(D)$
\[
\langle \mathscr{B}^{\mrm{out}}u,\overline{v}\rangle_{D} = \dsp\sum_{p=1}^{P} a_p\int_{D} (\Delta\Delta-k^4\mrm{Id})(\chi^+w^{+}_{p})\overline{v}\,dxdy+\dsp\sum_{p=1}^{P} b_p\int_{D} (\Delta\Delta-k^4\mrm{Id})(\chi^-w^{-}_{p})\overline{v}\,dxdy+\mathfrak{b}(\tilde{u},v).
\]
\noindent As in the previous section, in order to prove our main theorem for  $\mathscr{B}^{\mrm{out}}$, we need to establish intermediate results. Let us define $\mathcal{W}^{\dagger}(D)$ the space of functions $v$ of $\mathring{\mathcal{W}}^2_{\beta}(D)$ that admit the representation 
\[
v = \chi^+\sum_{p=1}^{P} (a^+_p w^{+}_{p}+a^{-}_p w^{-}_{p}) + \chi^-\sum_{p=1}^{P} (b^{-}_p w^{-}_{p}+b^{+}_p w^{+}_{p}) + \tilde{v}, 
\]
with coefficients $a^{\pm}_p$, $b^{\pm}_p\in\Cplx$ and $\tilde{v}\in \mathring{\mathcal{W}}^2_{-\beta}(D)$. Let us introduce also the symplectic form $q_{D}(\cdot,\cdot)$ such that for all $u$, $v\in\mathcal{W}^{\dagger}(D)$, 
\begin{equation}\label{DefFormSympl}
q_{D}(u,v)=\langle \mathscr{B}_{\beta}u,\overline{v}\rangle_{D}-\langle \mathscr{B}_{\beta}\overline{v},u\rangle_{D}.
\end{equation}
Here $\mathscr{B}_{\beta}u$ and $\mathscr{B}_{\beta}\overline{v}$ must be regarded as elements of $\mathring{\mathcal{W}}^2_{\beta}(D)^{\ast}$ defined using the extension by continuity process presented in (\ref{EstimateContinuous}). Working exactly as in the proof of Proposition \ref{PropositionFluxNRJ}, one can establish the following result.
\begin{proposition}\label{PropositionFluxNRJObstacle}
Assume that $k\in(k_n;k_{n+1})$, $n\in\N^{\ast}$, the threshold wavenumbers $k_n$ being defined in (\ref{defThreshold}). For $\nu,\,\mu\in\{+,-\}$, $j,l\in\{+,-\}$ and $m$, $p\in\{1,\dots,P\}$, for all $\tilde{u}$, $\tilde{v}\in\mathring{\mathcal{W}}^2_{-\beta}(D)$, we have  
\[
\begin{array}{ll}
q_{D}(\chi^{\nu}w^{j}_m+\tilde{u},\chi^{\mu}w^{l}_p+\tilde{v})=-ij\,\nu\,\delta_{\nu,\,\mu}\,\delta_{j,\,l}\,\delta_{m,\,p}.
\end{array}
\]
\end{proposition}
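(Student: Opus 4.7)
The strategy is to mimic the proof of Proposition~\ref{PropositionFluxNRJ} verbatim, with only minor verifications to account for the presence of the obstacle $\mathscr{O}$ and the free boundary condition on $\partial\mathscr{O}$. The crucial observation making the adaptation trivial is that the cut-off functions $\chi^{\pm}$ vanish identically in a neighbourhood of $\overline{\mathscr{O}}$ (since $\overline{\mathscr{O}}\subset(-L;L)\times(0;1)$ and $\chi^{\pm}$ is supported in $\{\pm x\ge L\}$). Consequently each propagating-mode representative $\chi^{\nu}w_m^{j}$ is smooth on $\overline{D}$, vanishes together with all its derivatives in a neighbourhood of $\partial\mathscr{O}$, and trivially satisfies $M(\chi^{\nu}w_m^{j})=N(\chi^{\nu}w_m^{j})=0$ on $\partial\mathscr{O}$.

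First, I would reduce to the pure-mode case $q_{D}(\chi^{\nu}w_m^{j},\chi^{\mu}w_p^{l})$. Using the extension by continuity of $\mathscr{B}_{\beta}u$ to $\mathcal{W}^{\dagger}(D)$ (as in~(\ref{EstimateContinuous})), integration by parts in the form $\mathfrak{b}$ shows that $q_{D}(\chi^{\nu}w_m^{j}+\tilde{u},\chi^{\mu}w_p^{l}+\tilde{v})=q_{D}(\chi^{\nu}w_m^{j},\chi^{\mu}w_p^{l})$ for all $\tilde{u},\tilde{v}\in\mathring{\mathcal{W}}^{2}_{-\beta}(D)$: the boundary contributions on $\partial\Om$ disappear because functions of $\mathring{\mathcal{W}}^{2}_{-\beta}(D)$ satisfy the clamped conditions there, those on $\partial\mathscr{O}$ disappear because the free conditions $Mu=Nu=0$ naturally cancel with the test function traces in $\mathfrak{b}(u,v)-\overline{\mathfrak{b}(v,u)}$ (the obstacle contribution is already baked into the variational definition of $\mathfrak{b}$), and the contributions at $x=\pm\infty$ vanish thanks to the exponential decay of $\tilde{u},\tilde{v}$ combined with the compact support of the mode cut-offs on the opposite end.

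Next, the factor $\delta_{\nu,\mu}$ arises immediately because $\chi^{+}\chi^{-}\equiv 0$, so the pure-mode pairing vanishes whenever $\nu\ne\mu$. For $\nu=\mu$, I would pick any $H\ge 2L$ and write $q_{D}(\chi^{\nu}w_m^{j},\chi^{\nu}w_p^{l})$ as an integral on the truncated domain $D\cap\{|x|<H\}$. Since $\mathscr{O}\Subset\{|x|<L\}<\{|x|<H\}$, this integral coincides with the corresponding integral on $\Om\cap\{|x|<H\}$ (the integrand vanishes in a neighbourhood of $\mathscr{O}$), so the computation is literally the one leading to~(\ref{expressionA}): after integration by parts the volume terms cancel via $(\Delta^2-k^4)w_m^{j}=0$, leaving the boundary integral on $\Sigma_{H}$. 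The $H$-dependence of the resulting expression is a pure exponential factor $e^{i\nu(j\eta_m-l\eta_p)H}$ multiplying an $H$-independent quantity; since $q_{D}$ itself does not depend on $H$, this forces $j=l$ and $m=p$. The diagonal value is then exactly~(\ref{calculFlux}), which the normalization~(\ref{relationNormalization}) reduces to $-ij\nu$.

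The only point requiring some care (and arguably the main obstacle) is the first reduction step: one must justify that the symplectic form $q_{D}$, initially defined on $\mathcal{W}^{\dagger}(D)$ through the continuous extension of $\mathscr{B}_{\beta}$, really produces no spurious contribution from the obstacle boundary when one integrates by parts. This is handled as in the unperturbed case by approximating $\tilde{u}$, $\tilde{v}$ by elements of $\{\phi|_{D}:\phi\in\mathscr{C}^{\infty}_{0}(\Om)\}$ (the density class used to define $\mathring{\mathcal{W}}^{2}_{-\beta}(D)$) and passing to the limit, using that on $\partial\mathscr{O}$ the free boundary terms $M\cdot N$ are exactly those absorbed into the definition of $\mathfrak{b}$, so no extra boundary contribution survives.
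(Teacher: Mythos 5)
Your proposal is correct and matches the paper's own treatment: the paper proves this proposition simply by invoking the argument of Proposition \ref{PropositionFluxNRJ} verbatim, and your verification — that $\chi^{\pm}$ vanish on a neighbourhood of $\overline{\mathscr{O}}$ so the truncated-domain integral, the integration by parts on $\Sigma_H$, the $H$-independence argument and the normalization (\ref{relationNormalization}) go through unchanged, while the $\tilde{u},\tilde{v}$ contributions drop out by symmetry of $\mathfrak{b}$ and exponential decay — is exactly the intended adaptation.
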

\noindent 
The following theorem is the equivalent of Theorems \ref{ThmScaSimply1} and \ref{ThmScaSimply2} in the simply supported case. In other words, it solves the corresponding scattering problems. Moreover, such theorem is used in the proof of the main result of this section, that is Theorem \ref{thmIsomObstacle}. We postpone the proof of Theorem \ref{TheoremDecompoKernel} to the end of this section. 
\begin{theorem}\label{TheoremDecompoKernel}
Assume that $k\in(k_n;k_{n+1})$, $n\in\N^{\ast}$, the threshold wavenumbers $k_n$ being defined in (\ref{defThreshold}).\\
1) The operators $\mathscr{B}_{\pm\beta}$ are of Fredholm type.\\
2) Moreover, we have $\dim\,\ker\,\mathscr{B}_{\beta}-\dim\,\ker\,\mathscr{B}_{-\beta}=2P$ and there are functions $\Psi_1,\dots,\Psi_{2P}\in\ker\,\mathscr{B}_{\beta}$ admitting the decomposition, for $p=1,\dots,P$,  
\begin{equation}\label{DefElemNoyau}
\begin{array}{lcl}
\Psi_p&\hspace{-0.2cm}=&\hspace{-0.2cm}\chi^+\,w^-_p+\chi^+\dsp\sum_{m=1}^{P} s_{p\,m}\,w^{+}_{m} + \chi^-\sum_{m=1}^{P} s_{p\,P+m}\,w^{-}_{m} + \tilde{\Psi}_p,\\[15pt] 
\Psi_{P+p}&\hspace{-0.2cm}=&\hspace{-0.2cm}\chi^-\,w^+_p+\chi^+\dsp\sum_{m=1}^{P} s_{P+p\,m}\,w^{+}_{m} + \chi^-\sum_{m=1}^{P} s_{P+p\,P+m}\,w^{-}_{m} + \tilde{\Psi}_{P+p}. 
\end{array}
\end{equation}
Here, the $\tilde{\Psi}_p$, $p=1,\dots,2P$, belong to $\mathring{\mathcal{W}}^2_{-\beta}(D)$ and the scattering matrix $\mathbb{S}:=(s_{p\,m})_{1\le p,m\le 2P}\in\Cplx^{2P\times2P}$ is uniquely defined, unitary ($\mathbb{S}\,\overline{\mathbb{S}}^{\top}=\mrm{Id}^{2P\times2P}$) and symmetric ($\mathbb{S}^{\top}=\mathbb{S}$). 
\end{theorem}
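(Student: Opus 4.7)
The plan is to establish successively: Fredholmness of $\mathscr{B}_{\pm\beta}$, the dimensional identity $\dim\ker\mathscr{B}_\beta-\dim\ker\mathscr{B}_{-\beta}=2P$, the existence of the scattering solutions $\Psi_p$, and the algebraic properties of $\mathbb{S}$. Fredholmness follows from Kondratiev's framework for elliptic problems in domains with cylindrical outlets: since by choice of $\beta$ the lines $\Re e\,\lambda=\pm\beta$ avoid the spectrum of $\mathscr{L}$, one constructs a parametrix for $\mathscr{B}_{\pm\beta}$ by patching a compactly supported interior parametrix near the hole $\mathscr{O}$ with the inverses of the reference operators at each end (these being the symmetric-weight analogues of Theorem \ref{MainThm}, obtained by a one-sided cut-off and the Fourier-Laplace analysis of Section \ref{paragraphSymbol}), the remainder being compact on the bounded overlap region by Rellich-Kondrachov.

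For the dimensional identity, I would first compute the index of the unperturbed operator $\mathscr{A}_\beta$ (the symmetric-weight analogue on $\Om$): by the decomposition of Proposition \ref{propoDecomposition}, the elements of $\ker\mathscr{A}_\beta$ are exactly linear combinations of the $2P$ propagating modes $w^\pm_p$, so $\dim\ker\mathscr{A}_\beta=2P$; conversely $\dim\ker\mathscr{A}_{-\beta}=0$ (no exponentially decaying homogeneous solution exists in the unperturbed strip), hence $\ind\mathscr{A}_\beta=2P$. Since the perturbation induced by the hole is localized and the Kondratiev index depends only on the symbol at infinity, $\ind\mathscr{B}_\beta=\ind\mathscr{A}_\beta=2P$. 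Combined with $\ind\mathscr{B}_{-\beta}=-\ind\mathscr{B}_\beta$ (from $\mathscr{B}_\beta^\ast=\mathscr{B}_{-\beta}$) and $\dim\coker\mathscr{B}_\beta=\dim\ker\mathscr{B}_{-\beta}$, this yields the announced dimensional identity.

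For the scattering solutions, I introduce the linear map $\pi:\ker\mathscr{B}_\beta\to\Cplx^{2P}$ sending an element $u$ to its $2P$ \emph{incoming} modal coefficients (the $a^-_m$ at $+\infty$ and the $b^+_m$ at $-\infty$) in the decomposition given by the analogue of Proposition \ref{propoDecomposition} for $D$. The kernel of $\pi$ is precisely $\ker\mathscr{B}_{-\beta}$ (trapped modes, being in $\mathring{\mathcal{W}}^2_{-\beta}(D)$, have no propagating components), so $\pi$ induces an injective map $\ker\mathscr{B}_\beta/\ker\mathscr{B}_{-\beta}\hookrightarrow\Cplx^{2P}$; by the dimensional identity just established, both sides have dimension $2P$, so the induced map is an isomorphism. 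Picking the standard basis of $\Cplx^{2P}$ produces $\Psi_1,\dots,\Psi_{2P}\in\ker\mathscr{B}_\beta$ (unique modulo trapped modes) whose incoming-coefficient patterns are $\delta_{pq}$ --~precisely the form (\ref{DefElemNoyau})~-- and the entries $s_{pm}$ are read off as the outgoing coefficients.

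Finally, for the properties of $\mathbb{S}$: uniqueness is immediate, since any two admissible choices of $\Psi_p$ differ by a trapped mode, which contributes zero to every propagating coefficient. Unitarity follows from $q_D(\Psi_p,\Psi_q)=0$ (both arguments lying in $\ker\mathscr{B}_\beta$); expansion via the biorthogonality of Proposition \ref{PropositionFluxNRJObstacle} and reassembly in block form delivers $\overline{\mathbb{S}}^\top\mathbb{S}=\mrm{Id}^{2P\times 2P}$. Symmetry is obtained by the analogous argument with the bilinear counterpart $Q_D(u,v):=\langle\mathscr{B}_\beta u,v\rangle_D-\langle\mathscr{B}_\beta v,u\rangle_D$ of $q_D$ (licit since the underlying form $\mathfrak{b}$ is symmetric in the bilinear sense), after establishing the corresponding bilinear biorthogonality relations --~which couple $w^+_m$ with $w^-_m$ rather than a mode with its conjugate~-- and inserting them into $Q_D(\Psi_p,\Psi_q)=0$ to obtain $s_{pq}=s_{qp}$. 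The main obstacle is justifying $\ind\mathscr{B}_\beta=\ind\mathscr{A}_\beta$, which requires the invariance of the Kondratiev index under localized geometric and coefficient perturbations; the bilinear biorthogonality underpinning the symmetry of $\mathbb{S}$ is the other place where some care is needed.
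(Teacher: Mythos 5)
Your skeleton matches the paper's proof in most respects: Fredholmness of $\mathscr{B}_{\pm\beta}$ via a parametrix obtained by patching an interior inverse near $\mathscr{O}$ with the weighted inverses of Theorem \ref{MainThm} at the two outlets; the index value $2P$; construction of the $\Psi_p$ from a basis of $\ker\,\mathscr{B}_{\beta}$ modulo decaying functions; unitarity and symmetry read off from the symplectic form and Proposition \ref{PropositionFluxNRJObstacle}. The genuinely different step is the index computation. You compute $\ind\,\mathscr{A}_{\beta}=2P$ on the unperturbed strip and then transfer it to $D$ by ``invariance of the Kondratiev index under localized perturbations''; the paper instead applies the relative index theorem of \cite{NaPl94} directly on $D$, giving $\ind\,\mathscr{B}_{\beta}-\ind\,\mathscr{B}_{-\beta}=4P$, and combines it with $\mathscr{B}_{\beta}^{\ast}=\mathscr{B}_{-\beta}$, hence $\ind\,\mathscr{B}_{\beta}=-\ind\,\mathscr{B}_{-\beta}=2P$, without ever comparing operators on $\Om$ and on $D$. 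Your transfer statement is true, but it is not elementary: a proof of index stability under a compact modification of the domain is of the same depth as the relative-index result the paper cites, so as written it remains a black box (you flag this yourself). Also, the claim $\dim\,\ker\,\mathscr{A}_{\beta}=2P$ ``by Proposition \ref{propoDecomposition}'' is quicker than it should be: the decomposition alone only bounds the dimension of $\ker\,\mathscr{A}_{\beta}/\ker\,\mathscr{A}_{-\beta}$ by $4P$. To see that the kernel is exactly $\mrm{span}\{w^{\pm}_p\}$, subtract from $u$ the global modes matching its coefficients at $+\infty$ and note that the remainder belongs to a one-sided space $\mathring{\mW}^2_{\beta'}(\Om)$ on which $A_{\beta'}$ is injective by Theorem \ref{MainThm}.

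The other thin point is the identity $\ker\,\pi=\ker\,\mathscr{B}_{-\beta}$: your parenthetical justifies only the easy inclusion (trapped modes have no propagating components), whereas the inclusion you actually need for the injectivity of the induced map on the quotient is the converse, namely that an element of $\ker\,\mathscr{B}_{\beta}$ with vanishing incoming coefficients is exponentially decaying. This requires the flux argument $q_D(u,u)=0$ combined with Proposition \ref{PropositionFluxNRJObstacle}; it is exactly the computation the paper uses to show that the matrix of incoming coefficients is invertible (and the one proving $\ker\,\mathscr{B}^{\mrm{out}}=\ker\,\mathscr{B}_{-\beta}$ in Theorem \ref{thmIsomObstacle}). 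Once this is supplied, your construction of the $\Psi_p$, the uniqueness of $\mathbb{S}$, the unitarity from $q_D(\Psi_p,\Psi_q)=0$, and the symmetry via the bilinear form $Q_D(u,v)=\langle\mathscr{B}_{\beta}u,v\rangle_D-\langle\mathscr{B}_{\beta}v,u\rangle_D$ (equivalently the paper's evaluation of $q_D(\Psi_m,\overline{\Psi_p})$, legitimate because the operator has real coefficients so $\overline{\Psi_p}\in\ker\,\mathscr{B}_{\beta}$) all go through and coincide with the paper's argument.
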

\noindent  Now we state the main result of the section, which is the equivalent of Theorem \ref{thmIsomObstacleSimply} in the simply supported case.
\begin{theorem}\label{thmIsomObstacle}
Assume that $k\in(k_n;k_{n+1})$, $n\in\N^{\ast}$, the threshold wavenumbers $k_n$ being defined in (\ref{defThreshold}).\\
1) The operator $\mathscr{B}^{\mrm{out}}$ defined in (\ref{defOpFredholmObstacle}) is Fredholm of index zero and $\ker\,\mathscr{B}^{\mrm{out}}=\ker\,\mathscr{B}_{-\beta}$. As a consequence,\\[3pt]
$a)$ If $\ker\,\mathscr{B}_{-\beta}=\{0\}$, then $\mathscr{B}^{\mrm{out}}$ is an isomorphism.\\[3pt]
$b)$ If $\ker\,\mathscr{B}_{-\beta}=\mrm{span}(z_1,\dots,z_d)$ for some $d\ge1$, then the equation $\mathscr{B}^{\mrm{out}}u=f\in\mathring{\mathcal{W}}^{2}_{\beta}(D)^{\ast}$ admits a solution (defined up to an element of $\ker\,\mathscr{B}_{-\beta}$) if and only if $f $ satisfies the compatibility conditions $\langle f,\overline{z_j}\rangle_{D}=0$ for $j=1,\dots,d$.\\
\newline
2) If $u\in\mathcal{W}^{\mrm{out}}(D)$ is such that $\mathscr{B}^{\mrm{out}}u=f\in\mathring{\mathcal{W}}^{2}_{\beta}(D)^{\ast}$, then we have
\[
u-\chi^{+}\sum_{p=1}^{P}c_p\,w^+_p-\chi^{-}\sum_{p=1}^{P}c_{p+P}\,w^-_p\in\mathring{\mathcal{W}}^{2}_{-\beta}(D)\qquad\mbox{with}\qquad c_p=i\langle f,\Psi_p \rangle_D, \ p=1,\dots,2P,
\]
the $\Psi_p\in\ker\,\mathscr{B}_{\beta}$ being defined in (\ref{DefElemNoyau}).
\end{theorem}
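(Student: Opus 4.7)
The plan is to realize $\mathscr{B}^{\mrm{out}}$ as a finite-rank extension of $\mathscr{B}_{-\beta}$ (whose Fredholm properties come from Theorem \ref{TheoremDecompoKernel}) and to extract the outgoing coefficients via the symplectic form $q_D$ of (\ref{DefFormSympl}). I would first observe that $\mathcal{W}^{\mrm{out}}(D)=\mathring{\mathcal{W}}^2_{-\beta}(D)\oplus V^{\mrm{out}}$ with $V^{\mrm{out}}=\mrm{span}\{\chi^+w^+_p,\chi^-w^-_p\}_{p=1}^{P}$ of dimension $2P$, and that $\mathscr{B}^{\mrm{out}}$ agrees with $\mathscr{B}_{-\beta}$ on $\mathring{\mathcal{W}}^2_{-\beta}(D)$ (for exponentially decaying $u$ the modal part of the outgoing decomposition vanishes, so the corrective terms $(\Delta^2-k^4)(\chi^\pm w^\pm_p)$ in (\ref{defOpFredholmObstacle}) do not contribute). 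Standard Fredholm perturbation theory then gives that $\mathscr{B}^{\mrm{out}}$ is Fredholm with $\ind\,\mathscr{B}^{\mrm{out}}=\ind\,\mathscr{B}_{-\beta}+2P$. Combining $\mathscr{B}_\beta=\mathscr{B}_{-\beta}^\ast$ (stated just after (\ref{DefOperateursObstacle})) with the identity $\dim\,\ker\,\mathscr{B}_\beta-\dim\,\ker\,\mathscr{B}_{-\beta}=2P$ of Theorem \ref{TheoremDecompoKernel} yields $\ind\,\mathscr{B}_{-\beta}=-2P$, hence $\ind\,\mathscr{B}^{\mrm{out}}=0$.

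\textbf{Kernel identification and consequences.} The inclusion $\ker\,\mathscr{B}_{-\beta}\subset\ker\,\mathscr{B}^{\mrm{out}}$ is immediate from the previous observation. For the converse, take $u=\chi^+\sum_p a_pw^+_p+\chi^-\sum_p b_pw^-_p+\tilde{u}\in\ker\,\mathscr{B}^{\mrm{out}}$. The extended operator $\mathscr{B}_\beta u$ coincides with $\mathscr{B}^{\mrm{out}}u$ in $\mathring{\mathcal{W}}^2_\beta(D)^\ast$, so it vanishes; since the coefficients of $\Delta^2-k^4$ are real, also $\mathscr{B}_\beta\overline{u}=\overline{\mathscr{B}_\beta u}=0$. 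Substituting into (\ref{DefFormSympl}) gives $q_D(u,u)=0$, while Proposition \ref{PropositionFluxNRJObstacle} (expanded sesquilinearly using the diagonality of mode-mode pairings) evaluates $q_D(u,u)=-i(\sum_p|a_p|^2+\sum_p|b_p|^2)$, forcing every $a_p,b_p$ to vanish and hence $u=\tilde{u}\in\ker\,\mathscr{B}_{-\beta}$. Parts (a) and (b) then follow from $\ind\,\mathscr{B}^{\mrm{out}}=0$ together with the Fredholm alternative: the necessary compatibility $\langle f,\overline{z_j}\rangle_D=\mathfrak{b}(u,z_j)=\overline{\mathfrak{b}(z_j,u)}=0$ holds automatically because $z_j\in\ker\,\mathscr{B}_{-\beta}$ and $u\in\mathring{\mathcal{W}}^2_\beta(D)$ is an admissible test function for $\mathscr{B}_{-\beta}z_j$, and a dimension count shows these $d$ conditions are also sufficient.

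\textbf{Formula for $c_p$.} To prove part 2, I would single out each $c_p$ by pairing $u$ with $\overline{\Psi_p}$ in $q_D$. Because $\Psi_p\in\ker\,\mathscr{B}_\beta$ and conjugation commutes with the real operator $\mathscr{B}_\beta$, we have $\mathscr{B}_\beta\overline{\Psi_p}=0$, so $q_D(u,\overline{\Psi_p})=\langle\mathscr{B}_\beta u,\Psi_p\rangle_D-0=\langle f,\Psi_p\rangle_D$. On the other hand, conjugating (\ref{DefElemNoyau}) and using $\overline{w^\pm_m}=w^\mp_m$ shows that for $p\le P$ the only outgoing mode appearing in $\overline{\Psi_p}$ is $\chi^+w^+_p$ with coefficient $1$ (the terms $\overline{s_{p\,m}}\chi^+w^-_m$ and $\overline{s_{p,P+m}}\chi^-w^+_m$ are of incoming type, hence do not overlap with any mode present in $u\in\mathcal{W}^{\mrm{out}}(D)$). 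Proposition \ref{PropositionFluxNRJObstacle} thus collapses the pairing to the single term $q_D(u,\overline{\Psi_p})=-ic_p$, yielding $c_p=i\langle f,\Psi_p\rangle_D$; the identical argument with $\overline{\Psi_{P+q}}$ (whose only outgoing overlap is $\chi^-w^-_q$ with coefficient $1$) gives $c_{q+P}=i\langle f,\Psi_{q+P}\rangle_D$. The principal delicate point throughout is tracking the conjugation conventions carefully so that Proposition \ref{PropositionFluxNRJObstacle} applies as a clean biorthogonality relation and the many cross terms in the expansion collapse to the stated single-term formula.
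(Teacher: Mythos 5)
Your proposal is correct, but for part 1 it follows a genuinely different route than the paper. You obtain Fredholmness and the index abstractly: writing $\mathcal{W}^{\mrm{out}}(D)=\mathring{\mathcal{W}}^2_{-\beta}(D)\oplus V^{\mrm{out}}$ with $\dim V^{\mrm{out}}=2P$, viewing $\mathscr{B}^{\mrm{out}}$ as a finite-rank perturbation of the extension by zero of $\mathscr{B}_{-\beta}$, and computing $\ind\,\mathscr{B}^{\mrm{out}}=\ind\,\mathscr{B}_{-\beta}+2P=0$ from $\mathscr{B}_{\beta}=\mathscr{B}_{-\beta}^{\ast}$ together with $\dim\,\ker\,\mathscr{B}_{\beta}-\dim\,\ker\,\mathscr{B}_{-\beta}=2P$ of Theorem \ref{TheoremDecompoKernel}; statement $b)$ then follows from the Fredholm alternative once the closed range (of codimension $d$) is identified, by a codimension count, with the common kernel of the $d$ independent functionals $f\mapsto\langle f,\overline{z_j}\rangle_D$. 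The paper instead proves solvability constructively: for $f$ annihilating $\ker\,\mathscr{B}_{-\beta}$ it solves $\mathscr{B}_{\beta}v=f$, decomposes $v$ via Proposition \ref{propoDecomposition}, and subtracts suitable multiples of the kernel functions $\Psi_p$ of (\ref{DefElemNoyau}) to produce an outgoing preimage; closedness of the range and $\dim\,\coker\,\mathscr{B}^{\mrm{out}}=\dim\,\ker\,\mathscr{B}_{-\beta}$ fall out of this construction, with no perturbation or index-additivity argument needed. Your version is shorter and more modular, using only the statements of Theorem \ref{TheoremDecompoKernel}; the paper's version buys, in passing, the explicit description of outgoing solutions as $\mathscr{B}_{\beta}$-solutions corrected by the scattering functions $\Psi_p$, which is in the same spirit as your part 2. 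Your kernel identification via $q_D(u,u)=0$ and Proposition \ref{PropositionFluxNRJObstacle}, and the extraction of $c_p$ by pairing with $\overline{\Psi_p}$, coincide with the paper's argument. Two small points to tidy: in part 2, the term of (\ref{DefFormSympl}) that must vanish when $v=\overline{\Psi_p}$ is $\langle \mathscr{B}_{\beta}\Psi_p,u\rangle_D$ (which it does, since $\Psi_p\in\ker\,\mathscr{B}_{\beta}$), rather than $\mathscr{B}_{\beta}\overline{\Psi_p}$ --- harmless, as both vanish; and the identity $\overline{w^{\pm}_m}=w^{\mp}_m$ used to conjugate (\ref{DefElemNoyau}) requires the transverse eigenfunctions $\varphi_m$ to be chosen real, which is legitimate because $\ker\,\mathscr{L}(i\eta_m)$ is one-dimensional and spanned by a real function (the paper uses this implicitly as well).
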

\begin{remark}
When $k\in(0;k_1)$, using the result of Remark \ref{isoBelowk1}, one can prove that Fredholmness of (\ref{PbObstacle}) holds in $\{u \in \mH^2(D)\,|\, u=\partial_nu=0\mbox{ \rm{on} }\partial\Om\}$. In particular, we do not need to impose radiation conditions. 
\end{remark}
\begin{remark}
From Theorem \ref{thmIsomObstacle}, we deduce that if Problem (\ref{PbObstacle}) for $f=0$ has only the zero solution in $\mathring{\mathcal{W}}^2_{-\beta}(D)$, then for any $f \in \mathring{\mathcal{W}}^{2}_{\beta}(D)^{\ast}$, Problem (\ref{PbObstacle}) has a unique solution $u$ in $\mathcal{W}^{\mrm{out}}(D)$. In order to make the connection with the result of Theorem \ref{thmIsomObstacleSimply}, observe that any compactly supported function $f \in L^2(D)$ belongs to $\mathring{\mathcal{W}}^{2}_{\beta}(D)^{\ast}$ while the solution $u \in \mathcal{W}^{\mrm{out}}(D)$ belongs to $\mH^2_{\rm loc}(D)$ and satisfies the radiation conditions. Notice also that the equality $\ker\,\mathscr{B}^{\mrm{out}}=\ker\,\mathscr{B}_{-\beta}$ is the equivalent of the result established in Remark \ref{RmqTrappedModes}. It says that the elements of the kernel of the problem, if they exist, are exponentially decaying at infinity. In other words, they are trapped modes. 
\end{remark}

\noindent \textit{Proof of Theorem \ref{thmIsomObstacle}.} 1) \textit{i)} First we show that $\ker\,\mathscr{B}^{\mrm{out}}=\ker\,\mathscr{B}_{-\beta}$. Clearly we have 
\[
\ker\,\mathscr{B}_{-\beta}\subset\ker\,\mathscr{B}^{\mrm{out}}.
\]
It is then sufficient to establish that $\ker\,\mathscr{B}^{\mrm{out}}\subset\ker\,\mathscr{B}_{-\beta}$. Assume that 
\[
u=\chi^+\dsp\sum_{p=1}^{P} a_p\, w^{+}_{p} + \chi^-\sum_{p=1}^{P} b_p\, w^{-}_{p} + \tilde{u}
\]
belongs to $\ker\,\mathscr{B}^{\mrm{out}}$. Then one has $q_{D}(u,u)=0$. Using Proposition \ref{PropositionFluxNRJObstacle}, this implies
\[
i\sum_{p=1}^{P} |a_p|^2+i\sum_{p=1}^{P} |b_p|^2=0
\]
and shows that $u\in\ker\,\mathscr{B}_{-\beta}$.\\ 
\newline
\noindent \textit{ii)} Now, let us prove that $\mathscr{B}^{\mrm{out}}$ has a closed range and that $\dim\,\coker\,\mathscr{B}^{\mrm{out}}=\dim\,\ker\, \mathscr{B}_{-\beta}$. Theorem \ref{TheoremDecompoKernel} ensures that $\mathscr{B}_{-\beta}$ is a Fredholm operator. Therefore $\ker\,\mathscr{B}_{-\beta}$ is of finite dimension. Assume that $\ker\,\mathscr{B}_{-\beta}=\mrm{span}(z_1,\dots,z_d)$ where the functions $z_1,\dots,z_d$ are linearly independent. The case $\ker\, \mathscr{B}_{-\beta}=\{0\}$, simpler to study, is left to the reader. Consider some $f\in\mathring{\mathcal{W}}^{2}_{\beta}(D)^{\ast} $ satisfying the compatibility conditions $\langle f,\overline{z_j}\rangle_{D}=0$ for $j=1,\dots,d$. This is equivalent to $f\in (\ker\,\mathscr{B}_{-\beta})^\perp$. Since $\mathscr{B}_{\beta}=\mathscr{B}_{-\beta}^{*}$ and since the range of $\mathscr{B}_{\beta}$ is closed (because $\mathscr{B}_{\beta}$ is of Fredholm type), this is also equivalent to the fact that $f$ belongs to the range of $\mathscr{B}_{\beta}$. Then there is some $v\in\mathring{\mathcal{W}}^{2}_{\beta}(D)$ such that $\mathscr{B}_{\beta}v=f$. Moreover, multiplying $v$ by a well suited cut-off function and using Proposition \ref{propoDecomposition}, one obtains that $v$ admits the decomposition 
\[
v = \chi^+\sum_{p=1}^{P} (a^+_p w^{+}_{p}+a^{-}_p w^{-}_{p}) + \chi^-\sum_{p=1}^{P} (b^{-}_p w^{-}_{p}+b^{+}_p w^{+}_{p}) + \tilde{u}, 
\]
with coefficients $a^{\pm}_p$, $b^{\pm}_p\in\Cplx$ and $\tilde{u}\in \mathring{\mathcal{W}}^2_{-\beta}(D)$. Set 
\[
u:=v-\sum_{p=1}^{P} a^-_p \Psi^{\phantom{+}}_{p}-\sum_{p=1}^{P} b^+_p \Psi^{\phantom{+}}_{P+p}.
\] 
One observes that $u$ belongs to the space $\mathcal{W}^{\mrm{out}}(D)$. Besides, since the $\Psi_p$ are in $\ker\,\mathscr{B}_{\beta}$, we obtain $\mathscr{B}^{\mrm{out}}u=\mathscr{B}_{\beta}v=f$. This shows on the one hand that the range of $\mathscr{B}_{\beta}$ is included in the one of $\mathscr{B}^{\mrm{out}}$. Since $\mathcal{W}^{\mrm{out}}(D)\subset \mathring{\mathcal{W}}^{2}_{\beta}(D)$ the two ranges coincide and then the range of $\mathscr{B}^{\mrm{out}}$ is closed. This shows on the other hand that $\dim\,\coker\, \mathscr{B}^{\mrm{out}}\le d=\dim\,\ker\, \mathscr{B}_{-\beta}$. Now assume by contradiction that $\dim\,\coker\, \mathscr{B}^{\mrm{out}}< d$. In that case, we can find $f=\mathscr{B}^{\mrm{out}}u$ with $u\in\mathcal{W}^{\mrm{out}}(D)$ such that $\langle f,\overline{z_j}\rangle_D\ne0$ for some $j\in\{1,\dots,d\}$. Then we have $\langle \mathscr{B}^{\mrm{out}}u,\overline{z_j}\rangle_D=\langle \mathscr{B}_{\beta}u,\overline{z_j}\rangle_D=\overline{\langle \mathscr{B}_{-\beta}z_j,\overline{u}\rangle_D}=0$ which contradicts the fact that $\langle f,\overline{z_j}\rangle_D\ne0$. Thus there holds $\dim\,\coker\, \mathscr{B}^{\mrm{out}}= d=\dim\,\ker\, \mathscr{B}^{\mrm{out}}$ so that $\mrm{ind}\,\mathscr{B}^{\mrm{out}} = \dim\,\ker\,\mathscr{B}^{\mrm{out}} - 
\dim\,\coker\, \mathscr{B}^{\mrm{out}}=0$.\\
\newline
Finally, we show statement 2). For $f\in\mathring{\mathcal{W}}^{2}_{\beta}(D)^{\ast}$ (satisfying the compatibility conditions if $\ker\, \mathscr{B}_{-\beta}\ne\{0\}$), consider
\[
u=\chi^{+}\sum_{p=1}^{P}c_p\,w^+_p+\chi^{-}\sum_{p=1}^{P}c_{p+P}\,w^-_p+\tilde{u}\in\mathcal{W}^{\mrm{out}}(D)
\]
a solution to the equation $\mathscr{B}^{\mrm{out}}u=f$. Then for $p=1,\dots,2P$, using Theorem \ref{TheoremDecompoKernel}, we find $\langle f,\Psi_p\rangle_D=q_D(u,\overline{\Psi_p})=-i\,c_p$. This leads to the desired result.~\hfill$\square$\\
\newline
We conclude this section by giving the proof of Theorem \ref{TheoremDecompoKernel}.

\begin{figure}[!ht]
\centering
\begin{tikzpicture}[scale=1.2]
\draw[draw=black,line width=1pt,dashed](-5,2)--(-5.7,2);
\draw[draw=black,line width=1pt,dashed](5,2)--(5.7,2);
\draw[draw=black,line width=1pt,dashed](-5,0)--(-5.7,0);
\draw[draw=black,line width=1pt,dashed](5,0)--(5.7,0);
\draw[draw=none,line width=1pt,fill=gray!20](-5,2)--(-1,2)--(-1,0)--(-5,0);
\draw[draw=none,line width=1pt,fill=gray!20](5,2)--(1,2)--(1,0)--(5,0);
\draw[draw=none,line width=1pt,pattern=north west lines, pattern color=black,very thin](-2,2)--(2,2)--(2,0)--(-2,0)--cycle;
\draw[draw=black,line width=1pt](-5,2)--(5,2);
\draw[draw=black,line width=1pt](5,0)--(-5,0);
\begin{scope}[scale=0.2,shift={(2,4)}]
\draw [line width=1pt,fill=white]  plot[smooth, tension=.7] coordinates {(-4,2.5) (-3,3) (-2,2.8) (-0.8,2.5) (-0.5,1.5) (0.5,0) (0,-2)(-1.5,-2.5) (-4,-2) (-3.5,-0.5) (-5,1) (-4,2.5)};
\end{scope}
\node at (-2.6,1.7) {$\om_1$};
\draw [draw=none,fill=white] (-0.25,1.55) rectangle (0.25,1.9);
\node at (0,1.7) {$\om_2$};
\node at (2.6,1.7) {$\om_3$};
\node at (0,0.8) {$\mathscr{O}$};
\node at (1,-0.1) [anchor=north] {$L$};
\node at (2,-0.1) [anchor=north] {$2L$};
\node at (-1,-0.1) [anchor=north] {$-L$};
\node at (-2,-0.1) [anchor=north] {$-2L$};
\end{tikzpicture}\vspace{-0.2cm}
\caption{Partition of unity used in the proof of Theorem \ref{TheoremDecompoKernel}.\label{FigPartition}}
\end{figure}
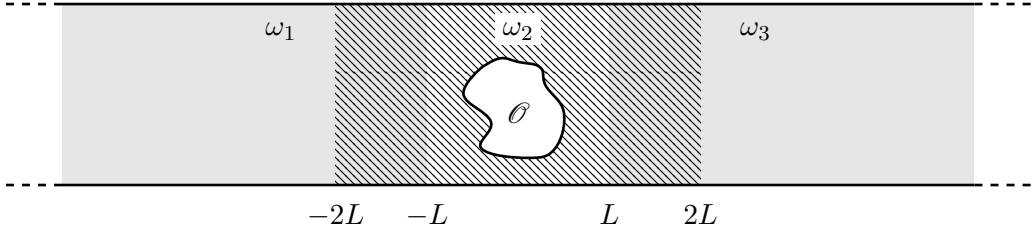

\noindent \textit{Proof of Theorem \ref{TheoremDecompoKernel}.} 
1) First we show that $\mathscr{B}_{\pm\beta}$ are Fredholm operators. Since $\mathscr{B}_{-\beta}$ is the adjoint of $\mathscr{B}_{+\beta}$, it is sufficient to establish the result for $\mathscr{B}_{+\beta}$. 
The strategy is the following. In order to prove that $\mathscr{B}_{\beta}$ is a Fredholm operator, the first step consists in proving
that $\mrm{range}\,\mathscr{B}_{\beta}$ is closed and $\ker\,\mathscr{B}_{\beta}$ is finite-dimensional. This will be a consequence of inequality
(\ref{AprioriEstim}) and Lemma \ref{PeetreLemma} in appendix. The second step consists in proving that $\coker\,\mathscr{B}_{\beta}$ is finite-dimensional, which 
will be a consequence of the existence of a right regularizer of $\mathscr{B}_{\beta}$ and of \cite[Lem. 2.23]{McLe00}.
\\
Define the domains
\[
\om_1:=(-\infty;-L)\times(0;1)\qquad\om_2:=\{(x,y)\in D\,|\,|x|<2L\}\qquad\om_3:=(+L;+\infty)\times(0;1)
\]
(see Figure \ref{FigPartition}). For $\nu=1,\dots,3$, let $\zeta_\nu$ and $\psi_\nu$ be $\mathscr{C}^{\infty}$ functions (with support in $\overline{D}$) satisfying 
the conditions 
\[
\mbox{supp }\zeta_{\nu} \subset \mbox{supp }\psi_{\nu}\subset\omega_{\nu},\quad\qquad\zeta_{\nu}\psi_{\nu}=\zeta_{\nu},\quad\qquad\sum_{\nu=1}^3\zeta_{\nu}=1\mbox{ in }D.
\]
Note in particular that $\zeta_2=\zeta_3=0$ for $x\le-L$, $\zeta_1=\zeta_3=0$ for $|x|\le L$ and $\zeta_1=\zeta_2=0$ for $x\ge L$. Define the space $\mH^2_{\PetitCarre}(\om_2):=\{\varphi\in\mH^2(\om_2)\,|\,\varphi=\partial_n\varphi=0\mbox{ on }\partial\om_2\setminus\partial \mathscr{O}\}$ endowed with the inner product of $\mH^2(\om_2)$. Introduce the unique linear continuous operator $\mathscr{B}_{\PetitCarre}: \mH^2_{\PetitCarre}(\om_2)\to 
\mH^2_{\PetitCarre}(\om_2)^{\ast}$ such that for all $u$, $v\in\mH^2_{\PetitCarre}(\om_2)$, 
\[
\left< \mathscr{B}_{\PetitCarre} u,\overline{v}\right>_{\om_2} = \dsp\int_{\om_2} \nu\Delta u\,\overline{\Delta v}+(1-\nu)\left(\frac{\partial^2u}{\partial x^2}\,\frac{\partial^2\overline{v}}{\partial x^2}+2\frac{\partial^2u}{\partial x\partial y}\,\frac{\partial^2\overline{v}}{\partial x\partial y}+\frac{\partial^2u}{\partial y^2}\,\frac{\partial^2\overline{v}}{\partial y^2}\right)\,dxdy.
\]
According to Lemma \ref{poincare}, we know that for $\nu\in[0;1)$, the operator $\mathscr{B}_{\PetitCarre}$ is an isomorphism.\\

\noindent Let us prove the following \textit{a priori} estimate:
\begin{equation}\label{AprioriEstim}
\Vert u \Vert_{\mathcal{W}^{2}_{\beta}(D)} \le C\,(\Vert  \mathscr{B}_{\beta}u\Vert_{\mathring{\mathcal{W}}^{2}_{-\beta}(D)^{\ast}}+
\Vert u\Vert_{\mH^1(\om_2)}),\quad\forall u\in\mathring{\mathcal{W}}^{2}_{\beta}(D).
\end{equation}
For $u\in\mathring{\mathcal{W}}^{2}_{\beta}(D)$, noticing that $\mathscr{B}_{\beta}(\zeta_1u)=A_{-\beta}(\zeta_1u)$ and $\mathscr{B}_{\beta}(\zeta_3u)=A_{\beta}(\zeta_3u)$ because the supports of $\zeta_1u$, $\zeta_3u$ do not meet $\partial\mathscr{O}$, we can write 
\begin{equation}\label{estimApriori}
\begin{array}{lcl}
\Vert u \Vert_{\mathcal{W}^{2}_{\beta}(D)} & \le & C\,(\Vert \zeta_1 u \Vert_{\mW^{2}_{-\beta}(\Om)}+
\Vert \zeta_2 u \Vert_{\mH^{2}(\omega_2)}+\Vert \zeta_3 u \Vert_{\mW^{2}_{\beta}(\Om)})\\
& \le & C\,(\Vert A_{-\beta}(\zeta_1u)\Vert_{\mW^{2}_{\beta}(\Om)^{\ast}}+
\Vert \mathscr{B}_{\PetitCarre}(\zeta_2 u) \Vert_{\mH^2_{\PetitCarre}(\om_2)^{\ast}}+\Vert A_{\beta}(\zeta_3 u) \Vert_{\mW^{2}_{-\beta}(\Om)^{\ast}})\\
& \le & C\,(\Vert \mathscr{B}_{\beta}(\zeta_1u)\Vert_{\mathring{\mathcal{W}}^{2}_{-\beta}(D)^{\ast}}+
\Vert \mathscr{B}_{\beta}(\zeta_2 u) \Vert_{\mathring{\mathcal{W}}^{2}_{-\beta}(D)^{\ast}}+\Vert \mathscr{B}_{\beta}(\zeta_3 u) \Vert_{\mathring{\mathcal{W}}^{2}_{-\beta}(D)^{\ast}})\\
 & \le & C\,(\sum_{j=1}^3 \Vert \zeta_j \mathscr{B}_{\beta}u\Vert_{\mathring{\mathcal{W}}^{2}_{-\beta}(D)^{\ast}}+\Vert [\mathscr{B}_{\beta} ,\zeta_j]u\|_{\mathring{\mathcal{W}}^{2}_{-\beta}(D)^{\ast}})\\[3pt]
  & \le & C\,(\Vert \mathscr{B}_{\beta}u\Vert_{\mathring{\mathcal{W}}^{2}_{-\beta}(D)^{\ast}}+\sum_{j=1}^3 \Vert [\mathscr{B}_{\beta} ,\zeta_j]u\|_{\mathring{\mathcal{W}}^{2}_{-\beta}(D)^{\ast}}).
\end{array}
\end{equation}
Here we use the notation $[\mathscr{B}_{\beta} ,\zeta_j]u= \mathscr{B}_{\beta}(\zeta_j u)-\zeta_j \mathscr{B}_{\beta}u$. Now, let us establish the estimate  
\begin{equation}\label{estimInterCommu}
\|[\mathscr{B}_{\beta} ,\zeta_1]u\|_{\mathring{\mathcal{W}}^{2}_{-\beta}(D)^{\ast}} \le C\|u\|_{\mH^1(\om_2)}.
\end{equation}
An algebraic computation using the fact that the support of $\zeta_1$ does not meet $\partial\mathscr{O}$ shows that for $\phi\in\{\phi|_{D}\,|\,\phi\in\mathscr{C}^{\infty}_0(\Om)\}$, we have
\[
\langle [\mathscr{B}_{\beta} ,\zeta_1]u,\phi\rangle_{D}=\int_{D}(u\Delta\zeta_1+2\nabla u\cdot\nabla\zeta_1)\Delta \phi-\Delta u(\phi\Delta\zeta_1+2\nabla \phi\cdot\nabla\zeta_1)\,dxdy.
\]
Integrating by parts in the term involving $\Delta u$, we obtain $|\langle [\mathscr{B}_{\beta} ,\zeta_1]u,\phi\rangle_{D}|\le C\|u\|_{\mH^1(\om_2)}\|\phi\|_{\mathcal{W}^{2}_{-\beta}(D)}$ where $C>0$ is independent of $u$. Taking the suprememum over $\{\phi|_{D}\,|\,\phi\in\mathscr{C}^{\infty}_0(\Om)\}$ leads to (\ref{estimInterCommu}). Dealing with the terms $[\mathscr{B}_{\beta} ,\zeta_2]u$ and $[\mathscr{B}_{\beta} ,\zeta_3]u$ of (\ref{estimApriori}) in a similar manner, we obtain the \textit{a priori} estimate (\ref{AprioriEstim}). Finally, observing that the map $u \mapsto u|_{\om_2}$ from $\mathring{\mathcal{W}}^{2}_{\beta}(D)$ to $\mH^1(\om_2)$ 
is compact (because $\om_2$ is bounded), one deduces from Lemma \ref{PeetreLemma} in Appendix that $\mrm{range}\,\mathscr{B}_{\beta}$ is closed and $\ker\,\mathscr{B}_{\beta}$ has finite dimension.\\

\noindent Now, let us build a right regularizer (also called a right parametrix), \textit{i.e.} an 
operator $\mrm{R}$ such that $ \mathscr{B}_{\beta}\mrm{R}-\mrm{Id}$ is a compact operator of 
$\mathring{\mathcal{W}}^{2}_{-\beta}(D)^{\ast}$. According to \cite[Lem. 2.23]{McLe00}, this will 
prove that $\coker\,\mathscr{B}_{\beta}$ is finite-dimensional. Define the operator
\[
\mrm{R}:=\zeta_1\,(A_{-\beta})^{-1}\,(\psi_1\,\cdot)+\zeta_2\,(\mathscr{B}_{\PetitCarre})^{-1}\,(\psi_2\,\cdot)+\zeta_3\,(A_{\beta})^{-1}\,(\psi_3\,\cdot).
\]
For all $f\in\mathring{\mathcal{W}}^{2}_{-\beta}(D)^{\ast}$, one finds
\[
\begin{array}{lcl}
 \mathscr{B}_{\beta}(\mrm{R}f) & = &  \mathscr{B}_{\beta}\,(\zeta_1\,(A_{-\beta})^{-1}\,(\psi_1f))+\mathscr{B}_{\beta}\,(\zeta_2\,(\mathscr{B}_{\PetitCarre})^{-1}\,(\psi_2f))+\mathscr{B}_{\beta}\,(\zeta_3\,(A_{\beta})^{-1}\,(\psi_3f))\\[3pt]
& = & A_{-\beta}\,(\zeta_1\,(A_{-\beta})^{-1}\,(\psi_1f))+\mathscr{B}_{\PetitCarre}\,(\zeta_2\,(\mathscr{B}_{\PetitCarre})^{-1}\,(\psi_2f))+A_{\beta}\,(\zeta_3\,(A_{\beta})^{-1}\,(\psi_3f))\\[3pt]
 & = & \sum_{j=1}^3\zeta_jf+[A_{-\beta},\zeta_1]\,(A_{-\beta})^{-1}(\psi_1f)+[ \mathscr{B}_{\PetitCarre},\zeta_2]\,(\mathscr{B}_{\PetitCarre})^{-1}(\psi_2f)+[A_{\beta},\zeta_3]\,(A_{\beta})^{-1}(\psi_3f).
\end{array}
\]
One can prove working as in (\ref{estimInterCommu}) that 
$[A_{-\beta},\zeta_1]$, $[ \mathscr{B}_{\PetitCarre},\zeta_2]$ and $[A_{\beta},\zeta_3]$ are compact as operators from $\mathring{\mathcal{W}}^{2}_{\beta}(D)$
to $\mathring{\mathcal{W}}^{2}_{-\beta}(D)^{\ast}$. Thus, $\mrm{R}$ 
is indeed a right regularizer and $\coker\,\mathscr{B}_{\beta}$ is finite-dimensional. This concludes the proof that $\mathscr{B}_{\beta}$ is a Fredholm operator.\\
\newline
2) Now, we focus our attention on the indices of $\mathscr{B}_{\pm\beta}$. Since $\mathscr{B}_{\beta}$ is the adjoint of $\mathscr{B}_{-\beta}$, we have 
\begin{equation}
\ind\,\mathscr{B}_{\beta}=-\ind\,\mathscr{B}_{-\beta}.
\label{R1}
\end{equation}
On the other hand, \cite[Chap. 4, Prop. 3.1, p. 110]{NaPl94} guarantees that the quantity $\ind\,\mathscr{B}_{\beta}-\ind\,\mathscr{B}_{-\beta}$ is equal to twice (because of the two outlets at infinity) the sum of the algebraic multiplicities of the eigenvalues of $\mathscr{L}$ located in the strip $-\beta<\lambda<\beta$. From Propositions \ref{propoCard}, \ref{PropositionAlgMult} and  \ref{PropositionSpectrumSymbol}, we infer that 
\begin{equation}
\ind\,\mathscr{B}_{\beta}-\ind\,\mathscr{B}_{-\beta}=4P. 
\label{R2}
\end{equation}
Gathering (\ref{R1}) and (\ref{R2}), we obtain $\ind\,\mathscr{B}_{\beta}=-\ind\,\mathscr{B}_{-\beta}=2P$. In particular, since $\dim\,\coker\,\mathscr{B}_{\beta}=\dim\,\ker\,\mathscr{B}_{-\beta}$ (again we use the fact that $\mathscr{B}^{\ast}_{\beta}=\mathscr{B}_{-\beta}$), we get $\dim\,\ker\,\mathscr{B}_{\beta}-\dim\,\ker\,\mathscr{B}_{-\beta}=2P$. Let $v_1,\dots,v_{2P}$ be functions of $\ker\,\mathscr{B}_{\beta}$ which are linearly independent modulo $\mathring{\mathcal{W}}^{2}_{-\beta}(D)$. Moreover multiplying the $v_p$ by cut-off functions and using Proposition \ref{propoDecomposition}, one finds  for $p=1,\dots,P$, the function $v_p$ decomposes as 
\[
\begin{array}{lcl}
v_p &=& \dsp\chi^+\sum_{m=1}^{P} (a_{p\,m}\,w^{+}_{m}+b_{p\,m}\,w^{-}_{m}) + \chi^-\sum_{m=1}^{P} (a_{p\,P+m}\,w^{-}_{m}+b_{p\,P+m}\,w^{+}_{m}) + \tilde{v}_p, \\[15pt]
v_{P+p} &=& \dsp\chi^+\sum_{m=1}^{P} (a_{P+p\,m}\,w^{+}_{m}+b_{P+p\,m}\,w^{-}_{m}) + \chi^-\sum_{m=1}^{P} (a_{P+p\,P+m}\,w^{-}_{m}+b_{P+p\,P+m}\,w^{+}_{m}) + \tilde{v}_{P+p}, 
\end{array}
\]
with coefficients $a_{p\,m}$, $b_{p\,m}\in\Cplx$ and $\tilde{v}_p\in \mathring{\mathcal{W}}^2_{-\beta}(D)$. Let us show that the matrix $\mathbb{B}:=(b_{p\,m})_{1\le m,p\le 2P}\in\Cplx^{2P\times2P}$ is invertible. If it is not, then there is a non zero $U$ in $\ker\,\mathbb{B}^{\top}$. Define the function $v=\sum_{p=1}^{2P}U_p\,v_p\in\ker\,\mathscr{B}_{\beta}$. Since $\sum_{m=1}^{2P}b_{p\,m}\,U_p=0$ for $m=1,\dots,2P$, we find that $v$ belongs to $\mathcal{W}^{\mrm{out}}(D)$.
 Computing $q_{D}(v,v)$ as in the Step 2 of the proof of Theorem \ref{thmIsomObstacle}, one deduces that $v\in\mathring{\mathcal{W}}^2_{-\beta}(D)$ and so $v\in\ker\,\mathscr{B}_{-\beta}$. But this is impossible because $U\ne0$ and the $v_1,\dots,v_{2P}$ are linearly independent modulo $\mathring{\mathcal{W}}^{2}_{-\beta}(D)$. Thus the matrix $\mathbb{B}$ is  invertible of inverse $\mathbb{B}^{-1}:=(\hat{b}_{p\,j})_{1\le p,j\le 2P}$. Then we construct the functions $\Psi_p$ introduced in (\ref{DefElemNoyau}) setting $\Psi_p=\sum_{j=1}^{2P}\hat{b}_{p\,j}v_j$, $p=1,\dots,2P$.\\
\newline
Besides, using Proposition \ref{PropositionFluxNRJObstacle} and the fact that the $\Psi_p$ defined in (\ref{DefElemNoyau}) belong to $\ker\,\mathscr{B}_{\beta}$, we find for $m$, $p\in\{1,\dots,2P\}$
\[
0=q_D(\Psi_m,\Psi_p)=i\Big(\delta_{m,\,p}-\sum_{j=1}^{2P}s_{m\,j}\,\overline{s_{j\,p}}\Big).
\]
Thus we deduce $\mathbb{S}\,\overline{\mathbb{S}}^{\top}=\mrm{Id}^{2P\times2P}$. In other words, $\mathbb{S}$ is unitary. Using again Proposition \ref{PropositionFluxNRJObstacle}, we also find, for $m\ne p$,
\[
0=q_D(\Psi_m,\overline{\Psi_p})=i(s_{m\,p}-s_{p\,m}).
\]
From this, we infer that $\mathbb{S}$ is symmetric. 
~\hfill$\square$

\section{Selection of the outgoing modes}\label{SectionOutgoing}
\label{selection}
For each type of boundary conditions on the edges of the strip (simply supported or clamped), we defined the outgoing solution, the interesting one from a physical point of view, as the solution decomposing on the propagating modes $w^{\pm}_p$ (and not $w^{\mp}_p$), $p=1,\dots,P$, as $x\to\pm\infty$. This choice was arbitrary. In particular, let us mention that a functional framework where we impose to the solution to decompose on the propagating modes $w^{\mp}_p$, $p=1,\dots,P$, as $x\to\pm\infty$ also leads to a Fredholm operator of index zero. In this section, we explain why our choice is physically relevant in the case of the clamped strip (the case of the simply supported strip would be treated similarly). To proceed, we come back to the time dependent equation from which the harmonic Problem (\ref{PbInitial}) has been derived. We prove that the waves associated with the propagating modes $w^{\pm}_p$ have a positive group velocity as $x\to\pm\infty$. In other words, these waves propagate energy to $\pm\infty$. Positive (resp. negative) group velocity is known as the usual criterion to discriminate what are the outgoing modes as $x\to+\infty$ (resp. $x\to-\infty$). In order to justify that this choice is pertinent, in a second step we prove that it leads to select the solution with satisfies the so-called limiting absorption principle. The idea of this limiting absorption principle consists in adding some small loss (dissipation) to the medium. In this case, we can establish that Problem (\ref{PbInitial}) (with $k$ replaced by a complex $k$ to take into account dissipation) admits a unique solution in $\mathring{\mathcal{W}}^2_{0}(\Om)\subset\mH^2(\Om)$ (to simplify, we shall work in $\Om$ but everything is similar for Problem (\ref{PbObstacle}) in $D$). This solution decomposes as the sum of a slowly exponentially decaying part plus a rapidly exponentially decaying component as $x\to\pm\infty$. The decay of the slowly exponentially decaying part is characterized by the position in the complex plane of the eigenvalues of the symbol of the operator with dissipation. What we will do is to study the limit of this complex eigenvalues to check that they converge to the ones which have been selected for the problem without absorption (the $i\eta_p$, $p=1,\dots,P$).

\subsection{Group velocities} 

In this paragraph, we compute the group velocities of the waves associated to the propagating modes $w^{\pm}_p$, $p=1,\dots,P$, defined in (\ref{DefModesPropa}). Let us start from the equation of the motion of the plate given by 
\begin{equation}\label{eqnTemporelle}
\rho h\cfrac{\partial^2 W}{\partial t^2}+D\Delta^2W=0\qquad\Leftrightarrow\qquad \cfrac{\partial^2 W}{\partial t^2}+c^2\Delta^2 W=0\quad\mbox{ with }\quad c:=\sqrt{\frac{D}{\rho h}}.
\end{equation}
Looking for waves with the time harmonic dependence in $e^{-i\om t}$ (this is a convention) leads us to set $W_p^{\pm}(x,y,t):=w^{\pm}_p(x,y)\,e^{-i\om t}=e^{i(\pm\eta_p x-\om t)}\varphi_p(y)$. Plugging $W_p^{\pm}$ in (\ref{eqnTemporelle}), we obtain the already known equation for $w^{\pm}_p$ :
\[
-\om^2 w^{\pm}_p+c^2\Delta^2w^{\pm}_p=0\qquad\Leftrightarrow\qquad \Delta^2w^{\pm}_p-k^4w^{\pm}_p=0\quad\mbox{ with }\quad k^2:=\om/c.
\]
By definition, the group velocity of the waves $W_p^{\pm}(x,y,t)=e^{i(\eta x-\om t)}\varphi_p(y)$ with $\eta=\pm\eta_p$, is given by
\begin{equation}\label{VGroupe1}
v_g(W_p^{\pm})=\frac{\partial \om}{\partial\eta}|_{\eta=\pm\eta_p}=2c k\frac{\partial k}{\partial\eta}|_{\eta=\pm\eta_p}.
\end{equation}
In order to compute $v_g(W_p^{\pm})$, we differentiate the relation $\mathscr{L}(i\eta)=\partial^4_y\varphi-2\eta^2\partial^2_y\varphi+(\eta^4-k^4)\varphi=0$ (see the definition (\ref{defOperator}) of operator $\mathscr{L}$) with respect to $\eta\in\R$ to obtain 
\[
\mathscr{L}(i\eta)(\frac{\partial\varphi}{\partial \eta})-4\eta(\partial^2_y\varphi-\eta^2\varphi)-4k^3\frac{\partial k}{\partial\eta}\varphi=0.
\]
Taking $\eta=\pm \eta_p$, multiplying by $\overline{\varphi}$ and integrating by parts, we deduce that
\begin{equation}\label{VGroupe2}
\pm4\eta_p\int_{I}|\partial_y\varphi_p(y)|^2+\eta_p^2|\varphi_p(y)|^2\,dy=4k^3\frac{\partial k}{\partial\eta}|_{\eta=\pm\eta_p}\int_{I}|\varphi_p(y)|^2\,dy.
\end{equation}
Gathering (\ref{VGroupe1}), (\ref{VGroupe2}) and using the normalisation (\ref{relationNormalization}), we find 
\[
v_g(W_p^{\pm})=\pm 2c\,\cfrac{\dsp4\eta_p\int_{I}|\partial_y\varphi_p(y)|^2+\eta_p^2|\varphi_p(y)|^2\,dy}{4k^2\dsp\int_{I}|\varphi_p(y)|^2\,dy} =\cfrac{\pm 2c}{4k^2\dsp\int_{I}|\varphi_p(y)|^2\,dy}.
\]
This shows that from a physical point of view, $W_p^{+}$ (resp. $W_p^{-}$) is the outgoing wave as $x\to+\infty$ (resp. $x\to-\infty$). As a consequence, in time harmonic regime, we have to look for a solution which decomposes on the propagating modes $w_p^{\pm}$ as $x\to\pm\infty$. This explains our choice in (\ref{defOpFredholm}). Let us translate this into a criterion for the symplectic form $q_{\Om}(\cdot,\cdot)$ defined in (\ref{FormSymp1}). For $\nu=\pm$, computation (\ref{calculFlux}) gives 
\[
iq_{\Om}(\chi^{\nu}w^{\pm}_p,\chi^{\nu}w^{\pm}_p)=\pm 4\,\nu\eta_p\int_{I}|\partial_y\varphi_p(y)|^2+\eta_p^2|\varphi_p(y)|^2\,dy=\nu v_g(W_p^{\pm})\,\cfrac{4k^2\dsp\int_{I}|\varphi_p(y)|^2\,dy}{2c}\ .
\]
Therefore at infinity ($x\to\pm\infty$), we have to select the propagating modes providing a positive value for the form $iq_{\Om}(w_p,w_p)$. Finally, note that if we define the phase velocity
\[
v_{\phi}(W_p^{\pm})=\frac{\om}{\pm\eta_p},
\]
we obtain the identities
\[
v_{\phi}(W_p^{\pm})\,v_{g}(W_p^{\pm})=2c^2\cfrac{\dsp4\eta_p\int_{I}|\partial_y\varphi_p(y)|^2+\eta_p^2|\varphi_p(y)|^2\,dy}{4\dsp\int_{I}|\varphi_p(y)|^2\,dy}=\cfrac{ 2c^2}{4\dsp\int_{I}|\varphi_p(y)|^2\,dy}\ .
\]
In particular for this problem, we see that the group and phase velocities of the waves have the same sign.
\subsection{Limiting absorption principle}
In this paragraph, we add small dissipation, modelled by a parameter $\gamma$, to the system. This dissipation ensures that the problem is well-posed in a usual setting. Then we make $\gamma$ tend to zero. This process allows us to define the physical solution and we will show that this solution is the same as the one selected via the group velocity. As a first step, we have to explain how to take into account dissipation in the time harmonic Problem (\ref{PbInitial}). To proceed, again we start from the time dependent problem. Consider the damped equation
\begin{equation}\label{DampedEquation}
\cfrac{\partial^2 W^{\gamma}}{\partial t^2}+\gamma\cfrac{\partial W^{\gamma}}{\partial t}+c^2\Delta^2 W^{\gamma}=0\ \mbox{ in }\Om
\end{equation}
with the boundary conditions $W^{\gamma}=\partial_nW^{\gamma}=0$ on $(0;+\infty)\times\partial\Om$ and appropriate initial conditions (compactly supported in space). Multiplying by $\partial_tW^{\gamma}$ and integrating in $\Om$, we obtain the energy balance
\[
\frac{d E(t)}{dt}=-\int_{\Om}\gamma\,\left(\cfrac{\partial W^{\gamma}}{\partial t}\right)^2\,dxdy\qquad\mbox{ with }\qquad E(t)=\int_{\Om}\cfrac{1}{2}\left(\cfrac{\partial W^{\gamma}}{\partial t}\right)^2+c^2\,|\Delta W^{\gamma}|^2\,dxdy.
\]
Therefore, we see that $\gamma$ must be chosen positive so that the term involving $\gamma$ in (\ref{DampedEquation}) corresponds to some dissipation (loss of energy). Applying the Fourier transform with respect to the time variable defined by 
\[
w^{\gamma}(x,y,\om):=\int_{-\infty}^{+\infty}e^{-i\om t}\,W^{\gamma}(x,y,t)\,dt
\]
(note the convention of a time harmonic regime in $e^{-i\om t}$), we are led to study the equation 
\begin{equation}\label{ProblemDissipation}
-\om^2 w^{\gamma}-i\gamma\,w^{\gamma}+c^2\Delta^2w^{\gamma}=0\qquad\Leftrightarrow\qquad \Delta^2w^{\gamma}-(k^{\gamma})^4w^{\gamma}=0\quad\mbox{ with }\quad (k^{\gamma})^4:=k^4+i\gamma/c^2.
\end{equation}
As a consequence, taking into account dissipation of the system boils down to add a positive imaginary part to $k^4$ (observe that a convention of a time harmonic regime in $e^{i\om t}$ leads to add a negative imaginary part to $k^4$). Due to the imaginary part of $k^\gamma$, using the Lax-Milgram theorem, one can prove that Problem (\ref{ProblemDissipation}) supplemented with the same boundary conditions as in (\ref{PbObstacle}) admits a unique solution $w^{\gamma}\in\mathring{\mathcal{W}}^2_{0}(\Om)\subset\mH^2(\Om)$. On the other hand, all the analysis presented in the previous sections for Problem (\ref{PbInitial}) can be adapted to consider this new problem with $k$ replaced by $k^{\gamma}$. In particular, as in (\ref{defSymbol}), we can define a symbol associated with this problem named $\mathscr{L}^{\gamma}$. We let $\Lambda^{\gamma}$ refer to the set of eigenvalues of $\mathscr{L}^{\gamma}$. We denote $i\eta_p^{\gamma}$, $p=1,\dots,P$, the $P$ elements of $\Lambda^{\gamma}$ which have the largest negative real part. These elements are uniquely defined for $\gamma$ small enough and get closer and closer to 
$\R i$ as $\gamma\to0^+$. We assume that they are ordered so that $0\le |\eta_1^{\gamma}|\le\dots\le|\eta_P^{\gamma}|$. Using a result similar to the one of Proposition \ref{propoDecomposition}, we obtain the decomposition
\[
w^{\gamma}(x,y)=\chi^+(x,y)\dsp\sum_{p=1}^{P} a^{\gamma}_p\, e^{+ i\eta^{\gamma}_p x}\varphi^{\gamma}_p(y) + \chi^-(x,y)\sum_{p=1}^{P} b^{\gamma}_p\, e^{- i\eta^{\gamma}_p x}\varphi^{\gamma}_p(y) + \tilde{w}^{\gamma}(x,y)
\]
where $a^{\gamma}_p$, $b^{\gamma}_p$ are complex numbers, $\varphi^{\gamma}_p$ is a non zero element of $\ker\,\mathscr{L}^{\gamma}(i\eta^{\gamma}_p)$ and $\tilde{w}^{\gamma}$ is rapidly exponentially decaying as $x\to\pm\infty$. Note that $w^{\gamma}$ is exponentially decaying at $x\to\pm\infty$ because $\Re e\,(i\eta^{\gamma}_p)<0$. The question we are interested in is as follows. Does the limit of $i\eta_p^{\gamma}$ as $\gamma\to0^+$ is equal to $i\eta_p$ or to $-i\eta_p$? To answer this question, we compute
\begin{equation}\label{VGroupe1bis}
\cfrac{\partial (i\eta_p^{\gamma})}{\partial\gamma}|_{\gamma=0}=-\cfrac{1}{4c^2k^3}\,\cfrac{\partial\eta_p^{\gamma}}{\partial k^\gamma}|_{\gamma=0}.
\end{equation}
We start from the equation 
\[
\partial^4_y\varphi-2\eta^2\partial^2_y\varphi+(\eta^4-k^4)\varphi=0.
\]
We differentiate it with respect to $k$ to obtain
\[
\mathscr{L}(i\eta)(\frac{\partial\varphi}{\partial k})-4\frac{\partial \eta}{dk}\eta\,(\partial^2_y\varphi-\eta^2\varphi)-4k^3\varphi=0.
\]
Taking $\gamma=0$, multiplying by $\overline{\varphi}$ and integrating by parts, we deduce by denoting $\eta^{0}_p=\lim_{\gamma\to0^+}\eta_p^{\gamma}$ that
\begin{equation}\label{VGroupe2bis}
4\eta^{0}_p\cfrac{\partial\eta_p^{\gamma}}{\partial k^\gamma}|_{\gamma=0}\int_{I}|\partial_y\varphi_p(y)|^2+\eta_p^2|\varphi_p(y)|^2\,dy=4k^3\int_{I}|\varphi_p(y)|^2\,dy.
\end{equation}
Since by definition of $\eta_p^{\gamma}$, there holds $\cfrac{\partial(i\eta_p^{\gamma})}{\partial \gamma}|_{\gamma=0}<0$, we deduce from (\ref{VGroupe1bis}) and (\ref{VGroupe2bis}) that $\eta^{0}_p>0$. Therefore indeed  we have $\eta^{0}_p=\eta_p$ and the picture is as illustrated in Figure \ref{FigureLimitingAbsorptionPrinciple}: the limiting absorption principle leads us to call outgoing modes the modes $w^{\pm}_p$ as $x\to\pm\infty$. This is coherent with what we got from considerations based on the group velocity in the previous paragraph.

\begin{figure}[!ht]
\centering
\begin{tikzpicture}
\draw[draw=black,line width=1pt,->](-4,0)--(4.2,0);
\draw[draw=black,line width=1pt,->](0,-1.5)--(0,2);
\filldraw [red,draw=none] (0,1.2) circle (0.1);
\filldraw [red,draw=none] (0,-1.2) circle (0.1);
\draw (-0.15,1.19) node[cross] {};
\draw (0.15,-1.19) node[cross] {};
\draw (-0.4,1.16) node[cross] {};
\draw (0.4,-1.16) node[cross] {};
\draw (-0.7,1.09) node[cross] {};
\draw (0.7,-1.09) node[cross] {};
\draw (-1.1,0.97) node[cross] {};
\draw (1.1,-0.97) node[cross] {};
\node at (0.1,1.2) [red,anchor=west] {$+i\eta_p$};
\node at (-0.1,-1.2) [red,anchor=east] {$-i\eta_p$};
\node at (4.4,0) [anchor=west] {$\Re e\,\lambda$};
\node at (0.7,1.7) [anchor=south] {$\Im m\,\lambda$};
\node at (-2,1.5) [anchor=south] {\small \textcolor{blue}{$+i\eta_p^\gamma$ when $\gamma\to0^+$}};
\node at (2,-1.5) [anchor=north] {\small \textcolor{blue}{$-i\eta_p^\gamma$ when $\gamma\to0^+$}};
\draw[blue,dotted,->] (-1.4,1.27) .. controls (-0.85,1.5) .. (-0.35,1.49);
\draw[blue,dotted,->] (1.4,-1.27) .. controls (0.85,-1.5) .. (0.35,-1.49);
\end{tikzpicture}
\caption{Schematic view of the behaviour of the eigenvalues of the symbol $\mathscr{L}^{\gamma}$ as the dissipation $\gamma$ tends to zero.\label{FigureLimitingAbsorptionPrinciple}}
\end{figure}
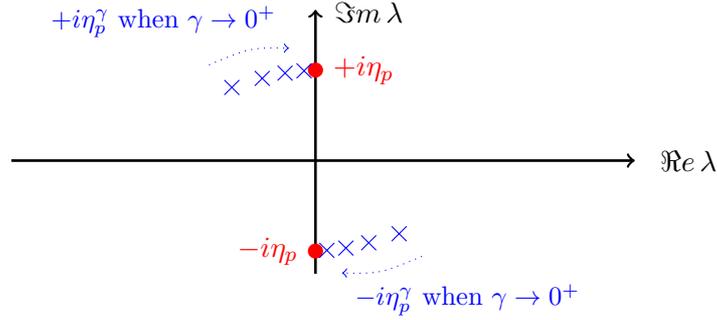
\section{Concluding remarks}\label{Conclusion}
In this article, we proved the well-posedness in the Fredholm sense of time harmonic problems set in an unbounded strip for a thin plate model. We considered two types of boundary conditions: either the strip is simply supported or the strip is clamped. To show these results, we used two different strategies, both relying on some modal decomposition. In the simply supported case, a strong result of Hilbert basis for the eigenfunctions of the symbol of the transverse problem (\ref{SpectralVaria1}) allowed us to easily obtain the modal decomposition. For the clamped problem, this result is not freely available and instead we worked with the Fourier transform in the unbounded direction, with weighted Sobolev spaces and with the residue theorem (Kondratiev approach). The second approach is more systematic than the first one. For example, it would allow one to deal with waveguides of the form $\Om=\R\times\om$, where $\om$ is a bounded domain of $\R^{d-1}$, $d\ge2$. Its main drawback maybe is that it leads to an analysis which is slightly longer than the one we get with the first method. In this work, we have considered a setting with a hole in the waveguide. We could also have considered other types of perturbations (change of material, local perturbation of the geometry,...). Our article does not address the question of uniqueness of the solution. Can one find conditions on the geometry and on the wavenumber $k$ so that trapped modes are absent? Can one show the existence of settings where trapped modes exist? These are interesting questions to study. A natural direction after this analysis would be to investigate how to approximate the solutions. For the simply supported case, the fact that we know explicitly the Dirichlet-to-Neumann operator allows one to adapt the methods used to deal with the Helmholtz problem. In the clamped case, another technique must be found. Could the technique of Perfectly Matched Layers be used and justified?

\section*{Appendix}
In this appendix, we state a result established in \cite{Tart87} which is an extension of the well-known Peetre's lemma \cite{Peet61} (see also \cite{Wlok87}), that is particularly useful to prove that an operator is of Fredholm type.

\begin{lemma}\label{PeetreLemma}
Let $(\mrm{X}, \Vert \;\Vert_{\mrm{X}}),(\mrm{Y}, \Vert \;\Vert_{\mrm{Y}})$ and 
$(\mrm{Z}, \Vert \;\Vert_{\mrm{Z}})$ be three  Banach 
spaces. Let $K: \mrm{X}\to \mrm{Z}$ be a linear compact map and 
 $B: \mrm{X}\to \mrm{Y}$ be a continuous linear map. Suppose that there exists $C>0$ such that 
\begin{equation}
\label{estim Peetre}
\Vert x\Vert_{\mrm{X}}\leq C\big(\;
\Vert B x\Vert_{\mrm{Y}}+\Vert K x\Vert_{\mrm{Z}}\,\big),\;\;\forall x\in\mrm{X}.
\end{equation}
Then 
$\dim\,\ker\,B  <\infty$ and $\mrm{range}\,B$ is closed in $\mrm{Y}$.
\end{lemma}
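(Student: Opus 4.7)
My plan is to establish the two conclusions in turn, both by combining the a priori estimate \eqref{estim Peetre} with the compactness of $K$, following the standard Peetre-type argument.

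First, I would show that $\dim\,\ker\,B<\infty$. Restricting \eqref{estim Peetre} to $\ker\,B$ gives $\|x\|_{\mrm{X}}\le C\|Kx\|_{\mrm{Z}}$ for every $x\in\ker\,B$. For any bounded sequence $(x_n)$ in $\ker\,B$, compactness of $K$ provides a subsequence along which $(Kx_n)$ converges in $\mrm{Z}$, and the estimate applied to differences $x_n-x_m$ (which also belong to $\ker\,B$) shows that the same subsequence is Cauchy in $\mrm{X}$. Hence the closed unit ball of $\ker\,B$ is sequentially compact, and Riesz's theorem forces $\dim\,\ker\,B<\infty$.

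Next, for closedness of $\mrm{range}\,B$, I would take $y_n=Bx_n\to y$ in $\mrm{Y}$ and produce $x\in\mrm{X}$ with $Bx=y$. Because $\ker\,B$ is finite-dimensional, the distance from $x_n$ to $\ker\,B$ is attained by some $k_n\in\ker\,B$; set $\tilde{x}_n:=x_n-k_n$, so that $B\tilde{x}_n=y_n$ and, crucially, $\|\tilde{x}_n\|_{\mrm{X}}=\mathrm{dist}(\tilde{x}_n,\ker\,B)$. The heart of the argument is to prove that $(\tilde{x}_n)$ stays bounded. Suppose not: after extraction, $\|\tilde{x}_n\|_{\mrm{X}}\to+\infty$, and the normalized vectors $u_n:=\tilde{x}_n/\|\tilde{x}_n\|_{\mrm{X}}$ satisfy $\|u_n\|_{\mrm{X}}=1$, $Bu_n\to 0$ in $\mrm{Y}$, and $\mathrm{dist}(u_n,\ker\,B)=1$. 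Compactness of $K$ yields a subsequence along which $(Ku_n)$ converges, and \eqref{estim Peetre} applied to differences then gives that this subsequence is Cauchy, converging to some $u\in\mrm{X}$ with $Bu=0$, i.e.\ $u\in\ker\,B$. This contradicts $\mathrm{dist}(u_n,\ker\,B)=1$, so $(\tilde{x}_n)$ must be bounded. Applying compactness of $K$ once more and invoking \eqref{estim Peetre} on the differences $\tilde{x}_n-\tilde{x}_m$, I extract a convergent subsequence $\tilde{x}_n\to x$, and continuity of $B$ gives $Bx=y$, proving that $\mrm{range}\,B$ is closed.

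The main delicate point I expect is precisely the choice of the minimal-norm representative $\tilde{x}_n\in x_n+\ker\,B$ and the resulting identity $\mathrm{dist}(u_n,\ker\,B)=1$, which drives the contradiction producing boundedness of $(\tilde{x}_n)$. Everything else is a direct and essentially mechanical use of the hypothesis together with sequential compactness of $K$.
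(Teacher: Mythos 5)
Your proof is correct and complete. Note that the paper does not actually prove Lemma \ref{PeetreLemma}: it is stated without proof and credited to an extension of Peetre's lemma established in the cited reference (Tartar). Your argument is precisely the standard proof of that result: on $\ker B$ the estimate \eqref{estim Peetre} collapses to $\|x\|_{\mrm{X}}\le C\|Kx\|_{\mrm{Z}}$, so compactness of $K$ makes the closed unit ball of $\ker B$ sequentially compact and Riesz's theorem gives finite dimension; then, since $\ker B$ is finite-dimensional, a minimal-norm representative $\tilde{x}_n$ of each coset exists, the normalization $u_n=\tilde{x}_n/\|\tilde{x}_n\|_{\mrm{X}}$ together with $\mathrm{dist}(u_n,\ker B)=1$ yields the contradiction proving boundedness, and one more application of compactness of $K$ and \eqref{estim Peetre} closes the range. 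The only points worth being explicit about (and you are) are that the minimum in $\mathrm{dist}(x_n,\ker B)$ is attained because $\ker B$ is finite-dimensional, and that $\mathrm{dist}(u_n,\ker B)=\|\tilde{x}_n\|_{\mrm{X}}/\|\tilde{x}_n\|_{\mrm{X}}=1$ by translation-invariance of the distance to a subspace.
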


\bibliography{Bibli}
\bibliographystyle{plain}

\end{document}